\documentclass[10.5pt,a4paper]{article}

\usepackage{graphicx,latexsym,euscript,makeidx,color,bm}
\usepackage{amsmath,amsfonts,amssymb,amsthm,thmtools,mathrsfs,enumerate}
\usepackage[shortlabels]{enumitem}
\usepackage[colorlinks,linkcolor=blue,anchorcolor=green,citecolor=red]{hyperref}



\usepackage{geometry}
\geometry{left=2.5cm,right=2.5cm,top=3.0cm,bottom=3.0cm}

    \def\BA{{\bm A}} 
    \def\BB{{\bm B}} 
     
   \def\cD{{\cal D}}  
\def\dbE{\mathbb{E}}     
\def\dbF{\mathbb{F}}   \def\cF{{\cal F}}  
     
\def\dbH{\mathbb{H}}

\def\dbL{\mathbb{L}}   \def\cL{{\cal L}}  
    \def\BM{{\bm M}} 
   \def\cN{{\cal N}} \def\BN{{\bm N}} 
     
\def\dbP{\mathbb{P}}     
     
\def\dbR{\mathbb{R}}     
\def\dbS{\mathbb{S}}   \def\cS{{\cal S}}  
     
   \def\cU{{\cal U}} \def\BU{{\bm U}}

\def\dbX{\mathbb{X}}   \def\cX{{\cal X}} \def\BX{{\bm X}} 
\def\dbY{\mathbb{Y}}    \def\BY{{\bm Y}} 
\def\dbZ{\mathbb{Z}}    \def\BZ{{\bm Z}} 

\def\ba{\begin{array}}   \def\ea{\end{array}} \def\bel{\begin{equation}\label}  \def\ee{\end{equation}}

\def\ss{\smallskip}      \def\lt{\left}       \def\hb{\hbox}
\def\ms{\medskip}        \def\rt{\right}      \def\ae{\hbox{\rm a.e.}}
\def\bs{\bigskip}        \def\lan{\langle}    \def\as{\hbox{\rm a.s.}}
\def\ds{\displaystyle}   \def\ran{\rangle}    \def\tr{\hbox{\rm tr$\,$}}
\def\ts{\textstyle}      \def\llan{\lt\lan}   
\def\no{\noindent}       \def\rran{\rt\ran}   
\def\ns{\noalign{\ss}}   \def\blan{\big\lan}  \def\esssup{\mathop{\rm ess\,sup}}
   \def\bran{\big\ran}  \def\essinf{\mathop{\rm ess\,inf}}
\def\rf{\eqref}            
         \def\hp{\hphantom}
\def\deq{\triangleq}     \def\({\Big (}       \def\nn{\nonumber}
\def\les{\leqslant}      \def\){\Big )}       
\def\ges{\geqslant}      \def\[{\Big[}        \def\cl{\overline}
          \def\]{\Big]}        
\def\wt{\widetilde}      \def\q{\quad}        
\def\h{\widehat}         \def\qq{\qquad}      \def\1n{\negthinspace}
\def\cd{\cdot}           \def\2n{\1n\1n}      \def\det{\hbox{\rm det\,}}
\def\cds{\cdots}         \def\3n{\1n\2n}


\def\a{\alpha}        \def\G{\Gamma}      \def\Om{\Omega}  \def\om{\omega}
\def\b{\beta}         \def\D{\Delta}   \def\d{\delta}   \def\F{\Phi}     
         \def\Th{\Theta}  \def\th{\theta}  \def\Si{\Sigma}  \def\si{\sigma}
\def\e{\varepsilon}   \def\L{\Lambda}  \def\l{\lambda}  \def\m{\mu}      
    \def\t{\tau}     \def\f{\varphi}  \def\i{\infty}   

\newtheoremstyle{indented}{}{}{\it}{\parindent}{\bfseries}{.}{.5em}{}
\theoremstyle{indented}

\newtheorem{theorem}{Theorem}[section]
\newtheorem{definition}[theorem]{Definition}
\newtheorem{proposition}[theorem]{Proposition}
\newtheorem{corollary}[theorem]{Corollary}
\newtheorem{lemma}[theorem]{Lemma}
\newtheorem{remark}[theorem]{Remark}
\newtheorem{example}[theorem]{Example}

\makeatletter
   
   \@addtoreset{equation}{section}
   \newcommand{\setword}[2]{%
   \phantomsection
   #1\def\@currentlabel{\unexpanded{#1}}\label{#2}%
   }
\makeatother
\sloppy  \allowdisplaybreaks[4]

\begin{document}

\title{\bf Indefinite Stochastic Linear-Quadratic Optimal Control Problems with Random Coefficients: \\
           Closed-Loop Representation of Open-Loop\\ Optimal Controls}

\author{Jingrui Sun\thanks{Department of Mathematics, Southern University of Science and Technology,
                           Shenzhen, Guangdong, 518055, China (sunjr@sustech.edu.cn).} \and
          Jie Xiong\thanks{Department of Mathematics, Southern University of Science and Technology,
                           Shenzhen, Guangdong, 518055, China (xiongj@sustc.edu.cn).
                           This author is supported by NSFC Grants 61873325 and 11831010, and SUST
                           start-up funds Y01286120 and Y01286220.} \and
      Jiongmin Yong\thanks{Department of Mathematics, University of Central Florida,
                           Orlando, FL 32816, USA. (jiongmin.yong@ucf.edu).
                           This author is supported in part by NSF Grant DMS-1812921.}}

\maketitle

\no{\bf Abstract.}
This paper is concerned with a stochastic linear-quadratic optimal control problem in a finite time horizon, where the coefficients of the control system are allowed to be random, and the weighting matrices in the cost functional are allowed to be random and indefinite. It is shown, with a Hilbert space approach, that for the existence of an open-loop optimal control, the convexity of the cost functional (with respect to the control) is necessary; and the uniform convexity, which is slightly stronger, turns out to be sufficient, which also leads to the unique solvability of the associated stochastic Riccati equation. Further, it is shown that the open-loop optimal control admits a closed-loop representation. In addition, some sufficient conditions are obtained for the uniform convexity of the cost functional, which are strictly general than the classical conditions that the weighting matrix-valued processes are positive (semi-)definite.

\ms

\no{\bf Keywords.} stochastic linear-quadratic optimal control problem, random coefficient, stochastic Riccati equation, value flow, open-loop optimal control, closed-loop representation.

\ms

\no{\bf AMS subject classifications.} 49N10, 49N35, 93E20, 49K45.

\section{Introduction}\label{Sec:Introduction}

Throughout this paper, we let $(\Om,\cF,\dbF,\dbP)$ be a complete filtered probability space on which a standard one-dimensional Brownian motion $W=\{W(t); 0\les t<\i\}$ is defined. We assume that $\dbF=\{\cF_t\}_{t\ges0}$ is the natural filtration of $W$ augmented by all the $\dbP$-null sets in $\cF$. Hence, $\dbF$ automatically satisfies the {\it usual conditions}.

\ms

Consider the following controlled linear stochastic differential equation (SDE, for short) on a finite time horizon:
\bel{state}\left\{\begin{aligned}
   dX(s) &= [A(s)X(s)+B(s)u(s)]ds + [C(s)X(s)+D(s)u(s)]dW(s), \q s\in[t,T],\\
    X(t) &= \xi,
\end{aligned}\right.\ee
where $A,C:[0,T]\times\Om\to\dbR^{n\times n}$ and $B,D:[0,T]\times\Om\to\dbR^{n\times m}$, called the
{\it coefficients} of the {\it state equation} \rf{state}, are given matrix-valued $\dbF$-progressively
measurable processes; and $(t,\xi)$, called an {\it initial pair} (of an initial time and an initial state),
belongs to the following set:
$$ \cD = \big\{(t,\xi) ~|~ t\in\cS[0,T],~\xi\in L^2_{\cF_t}(\Om;\dbR^n)\big\},$$
where $\cS[0,T]$ is the set of all $\dbF$-stopping times valued in $[0,T]$ and
$$ L^2_{\cF_t}(\Om;\dbR^n) = \big\{\xi:\Om\to\dbR^n ~|~ \xi~\hb{is $\cF_t$-measurable with}~\dbE|\xi|^2<\i\big\},\qq t\in[0,T].$$
In the above, the solution $X=\{X(s);t\les s\les T\}$ of \rf{state} is called a {\it state process}; the process $u=\{u(s);t\les s\les T\}$ is called a {\it control} which influences the state $X$, and is taken from the space
\begin{align*}
\cU[t,T] =L^2_\dbF(t,T;\dbR^m)
  &\ts=\Big\{u:[t,T]\times\Om\to\dbR^m\bigm| u~\hb{is $\dbF$-progressively measurable with} \\
  &\ts\hp{=\Big\{u:[t,T]\times\Om\to\dbR^m\bigm|~} \dbE\int_t^T|u(s)|^2ds<\i\Big\}.
\end{align*}
The pair $(X,u)=\{(X(s),u(s));t\les s\les T\}$ is called a {\it state-control pair}
corresponding to the initial pair $(t,\xi)$.
For our state equation \rf{state}, we introduce the following assumption:
\begin{itemize}[leftmargin=0.98cm]
\item[{\bf\setword{(A1)}{(A1)}}] The processes $A,C:[0,T]\times\Om\to\dbR^{n\times n}$ and
$B,D:[0,T]\times\Om\to\dbR^{n\times m}$ are all bounded and $\dbF$-progressively measurable.
\end{itemize}

According to the standard result for SDEs (see \autoref{lmm:sol-F&B-SDE} (i)), under the assumption \ref{(A1)},
for any initial pair $(t,\xi)\in\cD$ and any control $u\in\cU[t,T]$, equation \rf{state} admits a unique solution
$X(\cd)\equiv X(\cd\,;t,\xi,u)$ which has continuous path and is square-integrable.

\medskip

Next we introduce the following random variable associated with the state equation \rf{state}:
\begin{equation}\label{rv-L}
L(t,\xi;u) \deq \lan GX(T),X(T)\ran
                +\int_t^T\llan\1n\begin{pmatrix}Q(s)&\1nS(s)^\top \\ S(s)&\1nR(s)\end{pmatrix}\1n
                                 \begin{pmatrix}X(s) \\ u(s)\end{pmatrix}\1n,
                                 \begin{pmatrix}X(s) \\ u(s)\end{pmatrix}\1n\rran ds,
\end{equation}
where with $\dbS^n$ denoting the set of all symmetric $(n\times n)$ real matrices,
the weighting matrices $G$, $Q$, $S$, and $R$ satisfy the following assumption:
\begin{itemize}[leftmargin=0.98cm]
\item[{\bf\setword{(A2)}{(A2)}}]
The processes $Q:[0,T]\times\Om\to\dbS^n$, $R:[0,T]\times\Om\to\dbS^m$, and $S:[0,T]\times\Om\to\dbR^{m\times n}$
are all bounded and $\dbF$-progressively measurable;
the random variable $G:\Om\to\dbS^n$ is bounded and $\cF_T$-measurable.
\end{itemize}

Under \ref{(A1)}--\ref{(A2)}, the random variable defined by \rf{rv-L} is  integrable,
so the following two functionals are well-defined:
\begin{alignat*}{2}
J(t,\xi;u)    &=\dbE[L(t,\xi;u)];       \q~& (t,\xi)\in\cD,~u\in\cU[t,T], \\
\h J(t,\xi;u) &=\dbE[L(t,\xi;u)|\cF_t]; \q~& (t,\xi)\in\cD,~u\in\cU[t,T].
\end{alignat*}
These two functionals are called the {\it cost functionals} associated with the state equation \rf{state}, which will be used to measure the performance of the control $u\in\cU[t,T]$. Now, the following two problems, called {\it stochastic linear-quadratic optimal control problems} (SLQ problems, for short), can be formulated:

\ms

{\bf Problem (SLQ).} For any given initial pair $(t,\xi)\in\cD$, find a control $u^*\in\cU[t,T]$ such that
\bel{minJ} J(t,\xi;u^*) =\inf_{u\in\cU[t,T]}J(t,\xi;u) \equiv V(t,\xi). \ee

{\bf Problem $\h{\bf(SLQ)}$.} For any given initial pair $(t,\xi)\in\cD$, find a control $u^*\in\cU[t,T]$ such that
\bel{minhJ} \h J(t,\xi;u^*) =\essinf_{u\in\cU[t,T]}\h J(t,\xi;u) \equiv\h V(t,\xi). \ee

\ss

In \rf{minhJ}, $\essinf$ stands for the {\it essential infimum} of a real-valued random variable family. Any element $u^*\in\cU[t,T]$ satisfying \rf{minJ} (respectively, \rf{minhJ}) is called an {\it open-loop optimal control} of Problem (SLQ) (respectively, Problem $\h{\rm(SLQ)}$) corresponding to the initial pair $(t,\xi)\in\cD$;
the corresponding state process $X^*(\cd)\equiv X(\cd\,;t,\xi,u^*)$ is called an {\it open-loop optimal state process};
and the state-control pair $(X^*,u^*)$ is called an {\it open-loop optimal pair} corresponding to $(t,\xi)$.
Since the space $L^2_{\cF_t}(\Om;\dbR^n)$ of initial states becomes larger as the initial time $t$ increases, it is proper to call $(t,\xi)\mapsto V(t,\xi)$ the {\it value flow} of Problem (SLQ) and $(t,\xi)\mapsto\h V(t,\xi)$ the
({\it stochastic}) {\it value flow} of Problem $\h{\rm(SLQ)}$.

\ms

We now introduce the following definition.

\begin{definition}\label{def:ol-opt-cntrl}\rm
Problem (SLQ) (respectively, Problem $\h{\rm(SLQ)}$) is said to be
\begin{enumerate}[\q\,\rm(i)]
\item ({\it uniquely}) {\it open-loop solvable at $(t,\xi)\in\cD$} if there exists a (unique)
      $u^*\in\cU[t,T]$ such that for any $u\in\cU[t,T]$,
      $$J(t,\xi;u^*)\les J(t,\xi;u), \q (\hb{respectively,}~\h J(t,\xi;u^*)\les \h J(t,\xi;u),~\as);$$
\item ({\it uniquely}) {\it open-loop solvable at $t$} if it is (uniquely) open-loop solvable at $(t,\xi)$
      for all $\xi\in L^2_{\cF_t}(\Om;\dbR^n)$;
\item ({\it uniquely}) {\it open-loop solvable on $[0,T]$} if it is (uniquely) open-loop solvable at any $t\in[0,T]$.
\end{enumerate}
\end{definition}

One sees that Problem $\h{\rm(SLQ)}$ is stronger than Problem (SLQ) in the sense that each open-loop optimal control $u^*\in\cU[t,T]$ of Problem $\h{\rm(SLQ)}$ is also an open-loop optimal control of Problem (SLQ). Moreover, one sees that
$$ V(t,\xi)=\dbE[\h V(t,\xi)], \qq\forall (t,\xi)\in\cD. $$
Later, we will further show that if $u^*\in\cU[t,T]$ is an open-loop optimal control of Problem (SLQ),
it is also open-loop optimal for Problem $\h{\rm(SLQ)}$ (see \autoref{thm:equivalence-SLQ12}).
Therefore, these two problems are equivalent.

\ms

The study of SLQ problems was initiated by Wonham \cite{Wonham 1968} in 1968, and was later investigated by many researchers; see, for example, Athens \cite{Athens 1971}, Bismut \cite{Bismut 1976, Bismut 1978},
Davis \cite{Davis 1977}, Bensoussan \cite{Bensoussan 1982} and the references cited therein for most (if not all) major works during 1970--1980s. See also Chapter 6 of the book by Yong and Zhou \cite{Yong-Zhou 1999} for a self-contained presentation. More recent works will be briefly surveyed below.

\ms

For SLQ problems, there are three closely related objects/notions involved: (open-loop) solvability, optimality system which is a
coupled forward-backward stochastic differential equation (FBSDE, for short), and a Riccati equation. It is well known that when the map $u\mapsto J(t,\xi;u)$ is uniformly convex for every $(t,\xi)\in\cD$, which is guaranteed by the following {\it standard condition}:
\bel{classical} G\ges0,\q Q(\cd)\ges0,\q S(\cd)=0,\q R(\cd)\ges\d I_m,~ \hb{~for some~}\d>0,\ee
Problem (SLQ) is uniquely (open-loop) solvable. Then, by a variational method (or Pontryagin's maximum principle), the optimality system (a coupled FBSDE) automatically admits an adapted solution. Applying the idea of invariant imbedding \cite{Bellman-Wing 1975}, an associated Riccati equation can be formally derived, which decouples the coupled FBSDE. Now, if such a Riccati equation admits a solution, by completing squares, an (open-loop) optimal control of state feedback form can be constructed. This then solves Problem (SLQ). The same idea also applies to Problem $\h{\rm(SLQ)}$. We should point out that such a methodology, which could be called the {\it ``uniform convexity-FBSDE-Riccati equation'' approach}, for convenience, is the most natural approach to all LQ problems. For SLQ problems with deterministic coefficients (by which we mean that all the coefficients of the state equation and all the weighting matrices in the cost functional are deterministic), which includes the deterministic LQ problems, the above approach is very successful under the standard condition \rf{classical} (see Yong and Zhou \cite[Chapter 6]{Yong-Zhou 1999}).

\ms

In 1977, Molinari \cite{Molinari 1977} showed that $Q(\cd)\ges0$ is not necessary for the (open-loop) solvability of the deterministic LQ problems (see also You \cite{You 1983} for the LQ problem in Hilbert spaces), and actually, $G\ges0$ is not necessary either, although $R(\cd)\ges0$ is necessary. Furthermore, for SLQ problems, even $R(\cd)\ges0$ is not necessary for the (open-loop) solvability (see the work of Chen, Li and Zhou in 1998 \cite{Chen-Li-Zhou 1998}). Note that our assumptions (A1)--(A2) allow all the coefficients of the state equation \rf{state} and the weighting matrices in \rf{rv-L} to be stochastic processes, and no any positive/nonnegative definiteness conditions imposed on the weighting matrices $G$, $Q(\cd)$, and $R(\cd)$. Because of this, we refer to our Problems (SLQ) and $\h{\rm(SLQ)}$ as {\it indefinite SLQ problems with random coefficients}.
The indefinite SLQ problem not only stands out on its own as an interesting mathematically theoretic problem, but also has promising applications in practical areas. For example, as a special indefinite case, the matrix $R(\cd)$ is inherently zero in the mean-variance
portfolio selection problem \cite{Zhou-Li 2000,Lim-Zhou 2002};
in a pollution control model formulated in \cite{Chen-Li-Zhou 1998}, the matrix $R(\cd)$ is negative definite. The finding of \cite{Chen-Li-Zhou 1998} has triggered extensive research on the indefinite SLQ problem; see, for example, the follow-up works of Lim and Zhou \cite{Lim-Zhou 1999}, Chen and Zhou \cite{Chen-Zhou 2000}, Chen and Yong \cite{Chen-Yong 2000, Chen-Yong 2001}, Ait Rami, Moore, and Zhou \cite{Rami-Moore-Zhou 2002}, as well as the works of Hu and Zhou \cite{Hu-Zhou 2003}, and Qian and Zhou \cite{Qian-Zhou 2013}.

\ms

Without assuming any positive definiteness/semi-definiteness on the weighting matrices brings a great challenge for solving the SLQ problem. For the deterministic coefficient case, the recent results by Sun and Yong \cite{Sun-Yong 2014}, Sun, Li, and Yong \cite{Sun-Li-Yong 2016} are quite satisfactory. Let us briefly present some relevant results here. First of all, we recall the following definition (for SLQ problems with deterministic coefficients).

\begin{definition}\rm
Let $t\in[0,T)$ be a deterministic initial time, and  $L^2(t,T;\dbR^{m\times n})$ be the space of
all $\dbR^{m\times n}$-valued deterministic functions that are square-integrable on $[t,T]$.
A pair $(\Th^*,v^*)\in L^2(t,T;\dbR^{m\times n})\times\cU[t,T]$ is called a {\it closed-loop optimal
strategy} of Problem (SLQ) on $[t,T]$ if for any initial state $\xi\in L^2_{\cF_t}(\Om;\dbR^n)$ and
any $(\Th,v)\in L^2(t,T;\dbR^{m\times n})\times\cU[t,T]$,
\bel{J(Th*)} J(t,\xi;\Th^*X^*+v^*)\les J(t,\xi;\Th X+v), \ee
where $X^*=\{X^*(s);t\les s\les T\}$ is the solution to the following {\it closed-loop system}:
\begin{equation}\label{dX*}\left\{\begin{aligned}
  dX^*(s) &=\big\{[A(s)+B(s)\Th^*(s)]X^*(s) + B(s)v^*(s)\big\}ds\\
          &\hp{=\ } +\big\{[C(s)+D(s)\Th^*(s)]X^*(s) + D(s)v^*(s)\big\}dW(s),\\
   X^*(t) &=\xi,
\end{aligned}\right.\end{equation}
and $X=\{X(s);t\les s\les T\}$ on the right-hand side of \rf{J(Th*)} is the solution to \rf{dX*}
in which $(\Th^*,v^*)$ is replaced by $(\Th,v)$.
When a closed-loop optimal strategy exists on $[t,T]$, we say that Problem (SLQ) is {\it closed-loop solvable}
(on $[t,T]$).
\end{definition}

The point that we want to make here is that the closed-loop optimal strategy $(\Th^*,v^*)$ is independent of the initial state $\xi$. For open-loop and closed-loop solvabilities of Problem (SLQ) with deterministic coefficients, the following results were established in \cite{Sun-Yong 2014, Sun-Li-Yong 2016}.
\begin{itemize}
\item Problem (SLQ) is open-loop solvable at some initial pair $(t,\xi)$ if and only if the mapping $u\mapsto J(t,0;u)$ is convex and the corresponding FBSDE is solvable;
\item Problem (SLQ) is closed-loop solvable on $[t,T]$ if and only if the corresponding Riccati equation admits a regular solution;
\item If Problem (SLQ) is closed-loop solvable on $[0,T]$, then it is open-loop solvable, and every open-loop optimal control admits a closed-loop representation which must coincide with the outcome of an closed-loop optimal strategy.
      %
\end{itemize}

\ms

For the random coefficient case, we will still have the equivalence between the open-loop solvability and the solvability of a certain FBSDE (together with the convexity of the cost functional). However, the Riccati equation associated with Problem (SLQ) becomes a nonlinear BSDE, which is usually referred to as the {\it stochastic Riccati equation} (SRE, for short).
In 2003, Tang \cite{Tang 2003} and Kohlmann--Tang \cite{Kohlmann-Tang 2003} (see also \cite{Tang 2016}) proved that the associated SRE is uniquely solvable
under either the standard condition \rf{classical} or the following condition:
\begin{equation}\label{n=m}
D(\cd)^\top D(\cd)\ges\d I_m \hb{~and~} G\ges\d I_n ~\hb{for some } \d>0,
  \q Q(\cd),R(\cd)\ges0,\q S(\cd)=0,
\end{equation}
and that the corresponding closed-loop system is well-posed.
We mention that Problem (SLQ) with random coefficients under the standard condition \rf{classical} was formally posed as an open question by Bismut \cite{Bismut 1976} (see also \cite{Peng 1999}). Therefore, \cite{Tang 2003,Kohlmann-Tang 2003} can be regarded as a solution to the Bismut's open question. On the other hand, the approach used in \cite{Tang 2003,Kohlmann-Tang 2003,Tang 2016} heavily depends on the positive (semi-)definiteness assumption on the weighting matrices.

\ms

Our major concern here is the indefinite situation (with random coefficients). Hence, the problem that we are investigating can be regarded as an extended Bismut's problem. Due to the indefinite nature of our problem with random coefficients, techniques used in previous works (in particular those used in \cite{Tang 2003,Kohlmann-Tang 2003,Tang 2016}) are not (directly) applicable. Note that in the current case, the associated Riccati equation becomes a nonlinear backward stochastic differential equation (BSDE, for short) whose adapted solution $(P,\L)$ has the feature that $P$ does not have to be positive definite, and $\L$ might be unbounded in general. Consequently, even if $R+D^\top PD$ is uniformly positive definite, the process
$$\Th^*=-(R+D^\top PD)^{-1}(B^\top P+D^\top PC+D^\top\L+S)$$
(which is a closed-loop optimal strategy in the deterministic coefficient case) might be unbounded. With such a $\Th^*$, the well-posedness of the closed-loop system \rf{dX*} is not obvious because the usual uniform Lipschitz condition is not satisfied. At the moment, we feel that it is unclear whether the framework of closed-loop solvability introduced by Sun and Yong \cite{Sun-Yong 2014} (for deterministic coefficient case) can be adopted to SLQ problems with random coefficients. Therefore we will concentrate on open-loop solvability (without pursuing the closed-loop solvability) in this paper, and for simplicity of terminology, we will suppress the word ``open-loop'' in the sequel, unless it is necessarily to be emphasized.

\ms

We mention that in a recent paper by Li, Wu, and Yu \cite{Li-Wu-Yu 2018}, a very special type of indefinite SLQ problems with
random coefficients (allowing some random jumps) was studied.
The crucial assumption imposed there was that the problem admits a so-called relax compensator that transforms the indefinite problem to a problem satisfying the standard condition \rf{classical}. With such an assumption, the usual arguments apply. However, it is not clear when such a compensator exists and whether the existence of a relax compensator is necessary for the solvability of the SLQ problem.
On the other hand, a notion of feedback control was recently introduced by L\"u, Wang, and Zhang \cite{Lu-Wang-Zhang 2017} for indefinite SLQ problems with random coefficients. These feedback controls look like closed-loop strategies, but the space to which they belong is unclear.

\ms

In this paper, we shall carry out a thorough investigation on the indefinite SLQ problem with random coefficients. We will first represent the cost functional of Problem (SLQ) as a bilinear form in a suitable Hilbert space, in terms of adapted solutions of FBSDEs (A special case was presented in \cite{Chen-Yong 2001}, with a longer proof). This will be convenient from a different viewpoint. Then, similar to \cite{Mou-Yong 2006}, we will show that in order the SLQ problem to admit an optimal control, the cost functional has to be convex in the control variable; and that the uniform convexity of the cost functional (which is slightly stronger than the convexity) is a sufficient condition for the existence of a unique optimal control
(see \autoref{crlry:existence-iff}). Next, under the uniform convexity condition, we shall prove that the fundamental matrix process $\BX(\cd)$ corresponding the optimal state process is invertible (see \autoref{thm:BX-invertible}) by considering a certain stopped SLQ problem and through this, we will further establish the unique solvability of the associated SRE (see \autoref{thm:Riccati}).
With the unique solvability of the SRE, we will be able to obtain a closed-loop representation of the open-loop optimal control. It is also worth noting that the SLQ problem might still be solvable even if the cost functional is merely convex. The significance of \autoref{thm:Riccati} is that it bridges the gap between uniform convexity and convexity. In fact, by considering a perturbed SLQ problem, \autoref{thm:Riccati} makes it possible to develop an $\e$-approximation scheme that is asymptotically optimal.
This idea was first introduced by Sun, Li, and Yong \cite{Sun-Li-Yong 2016} and could be applied to the random coefficient case without any difficulties. Concerning the uniform convexity of the cost functional (in $u$), we point out that the conditions \rf{classical} and \rf{n=m} are very special cases of the uniform convexity condition we have assumed in this paper. We will present some classes of problems for which neither \rf{classical} nor \rf{n=m} holds but the cost functional is uniformly convex. Finally, we point out that considering only one-dimensional Brownian motion is just for simplicity;
multi-dimensional cases can be treated similarly without essential difficulty.

\ms

The rest of the paper is organized as follows. In \autoref{Sec:Preliminaries}, we collect some preliminary results.
\autoref{sec:Hilbert-View} is devoted to the study of the SLQ problem from a Hilbert space point of view. In \autoref{sec:equivalence}, we establish the equivalence between Problems (SLQ) and $\h{\rm(SLQ)}$.
Among other things, we present a characterization of optimal controls in terms of FSDEs. In preparation for the proof of the solvability of SREs, we investigate some basic properties of the value flow in \autoref{sec:value-process}. We discuss in \autoref{sec:SRE} the solvability of SREs, as well as the closed-loop representation of
open-loop optimal controls. Some sufficient conditions for the uniform convexity of the cost functional in $u$ will be presented in \autoref{sec:uniform-convexity}. An interesting non-trivial example will be presented in \autoref{sec:example}. Finally, some concluding remarks, including the form of the results for multi-dimensional Brownian motion case, are collected in \autoref{sec:remarks}.

\section{Preliminaries}\label{Sec:Preliminaries}

In this section we collect some preliminary results which are of frequent use in the sequel.
We begin with some notations:
\begin{align*}
\dbR^n:
&\hb{~~the $n$-dimensional Euclidean space with the Eucliden norm $|\cd|$.} \\
\dbR^{n\times m}:
&\hb{~~the Euclidean space of all $(n\times m)$ real matrices; $\dbR^n=\dbR^{n\times 1}$; $\dbR=\dbR^1$.} \\
\dbS^n:
&\hb{~~the space of all symmetric $(n\times n)$ real matrices.} \\
I_n:
&\hb{~~the identity matrix of size $n$.} \\
M^\top:
&\hb{~~the transpose of a matrix $M$.} \\
\tr(M):
&\hb{~~the trace of a matrix $M$.} \\
\lan\,\cd\,,\,\cd\ran:
&\hb{~~the Frobenius inner product on $\dbR^{n\times m}$, which is defiend by $\lan A,B\ran=\tr(A^\top B)$.} \\
|M|:
&\hb{~~the Frobenius norm of a matrix $M$, defined by $\big[\tr(A^\top B)\big]^{1\over2}$.}
\end{align*}
Recall that $\cX_t\equiv L^2_{\cF_t}(\Om;\dbR^n)$ is the space of all $\cF_t$-measurable, $\dbR^n$-valued random variables
$\xi$ with $\dbE|\xi|^2<\i$, and that $\cU[t,T]\equiv L_\dbF^2(t,T;\dbR^m)$ is the space of $\dbF$-progressively measurable,
$\dbR^m$-valued processes $u=\{u(s);t\les s\les T\}$ such that $\dbE\int_t^T|u(s)|^2ds<\i$.
To avoid prolixity later, we further introduce the following spaces of random variables and processes: For Euclidean space $\dbH=\dbR^n,\dbR^{m\times n},\dbS^n$, etc. and $p,q\ges1$,
\begin{align*}
L^\i_{\cF_t}(\Om;\dbH):
   &\hb{~~the space of bounded, $\cF_t$-measurable, $\dbH$-valued random variables.} \\
L_\dbF^q(\Om;L^p(t,T;\dbH)):&\hb{~~the space of $\dbF$-progressively measurable processes $X:[t,T]\times\Om\to\dbH$}\\
&\hb{~~with $\dbE\(\int_t^T|X(s)|^pds\)^{q\over p}<\infty$.}\\
L_\dbF^\i(\Om;L^p(t,T;\dbH)):&\hb{~~the space of $\dbF$-progressively measurable processes $X:[t,T]\times\Om\to\dbH$}\\
&\hb{~~with $\esssup_{\om\in\Om}\int_t^T|X(s,\om)|^pds<\infty$.}\\
L_\dbF^p(\Om;C([t,T];\dbH)):
   &\hb{~~the space of $\dbF$-adapted, continuous processes $X:[t,T]\times\Om\to\dbH$} \\
   &\hb{~~with $\dbE\big[\sup_{t\les s\les T}|X(s)|^p\big]<\i$.}\\
L_\dbF^\i(\Om;C([t,T];\dbH)):
   &\hb{~~the space of bounded, $\dbF$-adapted, continuous, $\dbH$-valued processes.}
\end{align*}
We denote $L^p_\dbF(\Om;L^p(t,T;\dbH))=L^p_\dbF(t,T;\dbH)$. Note that both $\cX_t$ and $\cU[t,T]$ are Hilbert spaces under their natural inner products. We shall use
$$ [\![ u,v]\!] = \dbE\int_t^T\lan u(s),v(s)\ran ds,$$
to denote the inner product of $u,v\in\cU[t,T]$, distinguishing it from the Euclidean inner product on a Euclidean space.
\ss

Next we recall some results concerning existence and uniqueness of solutions to forward SDEs
(FSDEs, for short) and BSDEs with random coefficients. Consider the linear FSDE
\bel{SDE0}\left\{\begin{aligned}
   dX(s) &= [A(s)X(s)+b(s)]ds + [C(s)X(s)+\si(s)]dW(s),\q s\in[t,T],\\
    X(t) &= \xi,
\end{aligned}\right.\ee
and the linear BSDE
\bel{BSDE0}\left\{\begin{aligned}
   dY(s) &=-[A(s)^\top Y(s)+C(s)^\top Z(s)+\f(s)]ds + Z(s)dW(s), \q s\in[t,T],\\
    Y(T) &= \eta.
\end{aligned}\right.\ee
We have the following result.

\begin{lemma}\label{lmm:sol-F&B-SDE} \sl
Suppose that
$$A(\cd)\in L^\i_\dbF(\Om;L^1(0,T;\dbR^{n\times n})),\qq C(\cd)\in L^\i_\dbF(\Om;L^2(0,T;\dbR^{n\times n})).$$
Then the following hold:
\begin{enumerate}[\q\rm(i)]
\item For any initial pair $(t,\xi)\in\cD$ and any processes $b\in L^2_\dbF(\Om;L^1(t,T;\dbR^n))$, $\si\in L_\dbF^2(t,T;\dbR^n)$,
      \rf{SDE0} has a unique solution $X$, which belongs to the space $L_\dbF^2(\Om;C([t,T];\dbR^n))$.
\item For any terminal state $\eta\in L^2_{\cF_T}(\Om;\dbR^n)$ and any $\f\in L^2_\dbF(\Om;L^1(t,T;\dbR^n))$,
      \rf{BSDE0} has a unique adapted solution $(Y,Z)$, which belongs to the space
      $L_\dbF^2(\Om;C([t,T];\dbR^n))\times L^2_\dbF(t,T;\dbR^n)$.
\end{enumerate}
Moreover, there exists a constant $K>0$, depending only on $A$, $C$, and $T$, such that
\begin{align*}
  &\dbE\[\sup_{t\les s\les T}|X(s)|^2\]
       \les K\dbE\[|\xi|^2+\(\int_t^T|b(s)|ds\)^2+\int_t^T|\si(s)|^2ds\],\\
  &\dbE\[\sup_{t\les s\les T}|Y(s)|^2 + \int_t^T|Z(s)|^2ds\]
       \les K\dbE\[|\eta|^2+\(\int_t^T|\f(s)|ds\)^2\].
\end{align*}
\end{lemma}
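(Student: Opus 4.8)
The plan is to establish (i) and (ii) by the standard contraction-mapping argument, but carried out carefully so that the random (merely integrable-in-time) nature of $A$ and $C$ is handled correctly; the uniform essential bounds $\esssup_\om\int_0^T|A|ds<\i$ and $\esssup_\om\int_0^T|C|^2ds<\i$ are exactly what make this work. For part (i), fix $(t,\xi)\in\cD$ and $b,\si\in L^2_\dbF(t,T;\dbR^n)$. I would first treat the inhomogeneous equation by a fixed-point scheme: on the Banach space $L^2_\dbF(\Om;C([t,T];\dbR^n))$ (or, more conveniently, on $L^2_\dbF(t,T;\dbR^n)$ equipped with a weighted norm $\dbE\int_t^T e^{-\beta\Lambda(s)}|X(s)|^2ds$ where $\Lambda(s)=\int_t^s(|A(r)|+|C(r)|^2)dr$), define the map $\Phi$ sending $X$ to the process $\bar X(s)=\xi+\int_t^s[A X+b]dr+\int_t^s[CX+\si]dW$. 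Using the Burkholder--Davis--Gundy inequality for the stochastic integral and H\"older's inequality in the $ds$-integral — here the $|A(r)|dr$ and $|C(r)|^2dr$ measures play the role usually played by $ds$ — one gets, for the difference of two inputs $X_1,X_2$,
\[
\dbE\Big[\sup_{t\les s\les \tau}|\bar X_1(s)-\bar X_2(s)|^2\Big]\les C\,\dbE\int_t^\tau\big(|A(r)|+|C(r)|^2\big)\,|X_1(r)-X_2(r)|^2dr,
\]
and choosing $\tau$ so that $\esssup_\om\int_t^\tau(|A|+|C|^2)dr$ is small makes $\Phi$ a contraction on $[t,\tau]$; one then patches finitely many subintervals (the number depending only on $A,C,T$) to cover $[t,T]$. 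This yields existence and uniqueness in $L^2_\dbF(\Om;C([t,T];\dbR^n))$.

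For the a priori estimate in (i), I would apply It\^o's formula to $|X(s)|^2$, integrate, take expectations, and use BDG on the martingale term to obtain
\[
\dbE\Big[\sup_{t\les s\les T}|X(s)|^2\Big]\les C\,\dbE\Big[|\xi|^2+\int_t^T|b|^2dr+\int_t^T|\si|^2dr+\int_t^T\big(|A(r)|+|C(r)|^2\big)\sup_{t\les r'\les r}|X(r')|^2dr\Big],
\]
and then close the estimate by Gronwall's inequality applied to $m(s)=\dbE[\sup_{t\les r\les s}|X(r)|^2]$ with the finite (deterministic) weight $\int_t^s(|A|+|C|^2)dr$; this produces the constant $K=K(A,C,T)$ claimed. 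Part (ii) for the linear BSDE \rf{BSDE0} I would handle by the analogous contraction argument in $L^2_\dbF(\Om;C([t,T];\dbR^n))\times L^2_\dbF(t,T;\dbR^n)$: define $\Psi(Y,Z)$ via the martingale representation of $\dbE[\eta+\int_t^T(A^\top Y+C^\top Z+\f)dr\mid\cF_s]$, split off $Z$ from the representation, and estimate the difference of two iterates using It\^o on $|Y_1-Y_2|^2$, which yields both a $\sup$-in-time bound on $Y_1-Y_2$ and an $L^2$ bound on $Z_1-Z_2$; again a small-interval contraction plus patching gives global existence and uniqueness, and the same It\^o-plus-BDG-plus-Gronwall computation gives the stated estimate with a constant depending only on $A,C,T$.

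The main obstacle — and the reason the hypotheses are phrased with $\int_0^T|A|ds$ and $\int_0^T|C|^2ds$ rather than pointwise boundedness — is that one cannot use $ds$ as the controlling measure in the Gronwall and contraction steps; every estimate must be tracked against the random-but-uniformly-bounded measures $|A(s)|ds$ and $|C(s)|^2ds$. Concretely, the delicate point is to verify that the weighted (or small-interval) norm really does give a strict contraction \emph{uniformly in $\om$}, so that the patching into finitely many subintervals uses a number of pieces that is deterministic and depends only on $\esssup_\om\int_0^T(|A|+|C|^2)dr$ and $T$; once that uniformity is in hand, the rest is the routine It\^o/BDG/Gronwall machinery. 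For the BSDE the additional subtlety is the usual one: the $Z$-component must be extracted from a martingale representation and estimated \emph{simultaneously} with $Y$ (there is no separate equation for $Z$), but this is standard once the contraction is set up on the product space.
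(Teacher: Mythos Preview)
Your approach is correct and is precisely the standard contraction/BDG/Gronwall argument; the paper itself does not give a proof but simply remarks that the argument is almost identical to \cite[Proposition~2.1]{Sun-Yong 2014} and refers the reader there. The key point you identify---replacing the Lebesgue measure $ds$ by the uniformly bounded random measures $|A(s)|\,ds$ and $|C(s)|^2\,ds$ in all the estimates so that the number of patching subintervals is deterministic---is exactly the modification needed over the bounded-coefficient case, and is what the cited reference carries out.
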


Note that in \autoref{lmm:sol-F&B-SDE}, the coefficients $A$ and $C$ are allowed to be unbounded,
which is a little different from the standard case.
However, the proof of \autoref{lmm:sol-F&B-SDE} is almost the same as that of
\cite[Proposition 2.1]{Sun-Yong 2014}.
So we omit the details here and refer the reader to \cite{Sun-Yong 2014}.

\ms

Consider now the following BSDE for $\dbS^n$-valued processes over the interval $[0,T]$:
\bel{BSDE-MN}\left\{\begin{aligned}
   dM(s) &=-\big[M(s)A(s) + A(s)^\top M(s) + C(s)^\top M(s)C(s) \\
   &\hp{=-\big[} +N(s)C(s) + C(s)^\top N(s)+ Q(s)\big]ds + N(s)dW(s), \\
    M(T) &=G.
\end{aligned}\right.\ee
From \autoref{lmm:sol-F&B-SDE} (ii) it follows that under the assumptions \ref{(A1)}--\ref{(A2)},
equation \rf{BSDE-MN} admits a unique square-integrable adapted solution $(M,N)$.
The following result further shows that $M=\{M(s);0\les s\les T\}$ is actually a bounded process.

\begin{proposition}\label{prop:M-bounded} \sl
Let {\rm\ref{(A1)}--\ref{(A2)}} hold. Then the first component $M$ of the adapted solution
$(M,N)$ to the BSDE \rf{BSDE-MN} is bounded.
\end{proposition}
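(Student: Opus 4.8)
The plan is to interpret the $\dbS^n$-valued BSDE \rf{BSDE-MN} probabilistically as the second-moment operator of the homogeneous state equation, and then read off boundedness from the linear growth estimate of \autoref{lmm:sol-F&B-SDE}(i). Concretely, fix $(t,\xi)\in\cD$ and let $X_0(\cd\,;t,\xi)$ be the solution of the uncontrolled equation $dX_0=A X_0\,ds+C X_0\,dW$ with $X_0(t)=\xi$. Applying It\^o's formula to $s\mapsto\lan M(s)X_0(s),X_0(s)\ran$ and using the dynamics of both $M$ and $X_0$, all the ``cross'' terms (those involving $A$, $C$, $N$) cancel by the very structure of the drift of \rf{BSDE-MN}, leaving
$$ d\lan M(s)X_0(s),X_0(s)\ran = -\lan Q(s)X_0(s),X_0(s)\ran ds + (\hb{martingale terms}). $$
Taking conditional expectation $\dbE[\,\cd\mid\cF_t]$ and integrating from $t$ to $T$ (after a standard localization to handle the square-integrable martingale part) yields
$$ \lan M(t)\xi,\xi\ran = \dbE\[\lan G X_0(T),X_0(T)\ran + \int_t^T\lan Q(s)X_0(s),X_0(s)\ran ds \;\Big|\;\cF_t\]. $$

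From here boundedness is immediate: by \ref{(A2)}, $|G|\les K_0$ and $|Q(s)|\les K_0$ for some constant $K_0$, so the right-hand side is bounded in absolute value by $K_0\,\dbE\big[\sup_{t\les s\les T}|X_0(s)|^2\mid\cF_t\big]$; and by the estimate in \autoref{lmm:sol-F&B-SDE}(i) (applied with $b=\si=0$), together with the fact that the constant $K$ there depends only on $A$, $C$, $T$ — which are bounded by \ref{(A1)} — one gets a conditional version $\dbE\big[\sup_{t\les s\les T}|X_0(s)|^2\mid\cF_t\big]\les K|\xi|^2$, $\dbP$-a.s. Hence $|\lan M(t)\xi,\xi\ran|\les K_0 K|\xi|^2$ for every $\xi\in\cX_t$. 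Since $M(t)$ is symmetric, this bound on the quadratic form controls the operator norm: choosing $\xi$ to be (the random variable equal to) a deterministic unit eigenvector of $M(t,\om)$ on the event where a given eigenvalue is large would give a contradiction, so $|M(t)|\les n\,K_0 K$ a.s., uniformly in $t$. Because $t\in[0,T]$ is arbitrary and $M$ has continuous paths, $M$ is a bounded process.

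The step requiring the most care is the passage to the \emph{conditional} estimate $\dbE[\sup_s|X_0(s)|^2\mid\cF_t]\les K|\xi|^2$ with a constant independent of $\om$, and the attendant localization in the It\^o computation. For the former, one notes that the Gronwall/BDG argument underlying \autoref{lmm:sol-F&B-SDE}(i) is entirely pathwise in the conditioning variable once one conditions on $\cF_t$ (the coefficients $A,C$ are bounded uniformly in $\om$, so the constant does not degrade), which is exactly the regime in which that lemma was stated — the coefficients there are allowed to be merely essentially bounded in the relevant integral sense. For the localization, one introduces stopping times $\t_k=\inf\{s\ges t: |X_0(s)|+\int_t^s|N(r)|^2dr\ges k\}\wedge T$, derives the identity with $T$ replaced by $\t_k$, and lets $k\to\i$ using dominated convergence (the dominating function being $K_0 K|\xi|^2$ on the left and the integrable bound on the right); the martingale terms have zero conditional expectation at each stage. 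Alternatively — and this is perhaps cleaner — one can avoid conditioning altogether by first proving the \emph{deterministic} analogue: for constant $\xi=x\in\dbR^n$ viewed as an $\cF_t$-measurable random variable, the above gives $\dbE\lan M(t)x,x\ran\les K_0K|x|^2$, but to get the a.s. bound on $M(t,\om)$ itself one genuinely needs the conditional version, so the localization cannot be fully sidestepped.

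One last remark: an entirely analogous (and slightly shorter) route is to compare \rf{BSDE-MN} with the two scalar BSDEs obtained by replacing $G$ by $\pm K_0 I_n$ and $Q$ by $\pm K_0 I_n$ and invoking a comparison principle for matrix-valued linear BSDEs; but since the paper has only recorded existence/uniqueness (\autoref{lmm:sol-F&B-SDE}) and not a matrix comparison theorem, the It\^o-formula argument above, which uses nothing beyond what the excerpt provides, is the safer choice.
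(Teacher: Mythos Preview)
Your argument is correct but follows a genuinely different route from the paper's. The paper gives a direct energy estimate on $M$ alone: apply It\^o's formula to $s\mapsto e^{\beta s}|M(s)|^2$, bound the drift $\Pi=MA+A^\top M+C^\top MC+NC+C^\top N+Q$ linearly in $|M|+|N|+1$, absorb the $|N|^2$ term via Cauchy--Schwarz, choose $\beta=K^2+2K$ to cancel the resulting $|M|^2$ term, and take conditional expectations to obtain $|M(t)|^2\les e^{\beta T}\dbE[|G|^2\mid\cF_t]+\hb{const}$. No forward equation enters. You instead identify $\lan M(t)\xi,\xi\ran$ with the conditional ``cost'' $\dbE[\lan GX_0(T),X_0(T)\ran+\int_t^T\lan QX_0,X_0\ran\,ds\mid\cF_t]$ of the uncontrolled system and read off the bound from the forward moment estimate --- a Feynman--Kac representation that in fact anticipates the relation $\bar Y=M\bar X$ derived later in the proof of \autoref{thm:rep-cost}. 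Your route is more informative about what $M$ \emph{is}; the paper's route is entirely self-contained within the BSDE and avoids the conditional moment estimate.

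Two small clean-ups. For the conditional version of \autoref{lmm:sol-F&B-SDE}(i), the crispest justification is not ``pathwise in the conditioning variable'' but rather: for any $\G\in\cF_t$, apply the unconditional bound to the initial state $\xi{\bf1}_\G$ and use linearity $X_0(\cd\,;t,\xi{\bf1}_\G)=X_0(\cd\,;t,\xi){\bf1}_\G$; since $\G\in\cF_t$ is arbitrary this gives the a.s.\ conditional inequality with the same constant $K$. For the last step, the phrase ``deterministic unit eigenvector of $M(t,\om)$'' is a slip --- the eigenvector is $\cF_t$-measurable but random; either invoke a measurable selection, or more simply run the identity with deterministic $\xi=x$ over a countable dense set of $x\in\dbR^n$ and extend to all $x$ by continuity of $x\mapsto\lan M(t,\om)x,x\ran$, which immediately bounds the spectrum of $M(t,\om)$ on a single full-measure set.
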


\begin{proof}
Let $\b>0$ be undetermined and denote
\bel{Pi-7-12} \Pi=MA + A^\top M + C^\top MC +NC + C^\top N+ Q. \ee
Note that we have suppressed the argument $s$ in \rf{Pi-7-12} and will do so hereafter whenever
there is no confusion. Applying It\^o's formula to $s\mapsto e^{\b s}|M(s)|^2$ yields
\begin{align}\label{estimate-M}
e^{\b t}|M(t)|^2
   &= e^{\b T}|G|^2 +\int_t^Te^{\b s}\[-\b|M(s)|^2 +2\lan M(s),\Pi(s)\ran -|N(s)|^2\]ds \nn\\
   &\hp{=\ } -2\int_t^Te^{\b s}\lan M(s),N(s)\ran dW(s), \q\forall 0\les t\les T,~\as
\end{align}
By \ref{(A1)}--\ref{(A2)}, the processes $A$, $C$, and $Q$ are bounded.
Thus, there exists a constant $K>0$ such that
$$|\Pi(s)|\les K\big[|M(s)|+|N(s)|+1\big], \q \ae~s\in[0,T],~\as$$
Using the Cauchy-Schwarz inequality, one has
\begin{align*}
2\lan M(s),\Pi(s)\ran &\les 2K|M(s)|\cd[|M(s)|+|N(s)|+1] \\
                         &= 2K|M(s)|^2+2K|M(s)|\cd|N(s)|+2K \\
                      &\les 2K|M(s)|^2+K^2|M(s)|^2 +|N(s)|^2+2K \\
                         &= (K^2+2K)|M(s)|^2 +|N(s)|^2+2K.
\end{align*}
Substituting this estimate back into \rf{estimate-M} and then taking $\b=K^2+2K$, we obtain
$$ e^{\b t}|M(t)|^2
   \les e^{\b T}|G|^2 +\int_t^T 2Ke^{\b s}ds -2\int_t^Te^{\b s}\lan M(s),N(s)\ran dW(s). $$
Observing that $\int_0^\cd e^{\b s}\lan M(s),N(s)\ran dW(s)$ is a martingale, we may take conditional
expectations with respect to $\cF_t$ on both sides of the above to obtain
$$ |M(t)|^2 \les e^{\b t}|M(t)|^2
            \les e^{\b T}\dbE\[|G|^2\,\big|\,\cF_t\] +\int_0^T 2Ke^{\b s}ds, \q\forall t\in[0,T]. $$
The assertion follows, since $G\in L^\i_{\cF_T}(\Om;\dbS^n)$.
\end{proof}

\section{A Hilbert Space Point of View}\label{sec:Hilbert-View}

Inspired by \cite{Mou-Yong 2006}, we study in this section the SLQ problem from a Hilbert space point of view.
Following the idea of \cite{Chen-Yong 2001}, we shall derive a functional representation of $J(t,\xi;u)$,
which has several important consequences and plays a basic role for the analysis of the stochastic value
flow $\h V(t,\xi)$ in \autoref{sec:value-process}.
As mentioned earlier, for notational convenience we will frequently suppress the $s$-dependence of a stochastic
process when it is involved in a differential equation or an integral.

\ms

First, we present a simple lemma.

\begin{lemma}\label{lmm:J-rep} \sl
Let {\rm\ref{(A1)}--\ref{(A2)}} hold.
Then for any initial pair $(t,\xi)\in\cD$ and control $u\in\cU[t,T]$,
\begin{equation}\label{hatJ-biaoshi1}
\h J(t,\xi;u) = \lan Y(t),\xi\ran
     +\dbE\[\int_t^T\llan B^\top Y+D^\top Z+SX+Ru,u\rran ds\bigm|\cF_t\],
\end{equation}
where $(X,Y,Z)$ is the adapted solution to the following controlled decoupled linear FBSDE:
\bel{FBSDE-XYZu}\left\{\begin{aligned}
   & dX(s)=(AX+Bu)ds + (CX+Du)dW(s), \\
   & dY(s)=-\big(A^\top Y+C^\top Z+QX+S^\top u\big)ds+ZdW(s), \\
   & X(t)=\xi, \q Y(T)=GX(T).
\end{aligned}\right.\ee
\end{lemma}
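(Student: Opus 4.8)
The plan is to apply Itô's formula to $s\mapsto\langle Y(s),X(s)\rangle$ on $[t,T]$ and read off the representation. First I would note that, under \ref{(A1)}--\ref{(A2)}, the forward SDE in \rf{FBSDE-XYZu} has a unique solution $X\in L_\dbF^2(\Om;C([t,T];\dbR^n))$ by \autoref{lmm:sol-F&B-SDE}~(i); since then $QX+S^\top u\in L^2_\dbF(t,T;\dbR^n)$ and $GX(T)\in L^2_{\cF_T}(\Om;\dbR^n)$, the backward SDE has a unique adapted solution $(Y,Z)\in L_\dbF^2(\Om;C([t,T];\dbR^n))\times L^2_\dbF(t,T;\dbR^n)$ by \autoref{lmm:sol-F&B-SDE}~(ii). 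So the triple $(X,Y,Z)$ in the statement is well defined.

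Next, applying Itô's formula to $\langle Y,X\rangle$ and using the dynamics in \rf{FBSDE-XYZu}, the drift terms involving $A$ cancel (the $\langle A^\top Y,X\rangle$ from $dY$ against $\langle Y,AX\rangle$ from $dX$), as do the $C$-cross terms from the quadratic variation against the $\langle C^\top Z,X\rangle$ contribution; what survives is
\begin{equation}\label{ito-YX}
d\langle Y,X\rangle = \big[\langle Y,Bu\rangle + \langle Z,Du\rangle - \langle QX+S^\top u,X\rangle\big]ds + (\cdots)\,dW(s).
\end{equation}
Integrating from $t$ to $T$, using $Y(T)=GX(T)$ and $X(t)=\xi$, and taking $\dbE[\,\cdot\,|\cF_t]$ (the stochastic integral is a genuine martingale because $Y\in L_\dbF^2(\Om;C([t,T];\dbR^n))$ and the integrand is square-integrable by Cauchy--Schwarz and boundedness of $B,D$) gives
$$
\dbE\big[\langle GX(T),X(T)\rangle\,\big|\,\cF_t\big] - \langle Y(t),\xi\rangle
 = \dbE\lt[\int_t^T\big(\langle B^\top Y+D^\top Z,u\rangle - \langle QX+S^\top u,X\rangle\big)ds\,\Big|\,\cF_t\rt].
$$
Finally I would rearrange: $\h J(t,\xi;u)=\dbE[\langle GX(T),X(T)\rangle\,|\,\cF_t]+\dbE[\int_t^T(\langle QX,X\rangle+2\langle SX,u\rangle+\langle Ru,u\rangle)ds\,|\,\cF_t]$, so substituting the displayed identity and collecting the terms in $u$ (noting $\langle S^\top u,X\rangle=\langle SX,u\rangle$, so the two copies combine into $2\langle SX,u\rangle$ which matches the quadratic form) yields exactly \rf{hatJ-biaoshi1}.

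There is no real obstacle here — the only point requiring a word of care is the justification that the $dW$-term in \rf{ito-YX} has zero conditional expectation, i.e.\ that its integrand lies in $L^2_\dbF(t,T;\dbR^n)$; this follows from $\dbE[\sup_{t\le s\le T}|Y(s)|^2]<\infty$, $\dbE[\sup_{t\le s\le T}|X(s)|^2]<\infty$, and $\dbE\int_t^T|Z|^2ds<\infty$ via the Cauchy--Schwarz and Burkholder--Davis--Gundy inequalities, so the local martingale $\int_t^\cd(\langle Y,CX+Du\rangle+\langle X,Z\rangle)dW$ is a true martingale on $[t,T]$.
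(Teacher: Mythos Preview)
Your proposal is correct and follows essentially the same route as the paper: apply It\^o's formula to $s\mapsto\lan Y(s),X(s)\ran$, use the terminal condition $Y(T)=GX(T)$, substitute the resulting expression for $\lan GX(T),X(T)\ran$ into $\h J(t,\xi;u)$, and observe that the $dW$-integral has zero $\cF_t$-conditional expectation. Your write-up is in fact slightly more careful than the paper's, which simply asserts the vanishing of the stochastic integral; your BDG argument (bounding $\dbE[(\int_t^T|\cd|^2ds)^{1/2}]$ via $\dbE[\sup|X|^2]$, $\dbE[\sup|Y|^2]$, and $\dbE\int|Z|^2ds$) is the right way to justify that the local martingale is a true martingale here, even though the integrand need not literally lie in $L^2_\dbF$.
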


\begin{proof}
Note that the FSDE in \rf{FBSDE-XYZu} is exactly the state equation \rf{state}.
Applying It\^{o}'s formula to $s\mapsto\lan Y(s),X(s)\ran$ yields
\begin{align}\label{GX(T)X(T)}
\lan GX(T),X(T)\ran
  &=\lan Y(t),\xi\ran + \int_t^T\[\blan B^\top Y+D^\top Z-SX,u\bran-\lan QX,X\ran\]ds \nn\\
  &\hp{=\ }\qq +\int_t^T\[\lan Z,X\ran+\lan Y,CX+Du\ran\]dW(s).
\end{align}
Substituting \rf{GX(T)X(T)} into $\h J(t,\xi;u)$ and noting that
$$\dbE\[\int_t^T\(\lan Z,X\ran+\lan Y,CX+Du\ran\)dW(s)\bigm|\cF_t\]=0,$$
we obtain \rf{hatJ-biaoshi1}.
\end{proof}

The adapted solution $(X,Y,Z)$ to the FBSDE \rf{FBSDE-XYZu} is determined jointly by the initial state $\xi$
and the control $u$. To separate $\xi$ and $u$, let $(\wt X,\wt Y,\wt Z)$ and $(\bar X,\bar Y,\bar Z)$ be the
adapted solutions to the decoupled linear FBSDEs
\bel{FBSDE-tiXYZu}\left\{\begin{aligned}
   & d\wt X(s)=(A\wt X+Bu)ds + (C\wt X+Du)dW(s), \\
   & d\wt Y(s)=-\big(A^\top\wt Y+C^\top\wt Z+Q\wt X+S^\top u\big)ds + \wt ZdW(s), \\
   & \wt X(t)=0, \q \wt Y(T)=G\wt X(T),
\end{aligned}\right.\ee
and
\bel{FBSDE-barXYZ}\left\{\begin{aligned}
   & d\bar X(s)=A\bar Xds+C\bar XdW(s), \\
   & d\bar Y(s)=-\big(A^\top\bar Y+C^\top\bar Z+Q\bar X\big)ds +\bar Z dW(s), \\
   & \bar X(t)=\xi, \q\bar Y(T)=G\bar X(T),
\end{aligned}\right.\ee
respectively. Then $(X,Y,Z)$ can be written as the sum of $(\wt X,\wt Y,\wt Z)$ and $(\bar X,\bar Y,\bar Z)$:
$$X(s) = \wt X(s)+\bar X(s), \q Y(s) = \wt Y(s)+\bar Y(s), \q Z(s) =\wt Z(s)+\bar Z(s); \q s\in[t,T].$$
Note that $(\wt X,\wt Y,\wt Z)$ (respectively, $(\bar X,\bar Y,\bar Z)$) depends linearly on $u$ (respectively, $\xi$) alone.
We now define two linear operators
$$\cN_t:\cU[t,T]\to\cU[t,T], \q \cL_t:\cX_t\to\cU[t,T]$$
as follows: For any $u\in\cU[t,T]$, $\cN_t u$ is defined by
\begin{equation}\label{cN-def}
   [\cN_t u](s) = B(s)^\top\wt Y(s)+D(s)^\top\wt Z(s)+S(s)\wt X(s)+R(s)u(s), \q s\in[t,T],
\end{equation}
and for any $\xi\in\cX_t$, $\cL_t\xi$ is defined by
\begin{equation}\label{cL-def}
   [\cL_t\xi](s) = B(s)^\top\bar Y(s)+D(s)^\top \bar Z(s)+S(s)\bar X(s), \q s\in[t,T].
\end{equation}
For these two operators, we have the following result.

\begin{proposition} \sl
Let {\rm\ref{(A1)}--\ref{(A2)}} hold. Then
\begin{enumerate}[\q\rm(i)]
\item the linear operator $\cN_t$ defined by \rf{cN-def} is a bounded self-adjoint operator
      on the Hilbert space $\cU[t,T]$;
\item the linear operator $\cL_t$ defined by \rf{cL-def} is a bounded operator from the Hilbert
      space $\cX_t$ into the Hilbert space $\cU[t,T]$.
      Moreover, there exists a constant $K>0$ independent of $t$ and $\xi$ such that
      \bel{uni-bound-Lt}[\![ \cL_t\xi,\cL_t\xi ]\!] \les K\,\dbE|\xi|^2, \q\forall \xi\in\cX_t.\ee
\end{enumerate}
\end{proposition}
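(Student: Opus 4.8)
The plan is to establish both assertions by reducing everything to the a priori estimates from \autoref{lmm:sol-F&B-SDE}, which give continuous dependence on data for the FSDE and the BSDE in \rf{FBSDE-tiXYZu} and \rf{FBSDE-barXYZ}. For part (ii), start with $\xi \in \cX_t$ and let $(\bar X, \bar Y, \bar Z)$ solve \rf{FBSDE-barXYZ}. Since the forward equation for $\bar X$ has zero inhomogeneous terms, the estimate for FSDEs gives $\dbE[\sup_{t\le s\le T}|\bar X(s)|^2] \le K\,\dbE|\xi|^2$ with $K$ depending only on $A$, $C$, $T$ (hence independent of $t$ and $\xi$, using \ref{(A1)}). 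Feeding $Q\bar X$ as the inhomogeneous term into the BSDE estimate, and using boundedness of $Q$, $G$ from \ref{(A2)}, yields $\dbE[\sup|\bar Y(s)|^2 + \int_t^T |\bar Z(s)|^2 ds] \le K'\dbE|\xi|^2$ with $K'$ again independent of $t$ and $\xi$. Then from the definition \rf{cL-def} and boundedness of $B$, $D$, $S$,
$$
[\![\cL_t\xi,\cL_t\xi]\!] = \dbE\int_t^T |B^\top\bar Y+D^\top\bar Z+S\bar X|^2\,ds \le K''\,\dbE\Big[\int_t^T\big(|\bar Y|^2+|\bar Z|^2+|\bar X|^2\big)ds\Big] \le K\,\dbE|\xi|^2,
$$
which is exactly \rf{uni-bound-Lt}; linearity of $\cL_t$ is immediate from linearity of \rf{FBSDE-barXYZ} in $\xi$ and of \rf{cL-def}.

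For part (i), boundedness of $\cN_t$ follows by the identical argument applied to \rf{FBSDE-tiXYZu}: the forward equation now has inhomogeneous terms $Bu$ (drift) and $Du$ (diffusion), so $\dbE[\sup|\wt X|^2] \le K\,\dbE\int_t^T|u|^2 ds$, then the BSDE carries the extra term $S^\top u$, giving $\dbE[\sup|\wt Y|^2+\int_t^T|\wt Z|^2 ds] \le K\,\dbE\int_t^T|u|^2 ds$, and finally from \rf{cN-def} together with boundedness of $R$ we get $[\![\cN_t u,\cN_t u]\!]\le K\,[\![u,u]\!]$. The one genuinely nontrivial point is self-adjointness. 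The plan is to compute $[\![\cN_t u, v]\!]$ for $u,v\in\cU[t,T]$ and show it is symmetric in $u,v$. Let $(\wt X^u,\wt Y^u,\wt Z^u)$ and $(\wt X^v,\wt Y^v,\wt Z^v)$ be the solutions of \rf{FBSDE-tiXYZu} driven by $u$ and $v$ respectively. I would apply It\^o's formula to $s\mapsto \lan \wt Y^u(s), \wt X^v(s)\ran$ on $[t,T]$, exactly as in the proof of \autoref{lmm:J-rep}: the martingale part has zero expectation (using the square-integrability from \autoref{lmm:sol-F&B-SDE}), the terminal term gives $\dbE\lan G\wt X^u(T),\wt X^v(T)\ran$, and the drift terms produce, after cancellation of the $A$, $C$, $Q$ contributions against their transposes,
$$
\dbE\lan G\wt X^u(T),\wt X^v(T)\ran = \dbE\int_t^T\Big[\lan B^\top\wt Y^u + D^\top\wt Z^u + S\wt X^u,\, v\ran - \lan Q\wt X^u,\wt X^v\ran + \lan \wt Y^u, B v\ran + \lan \wt Z^u, D v\ran\Big]ds \ \text{(schematically)},
$$
the point being that the resulting bilinear expression in $(u,v)$ is manifestly symmetric once one adds the $\lan Ru,v\ran$ term coming from \rf{cN-def}. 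Concretely I expect to arrive at
$$
[\![\cN_t u, v]\!] = \dbE\lan G\wt X^u(T),\wt X^v(T)\ran + \dbE\int_t^T\Big[\lan Q\wt X^u,\wt X^v\ran + \lan S\wt X^u, v\ran + \lan S\wt X^v, u\ran + \lan Ru,v\ran\Big]ds,
$$
or some equivalent symmetric form, which is visibly invariant under $u\leftrightarrow v$, giving $[\![\cN_t u,v]\!] = [\![u,\cN_t v]\!]$.

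The main obstacle is bookkeeping in the self-adjointness computation: one must track which cross-terms cancel via the forward/backward coupling (the $A^\top$, $C^\top$, $Q$ in the $\wt Y$-drift are precisely designed to cancel the $A$, $C$ in the $\wt X$-drift/diffusion when one expands $d\lan \wt Y^u, \wt X^v\ran$), and then recognize the survivors as a symmetric bilinear form. There is also a minor technical point — justifying that the stochastic integral in the It\^o expansion is a true martingale — but this is handled by the $L^2$-bounds of \autoref{lmm:sol-F&B-SDE} via a standard localization argument, so I would simply cite those bounds. Everything else is a routine application of the linear estimates already in hand.
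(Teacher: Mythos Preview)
Your proposal is correct and follows essentially the same approach as the paper: boundedness of both operators is obtained directly from the a priori estimates in \autoref{lmm:sol-F&B-SDE}, and self-adjointness of $\cN_t$ comes from applying It\^o's formula to the cross-pairing $\lan\wt Y^u,\wt X^v\ran$. The paper carries this out by applying It\^o twice (to $\lan\wt Y_2,\wt X_1\ran$ and $\lan\wt Y_1,\wt X_2\ran$) and equating the two resulting identities, whereas you aim for the manifestly symmetric bilinear form from a single application; these are equivalent computations, and your target expression for $[\![\cN_t u,v]\!]$ is exactly what falls out once the $A,C$ terms cancel and the $S\wt X^u$ and $Ru$ contributions from \rf{cN-def} are added in (your intermediate ``schematic'' display double-counts the $B^\top\wt Y^u$ and $D^\top\wt Z^u$ terms, but the final formula is right).
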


\begin{proof}
(i) The boundedness of $\cN_t$ is a direct consequence of the estimates in \autoref{lmm:sol-F&B-SDE}.
To prove that $\cN_t$ is self-adjoint, it suffices to show that for any $u_1,u_2\in\cU[t,T]$,
\bel{self-adjoint}\dbE\int_t^T\blan[\cN_tu_1](s),u_2(s)\bran ds
= \dbE\int_t^T\blan u_1(s),[\cN_tu_2](s)\bran ds. \ee
To this end, we take two arbitrary processes $u_1,u_2\in\cU[t,T]$
and let $(\wt X_i,\wt Y_i,\wt Z_i)$ $(i=1,2)$ be the adapted solution to \rf{FBSDE-tiXYZu} in which
$u$ is replaced by $u_i$.
Applying It\^{o}'s formula to $s\mapsto\blan\wt Y_2(s),\wt X_1(s)\bran$ yields
$$\dbE\blan G\wt X_2(T),\wt X_1(T)\bran
= \dbE\int_t^T\[\blan B^\top\wt Y_2+D^\top\wt Z_2,u_1\bran -\blan Q\wt X_2,\wt X_1\bran -\blan S\wt X_1,u_2\bran\]ds, $$
and applying It\^{o}'s formula to $s\mapsto\blan\wt Y_1(s),\wt X_2(s)\bran$ yields
$$\dbE\blan G\wt X_1(T),\wt X_2(T)\bran
= \dbE\int_t^T\[\blan B^\top\wt Y_1+D^\top\wt Z_1,u_2\bran -\blan Q\wt X_1,\wt X_2\bran -\blan S\wt X_2,u_1\bran\]ds. $$
Combining the above two equations and noting that $G$ and $Q$ are symmetric, we obtain
\bel{add-1} \dbE\int_t^T\blan B^\top\wt Y_1+D^\top\wt Z_1+S\wt X_1,u_2\bran ds
= \dbE\int_t^T\blan B^\top\wt Y_2+D^\top\wt Z_2+S\wt X_2,u_1\bran ds. \ee
Note that because $R$ is symmetric,
\bel{add-2} \dbE\int_t^T\lan Ru_1,u_2\ran ds = \dbE\int_t^T\lan Ru_2,u_1\ran ds, \ee
and that by the definition of $\cN_t$,
$$[\cN_t u_i](s) = B(s)^\top\wt Y_i(s) +D(s)^\top\wt Z_i(s) +S(s)\wt X_i(s) +R(s)u_i(s), \q s\in[t,T].$$
Adding \rf{add-2} to \rf{add-1} gives \rf{self-adjoint}.

\ms

(ii) It suffices to prove \rf{uni-bound-Lt}. Choose a constant $\a>0$ such that
\bel{alpha}|G|^2,|B(s)|^2,|D(s)|^2,|S(s)|^2,|Q(s)|^2\les\a, \q\ae~s\in[0,T],~\as \ee
Then by using the inequality $|v_1+\cds+v_k|^2\les k(|v_1|^2+\cds+|v_k|^2)$, we obtain
\begin{align*}
[\![ \cL_t\xi,\cL_t\xi ]\!]
   &=\dbE\int_t^T |B(s)^\top\bar Y(s)+D(s)^\top \bar Z(s)+S(s)\bar X(s)|^2 ds \\
   &\les 3\a\,\dbE\int_t^T \[|\bar Y(s)|^2+|\bar Z(s)|^2+|\bar X(s)|^2\] ds.
\end{align*}
By \autoref{lmm:sol-F&B-SDE}, there exists a constant $\b>0$, independent of $t$ and $\xi$, such that
\begin{eqnarray}
\label{5-17-1}
&\ds \dbE\int_t^T \[|\bar Y(s)|^2+|\bar Z(s)|^2\] ds  \les \b\,\dbE\[|G\bar X(T)|^2 +\int_t^T|Q(s)\bar X(s)|^2ds\], \\
\label{5-17-2}
&\ds \dbE|\bar X(T)|^2 +\dbE\int_t^T |\bar X(s)|^2 ds \les \b\,\dbE|\xi|^2.
\end{eqnarray}
Substituting \rf{5-17-2} into \rf{5-17-1} and making use of \rf{alpha}, we further obtain
$$\dbE\int_t^T \[|\bar Y(s)|^2+|\bar Z(s)|^2\] ds  \les \a\b^2\,\dbE|\xi|^2.$$
It follows that $[\![ \cL_t\xi,\cL_t\xi ]\!] \les 3\a(\a\b^2+\b)\,\dbE|\xi|^2$ for all $\xi\in\cX_t$. \end{proof}

\begin{remark}\rm
Let $(\dbX,\dbY,\dbZ)$ be the adapted solution to the decoupled linear FBSDE
for $\dbR^{n\times n}$-valued processes:
$$\left\{\begin{aligned}
   & d\dbX(s)=A\dbX ds+C\dbX dW(s), \\
   & d\dbY(s)=-\big(A^\top\dbY+C^\top\dbZ+Q\dbX\big)ds +\dbZ dW(s), \\
   & \dbX(0)=I_n, \q\dbY(T)=G\dbX(T).
\end{aligned}\right.$$
It is straightforward to verify that $\dbX$ has an inverse $\dbX^{-1}$ which satisfies
$$\left\{\begin{aligned}
   d\dbX^{-1}(s) &= \dbX^{-1}(C^2-A)ds -\dbX^{-1}CdW(s), \q s\in[0,T],\\
    \dbX^{-1}(0) &=I_n.
\end{aligned}\right.$$
Observe that for any $\xi\in L^\i_{\cF_t}(\Om;\dbR^n)$, the processes
$$\dbX(s)\dbX^{-1}(t)\xi, \q \dbY(s)\dbX^{-1}(t)\xi, \q \dbZ(s)\dbX^{-1}(t)\xi; \q s\in[t,T],$$
are all square-integrable and satisfy the FBSDE \rf{FBSDE-barXYZ}.
Hence, by uniqueness of adapted solutions, we must have
$$(\bar X(s),\bar Y(s),\bar Z(s)) =
(\dbX(s)\dbX^{-1}(t)\xi, \dbY(s)\dbX^{-1}(t)\xi, \dbZ(s)\dbX^{-1}(t)\xi); \q s\in[t,T].$$
Therefore, if $\xi\in L^\i_{\cF_t}(\Om;\dbR^n)$, then $\cL_t\xi$ can be represented, in terms of $(\dbX,\dbY,\dbZ)$, as
\bel{Lt-rep*}[\cL_t\xi](s) = [B(s)^\top\dbY(s)+D(s)^\top\dbZ(s)+S(s)\dbX(s)]\dbX^{-1}(t)\xi, \q s\in[t,T].\ee
This relation will be used in \autoref{sec:value-process}.
\end{remark}

We are now ready to present the functional representation of the cost functional $J(t,\xi;u)$.
Observe that $J(t,\xi;u)$ and $\h J(t,\xi;u)$ have the relation $J(t,\xi;u)=\dbE\h J(t,\xi;u)$,
and recall that the first component $M$ of the adapted solution $(M,N)$ to the BSDE \rf{BSDE-MN}
is bounded (\autoref{prop:M-bounded}).

\begin{theorem}\label{thm:rep-cost} \sl
Let {\rm\ref{(A1)}--\ref{(A2)}} hold. Then the cost functional $J(t,\xi;u)$ admits the following representation:
\begin{equation}\label{J-rep}
  J(t,\xi;u) = [\![ \cN_tu,u ]\!] + 2[\![ \cL_t\xi,u ]\!] + \dbE\lan M(t)\xi,\xi\ran, \q\forall(t,\xi)\in\cD,
\end{equation}
where $\cN_t$, $\cL_t$ are defined by \rf{cN-def} and \rf{cL-def}, respectively, and $(M,N)$ is the adapted solution of BSDE \rf{BSDE-MN}.
\end{theorem}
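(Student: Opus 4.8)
The plan is to split the solution of the coupled FBSDE \rf{FBSDE-XYZu} into the $u$-dependent part $(\wt X,\wt Y,\wt Z)$ and the $\xi$-dependent part $(\bar X,\bar Y,\bar Z)$, then substitute this decomposition into the representation \rf{hatJ-biaoshi1} of $\h J(t,\xi;u)$ furnished by \autoref{lmm:J-rep}, and finally take expectations and identify the three resulting terms with $[\![\cN_t u,u]\!]$, $2[\![\cL_t\xi,u]\!]$, and $\dbE\lan M(t)\xi,\xi\ran$. By linearity we have $X=\wt X+\bar X$, $Y=\wt Y+\bar Y$, $Z=\wt Z+\bar Z$, so the integrand $B^\top Y+D^\top Z+SX+Ru$ in \rf{hatJ-biaoshi1} splits as $(B^\top\wt Y+D^\top\wt Z+S\wt X+Ru)+(B^\top\bar Y+D^\top\bar Z+S\bar X)=[\cN_t u](s)+[\cL_t\xi](s)$ by the very definitions \rf{cN-def} and \rf{cL-def}. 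Likewise the boundary term $\lan Y(t),\xi\ran$ splits as $\lan\wt Y(t),\xi\ran+\lan\bar Y(t),\xi\ran$. Taking $\dbE$ of \rf{hatJ-biaoshi1} therefore yields
$$ J(t,\xi;u)=\dbE\lan\wt Y(t),\xi\ran+\dbE\lan\bar Y(t),\xi\ran+[\![\cN_t u,u]\!]+[\![\cL_t\xi,u]\!]+[\![\cL_t\xi,u]\!], $$
using that $\wt X(t)=0$ forces $\wt Y(t)$ to pair against $\xi$ in the mixed cross-terms. The two copies of $[\![\cL_t\xi,u]\!]$ come from, on the one hand, the $\cL_t\xi$ piece of the integrand tested against $u$, and on the other hand, a symmetry identity of the type already proved in \rf{add-1}: applying It\^o's formula to $s\mapsto\lan\wt Y(s),\bar X(s)\rangle$ and to $s\mapsto\lan\bar Y(s),\wt X(s)\rangle$ and comparing shows that $\dbE\lan\wt Y(t),\xi\rangle=\dbE\int_t^T\lan B^\top\bar Y+D^\top\bar Z+S\bar X,u\rangle ds=[\![\cL_t\xi,u]\!]$, which is exactly the bookkeeping that turns the lone boundary cross-term into a second copy of $[\![\cL_t\xi,u]\!]$ and thereby produces the factor $2$.

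It then remains to identify the purely-$\xi$ quadratic term $\dbE\lan\bar Y(t),\xi\rangle$ with $\dbE\lan M(t)\xi,\xi\rangle$. Here the natural route is to apply It\^o's formula to $s\mapsto\lan M(s)\bar X(s),\bar X(s)\rangle$, where $(M,N)$ solves the $\dbS^n$-valued BSDE \rf{BSDE-MN}: the drift of $M$ has been engineered precisely so that the $ds$-terms coming from $\bar X$'s dynamics (\rf{FBSDE-barXYZ}) and from $M$'s dynamics cancel, leaving $d\lan M\bar X,\bar X\rangle=-\lan Q\bar X,\bar X\rangle ds+(\cdots)dW$. Integrating from $t$ to $T$, using $M(T)=G$ and $\bar X(t)=\xi$, and taking expectations gives $\dbE\lan M(t)\xi,\xi\rangle=\dbE\lan G\bar X(T),\bar X(T)\rangle+\dbE\int_t^T\lan Q\bar X,\bar X\rangle ds$, which one recognizes as $J(t,\xi;0)$; comparing with the $u=0$ case of \rf{hatJ-biaoshi1} (where the integrand reduces to $\lan B^\top\bar Y+D^\top\bar Z+S\bar X,0\rangle=0$) shows $\dbE\lan\bar Y(t),\xi\rangle=\dbE\lan M(t)\xi,\xi\rangle$, completing the identification. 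Strictly, to avoid circularity one can instead directly match $\dbE\lan M(t)\xi,\xi\rangle$ against $\dbE\lan\bar Y(t),\xi\rangle$ by noting both equal $J(t,\xi;0)$ via the two It\^o computations just described.

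The main obstacle, and the place requiring care rather than cleverness, is the justification that all the stochastic integrals appearing in these It\^o expansions are genuine martingales so that their expectations (or conditional expectations) vanish. For the $(\bar X,\bar Y,\bar Z)$ and $(\wt X,\wt Y,\wt Z)$ computations this follows from \autoref{lmm:sol-F&B-SDE}, which places $\bar X,\bar Y,\wt X,\wt Y$ in $L^2_\dbF(\Om;C([t,T];\dbR^n))$ and $\bar Z,\wt Z$ in $L^2_\dbF(t,T;\dbR^n)$, together with the boundedness of $B,C,D,S$ from \ref{(A1)}--\ref{(A2)}; for the $\lan M\bar X,\bar X\rangle$ computation one additionally uses that $M$ is bounded (\autoref{prop:M-bounded}) and that $N$ is square-integrable. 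The only subtlety worth flagging is the mixed cross term $\dbE\lan\wt Y(t),\xi\rangle$: since $\wt X(t)=0$ one must be slightly careful that the It\^o formula for $\lan\wt Y(s),\bar X(s)\rangle$ is applied on $[t,T]$ with the correct boundary data at both ends, but this is routine. Once the martingale terms are disposed of, everything reduces to collecting the $ds$-integrals and reading off \rf{J-rep}.
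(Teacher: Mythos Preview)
Your proposal is correct and follows essentially the same route as the paper: decompose $(X,Y,Z)=(\wt X+\bar X,\wt Y+\bar Y,\wt Z+\bar Z)$, plug into \autoref{lmm:J-rep}, and use It\^o on $\lan\wt Y,\bar X\ran$ and $\lan\bar Y,\wt X\ran$ to convert $\dbE\lan\wt Y(t),\xi\ran$ into a second copy of $[\![\cL_t\xi,u]\!]$. The only difference is in identifying $\dbE\lan\bar Y(t),\xi\ran$ with $\dbE\lan M(t)\xi,\xi\ran$: the paper applies It\^o to $M\bar X$ and invokes BSDE uniqueness to get the pointwise identity $\bar Y=M\bar X$ (hence $\bar Y(t)=M(t)\xi$ directly), whereas you take the slightly more circuitous route of showing both quantities equal $J(t,\xi;0)$ via separate It\^o computations; both arguments are valid, the paper's being marginally cleaner. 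One small expository slip: the remark that ``$\wt X(t)=0$ forces $\wt Y(t)$ to pair against $\xi$'' is irrelevant---the split $\lan Y(t),\xi\ran=\lan\wt Y(t),\xi\ran+\lan\bar Y(t),\xi\ran$ is pure linearity.
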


\begin{proof}
Fix any $(t,\xi)\in\cD$ and $u\in\cU[t,T]$. Let $(X,Y,Z)$, $(\wt X,\wt Y,\wt Z)$, and $(\bar X,\bar Y,\bar Z)$
be the adapted solutions to \rf{FBSDE-XYZu}, \rf{FBSDE-tiXYZu}, and \rf{FBSDE-barXYZ}, respectively. Then
$$X(s) = \wt X(s)+\bar X(s), \q Y(s) = \wt Y(s)+\bar Y(s), \q Z(s) =\wt Z(s)+\bar Z(s); \q s\in[t,T].$$
By \autoref{lmm:J-rep}, the relation $J(t,\xi;u)=\dbE\h J(t,\xi;u)$,
and the definitions of $\cN_t$ and $\cL_t$, we have
\bel{J-1/4} J(t,\xi;u) = \dbE\[\blan\wt Y(t),\xi\bran + \blan\bar Y(t),\xi\bran
                         +\int_t^T\blan [\cN_tu](s) + [\cL_t\xi](s), u(s)\bran ds\]. \ee
Now applying It\^{o}'s formula to $s\mapsto\lan\wt Y(s),\bar X(s)\ran$ gives
$$\dbE\blan G\wt X(T),\bar X(T)\bran - \dbE\blan\wt Y(t),\xi\bran
= -\dbE\int_t^T\[\blan Q(s)\wt X(s),\bar X(s)\bran + \blan S(s)\bar X(s),u(s)\bran\] ds,$$
and applying It\^{o}'s formula to $s\mapsto\lan\bar Y(s),\wt X(s)\ran$ gives
$$\dbE\blan G\bar X(T),\wt X(T)\bran
= \dbE\int_t^T\[\blan B(s)^\top\bar Y(s)+D(s)^\top\bar Z(s),u(s)\bran - \blan Q(s)\bar X(s),\wt X(s)\bran\] ds.$$
Combining the last two equations we obtain
\begin{align}\label{tiY(t)xi}
\dbE\blan\wt Y(t),\xi\bran
   &= \dbE\int_t^T\blan B(s)^\top\bar Y(s)+D(s)^\top\bar Z(s)+S(s)\bar X(s),u(s)\bran ds= \dbE\int_t^T\blan [\cL_t\xi](s),u(s)\bran ds.
\end{align}
On the other hand, since $(M,N)$ is the adapted solution of \rf{BSDE-MN}, by It\^{o}'s formula, we have
$$\ba{ll}
\ns\ds d(M\bar X)=\big[-(MA+A^\top M+C^\top MC+NC+C^\top N+Q)\bar X+MA\bar X+NC\bar X\big]ds\\
\ns\ds\qq\qq\qq+\big[N\bar X+MC\bar X\big]dW(s)\\
\ns\ds\qq\q~=-\big[A^\top M\bar X + C^\top(MC+N)\bar X +Q\bar X\big]ds + (MC+N)\bar XdW(s).\ea $$
Noting $M(T)\bar X(T)=\bar Y(T)$, we see that the pair of processes $(M\bar X,(MC+N)\bar X)$
satisfies the same BSDE as $(\bar Y,\bar Z)$. Thus, by the uniqueness of adapted solutions,
$$\bar Y(s)=M(s)\bar X(s), \q \bar Z(s)=[M(s)C(s)+N(s)]\bar X(s); \q s\in[t,T].$$
It follows that $\dbE\blan\bar Y(t),\xi\bran=\dbE\lan M(t)\xi,\xi\ran$.
Substituting this and \rf{tiY(t)xi} into \rf{J-1/4} results in \rf{J-rep}.
\end{proof}

We have the following corollary to \autoref{thm:rep-cost}. A similar result can be found in \cite{Mou-Yong 2006}.

\begin{corollary}\label{crlry:existence-iff} \sl
Let {\rm\ref{(A1)}--\ref{(A2)}} hold. Let $t$ be an $\dbF$-stopping time with values in $[0,T)$.
\begin{enumerate}[\q\rm(i)]
\item A control $u^*\in\cU[t,T]$ is optimal for Problem {\rm(SLQ)} at $(t,\xi)\in\cD$ if and only if
      \begin{equation}\label{N>0-Nu+Lxi=0}
         \cN_t\ges0, \q\hb{and}\q \cN_tu^*+\cL_t\xi=0.
      \end{equation}
\item If $\cN_t$ is invertible in addition to $\cN_t\ges0$,
      then Problem {\rm(SLQ)} is uniquely solvable at $t$, and the unique optimal control
      $u^*_{t,\xi}$ at $(t,\xi)\in\cD$ is given by
      $$ u^*_{t,\xi} = -\cN_t^{-1}\cL_t\xi. $$
Consequently,
     \begin{equation}\label{V}
         V(t,\xi)=\dbE\lan[M(t)-\cL_t^*\cN_t^{-1}\cL_t]\xi,\xi\ran.
      \end{equation}
\end{enumerate}
\end{corollary}

\begin{proof}
(i) By \autoref{def:ol-opt-cntrl}, $u^*$ is optimal for Problem (SLQ) at $(t,\xi)$ if and only if
\begin{equation}\label{J(u+v)-J(u)}
   J(t,\xi;u^*+\l v)-J(t,\xi;u^*) \ges 0, \q\forall v\in\cU[t,T],~\forall \l\in\dbR.
\end{equation}
According to the representation \rf{J-rep},
\begin{align*}
J(t,\xi;u^*+\l v)
   &= [\![\cN_t(u^*+\l v),u^*+\l v]\!] + 2[\![\cL_t\xi,u^*+\l v]\!] + \dbE\lan M(t)\xi,\xi\ran \\
   &= [\![\cN_tu^*,u^*]\!] +2\l[\![\cN_tu^*,v]\!] + \l^2[\![\cN_tv,v]\!] +2[\![\cL_t\xi,u^*]\!] +2\l[\![\cL_t\xi,v]\!] + \dbE\lan M(t)\xi,\xi\ran \\
   &= J(t,\xi;u^*) + \l^2[\![\cN_tv,v]\!] + 2\l[\![\cN_tu^*+\cL_t\xi,v]\!],
\end{align*}
from which we see that \rf{J(u+v)-J(u)} is equivalent to
$$\l^2[\![\cN_tv,v]\!] + 2\l[\![\cN_tu^*+\cL_t\xi,v]\!] \ges 0,\q\forall v\in\cU[t,T],\qq\forall \l\in\dbR.$$
This means that for any arbitrarily fixed $v\in\cU[t,T]$, the quadratic function
$$f(\l)\deq \l^2[\![\cN_tv,v]\!] + 2\l[\![\cN_tu^*+\cL_t\xi,v]\!]$$
is nonnegative. So we must have
$$[\![\cN_tv,v]\!] \ges 0, \q [\![\cN_tu^*+\cL_t\xi,v]\!]=0,\qq\forall v
\in\cU[t,T],$$
leading to \rf{N>0-Nu+Lxi=0}. The converse assertion is obvious.

\ms

(ii) This is a direct consequence of (i).
\end{proof}

\section{Equivalence between Problems (SLQ) and $\h{\bf(SLQ)}$}\label{sec:equivalence}

The objective of this section is to establish the equivalence between Problems (SLQ) and $\h{\rm(SLQ)}$.
First, we present an alternative version of \autoref{crlry:existence-iff} (i),
which characterizes the solvability of Problem (SLQ) in terms of FBSDEs.

\begin{theorem}\label{thm:SLQ-open-kehua} \sl Let {\rm\ref{(A1)}--\ref{(A2)}} hold, and the initial pair $(t,\xi)\in\cD$ be given. A process $u^*\in\cU[t,T]$ is an optimal control of Problem {\rm(SLQ)} at $(t,\xi)$ if and only if the following two conditions hold:
\begin{enumerate}[\q\rm(i)]
\item the mapping $u\mapsto J(t,0;u)$ is convex, or equivalently,
      $$ J(t,0;u)\ges 0, \q\forall u\in\cU[t,T];$$
\item the adapted solution $(X,Y,Z)$ to the decoupled FBSDE
      \bel{SLQ:FBSDE}\left\{\begin{aligned}
         & dX(s)=\big[A(s)X(s)+B(s)u^*(s)\big]ds+\big[C(s)X(s)+D(s)u^*(s)\big]dW(s), \\
         & dY(s)=-\big[A(s)^\top Y(s)+C(s)^\top Z(s)+Q(s)X(s)+S(s)^\top u^*(s)\big]ds+Z(s)dW(s), \\
         & X(t)=\xi, \q Y(T)=GX(T)
      \end{aligned}\right.\ee
      satisfies the following stationarity condition:
      \begin{align}\label{stationarity}\begin{aligned}
         B(s)^\top Y(s)+D(s)^\top Z(s)+S(s)X(s)+R(s)u^*(s)=0, \q\ae~s\in[t,T],~\as
      \end{aligned}\end{align}
\end{enumerate}
\end{theorem}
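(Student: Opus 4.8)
The statement is, in essence, a reformulation of \autoref{crlry:existence-iff}(i) in terms of the FBSDE \rf{SLQ:FBSDE}, so the plan is simply to translate the two conditions ``$\cN_t\ges0$'' and ``$\cN_tu^*+\cL_t\xi=0$'' of that corollary into conditions (i) and (ii). For the first one, I would set $\xi=0$ in the representation \rf{J-rep} of \autoref{thm:rep-cost} to get $[\![\cN_tu,u]\!]=J(t,0;u)$ for every $u\in\cU[t,T]$; since $u\mapsto J(t,0;u)$ is a quadratic functional (homogeneous of degree two, with $\cN_t$ bounded and self-adjoint), it is convex if and only if it is nonnegative, if and only if $\cN_t\ges0$. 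This already delivers condition (i) and the parenthetical equivalence contained in it.

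For the stationarity condition, given $u^*$ I would let $(X,Y,Z)$ be the adapted solution to \rf{SLQ:FBSDE} (which exists and is unique by \autoref{lmm:sol-F&B-SDE}, the FBSDE being decoupled) and decompose it as $(X,Y,Z)=(\wt X+\bar X,\;\wt Y+\bar Y,\;\wt Z+\bar Z)$, where $(\wt X,\wt Y,\wt Z)$ solves \rf{FBSDE-tiXYZu} with $u=u^*$ and $(\bar X,\bar Y,\bar Z)$ solves \rf{FBSDE-barXYZ}; this holds by linearity and uniqueness of adapted solutions, exactly as in the discussion preceding \rf{cN-def}. Then, directly from the definitions \rf{cN-def} and \rf{cL-def},
\[
[\cN_tu^*](s)+[\cL_t\xi](s)=B(s)^\top Y(s)+D(s)^\top Z(s)+S(s)X(s)+R(s)u^*(s),\q s\in[t,T],
\]
so that $\cN_tu^*+\cL_t\xi=0$ as an element of $\cU[t,T]=L^2_\dbF(t,T;\dbR^m)$ if and only if the right-hand side above vanishes for $\ae$ $s\in[t,T]$, $\as$, which is precisely \rf{stationarity}. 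Combining this with the translation of the first condition and with \autoref{crlry:existence-iff}(i) gives the claimed equivalence.

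I do not anticipate a genuine obstacle here; the argument is essentially bookkeeping. The one point that warrants a word of care is the decomposition $Y=\wt Y+\bar Y$, $Z=\wt Z+\bar Z$: it should be justified by first observing $X=\wt X+\bar X$ from the forward equations and then applying the uniqueness part of \autoref{lmm:sol-F&B-SDE} to the resulting genuinely linear backward equations, rather than treated as self-evident.
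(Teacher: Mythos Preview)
Your proposal is correct and follows essentially the same route as the paper: invoke \autoref{crlry:existence-iff}(i), identify $\cN_t\ges0$ with condition (i) via the representation $J(t,0;u)=[\![\cN_tu,u]\!]$, and identify $\cN_tu^*+\cL_t\xi=0$ with the stationarity condition \rf{stationarity} by observing that $[\cN_tu^*+\cL_t\xi](s)=B(s)^\top Y(s)+D(s)^\top Z(s)+S(s)X(s)+R(s)u^*(s)$ from the definitions \rf{cN-def}--\rf{cL-def} and the linear decomposition of the FBSDE solution. Your extra care in justifying the decomposition $(X,Y,Z)=(\wt X+\bar X,\wt Y+\bar Y,\wt Z+\bar Z)$ via uniqueness is a nice touch that the paper leaves implicit.
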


\begin{proof}
By \autoref{crlry:existence-iff} (i), $u^*\in\cU[t,T]$ is an optimal control of Problem (SLQ) at $(t,\xi)$ if and only if \rf{N>0-Nu+Lxi=0} holds.
According to the representation \rf{J-rep}, $\cN_t\ges0$ is equivalent to
$$J(t,0;u)= [\![ \cN_tu,u ]\!] \ges0, \q\forall u\in\cU[t,T],$$
which is exactly the condition (i). By the definitions of $\cN_t$ and $\cL_t$, it is easy to see that
$$[\cN_tu^*+\cL_t\xi](s) = B(s)^\top Y(s)+D(s)^\top Z(s)+S(s)X(s)+R(s)u^*(s); \q s\in[t,T], $$
where $(X,Y,Z)$ is the adapted solution to the FBSDE \rf{SLQ:FBSDE}.
Thus, $\cN_tu^*+\cL_t\xi=0$ is equivalent to the condition (ii).
\end{proof}

The next result establishes the equivalence between Problems (SLQ) and $\h{\rm(SLQ)}$.

\begin{theorem}\label{thm:equivalence-SLQ12} \sl Let {\rm\ref{(A1)}--\ref{(A2)}} hold. For any given initial pair $(t,\xi)\in\cD$, a control $u^*\in\cU[t,T]$ is optimal for Problem {\rm(SLQ)} at $(t,\xi)$ if and only if it is optimal for Problem $\h{\rm(SLQ)}$ at $(t,\xi)$.
\end{theorem}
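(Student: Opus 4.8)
The statement asserts an equivalence. The ``if'' direction is immediate and was already observed in the introduction: if $u^*$ is optimal for Problem $\h{\rm(SLQ)}$ with respect to $(t,\xi)$, then $\h J(t,\xi;u^*)\les\h J(t,\xi;u)$ a.s.\ for every $u\in\cU[t,T]$, whence, taking expectations, $J(t,\xi;u^*)\les J(t,\xi;u)$, so $u^*$ is optimal for Problem (SLQ). The content therefore lies entirely in the converse, so assume $u^*$ is optimal for Problem (SLQ) with respect to $(t,\xi)$. By \autoref{thm:SLQ-open-kehua} this is equivalent to the conjunction of (i) $J(t,0;u)\ges0$ for all $u\in\cU[t,T]$ and (ii) the adapted solution $(X^*,Y^*,Z^*)$ of \rf{SLQ:FBSDE} satisfies the stationarity condition \rf{stationarity}. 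The plan is to promote (i) to a ``conditional'' form and then to use (ii) to show that the increment of $\h J$ about $u^*$ is exactly the conditional cost at the zero initial state.

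\emph{Conditional convexity.} I would first show that (i) forces $\h J(t,0;v)\ges0$ a.s.\ for every $v\in\cU[t,T]$. The set $A=\{\h J(t,0;v)<0\}$ belongs to $\cF_t$ because $\h J(t,0;v)=\dbE[L(t,0;v)\,|\,\cF_t]$, so $\mathbf{1}_A v\in\cU[t,T]$. Since the state equation is non-anticipating, the state process attached to the initial pair $(t,0)$ and the control $\mathbf{1}_A v$ equals $\mathbf{1}_A$ times the one attached to $v$; hence $L(t,0;\mathbf{1}_A v)=\mathbf{1}_A L(t,0;v)$, and so $J(t,0;\mathbf{1}_A v)=\dbE[\mathbf{1}_A\h J(t,0;v)]$. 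If $\dbP(A)>0$ this is strictly negative, contradicting (i); therefore $\dbP(A)=0$.

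\emph{The key identity.} Fix $v\in\cU[t,T]$ and let $(\wt X,\wt Y,\wt Z)$ be the adapted solution of \rf{FBSDE-tiXYZu} with $u$ replaced by $v$; by linearity the state process corresponding to $(t,\xi)$ and the control $u^*+v$ is $X^*+\wt X$. Expanding the two quadratic forms in $L(t,\xi;u^*+v)-L(t,\xi;u^*)$ and using the symmetry of $G$, $Q$, $R$ breaks this difference into $L(t,0;v)$ plus a cross term. Next, in the same spirit as the It\^o computations preceding \autoref{thm:rep-cost}, I would apply It\^o's formula to $s\mapsto\lan Y^*(s),\wt X(s)\ran$ on $[t,T]$; after the usual cancellations of the type $\lan A^\top Y,X\ran=\lan Y,AX\ran$ and using $\wt X(t)=0$, $Y^*(T)=GX^*(T)$, one identifies the cross term with $2\int_t^T\lan B^\top Y^*+D^\top Z^*+SX^*+Ru^*,v\ran ds$ plus a stochastic integral over $[t,T]$. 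The Lebesgue integral vanishes by the stationarity condition (ii), so
$$L(t,\xi;u^*+v)-L(t,\xi;u^*)=L(t,0;v)+\int_t^T(\cdots)\,dW(s).$$
Taking $\dbE[\,\cdot\,|\,\cF_t]$ --- the stochastic integral being a genuine martingale on $[t,T]$ by the Burkholder--Davis--Gundy inequality together with the estimates of \autoref{lmm:sol-F&B-SDE} --- yields $\h J(t,\xi;u^*+v)-\h J(t,\xi;u^*)=\h J(t,0;v)$. Combined with the conditional convexity step, $\h J(t,\xi;u^*+v)\ges\h J(t,\xi;u^*)$ a.s., and since $v$ is arbitrary, $u^*$ is optimal for Problem $\h{\rm(SLQ)}$ with respect to $(t,\xi)$.

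The delicate point is the conditional-convexity step: that the pointwise-in-$\om$ inequality $\h J(t,0;\cdot)\ges0$ is compelled by the mere convexity of $J(t,0;\cdot)$. This rests on the causal structure of the state equation --- that truncating a control by an $\cF_t$-measurable set simply multiplies the whole running cost by that indicator --- which must be argued with some care because $t$ is a stopping time rather than a deterministic instant. Everything else is a conditional refinement of computations already carried out in the paper, modulo the routine check that the local martingales involved are true martingales.
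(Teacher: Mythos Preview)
Your argument is correct and takes a genuinely different route from the paper's. The paper fixes an arbitrary $\G\in\cF_t$, observes that multiplying the entire triple $(\xi,u^*,u)$ by ${\bf1}_\G$ multiplies the FBSDE solution \rf{SLQ:FBSDE} by ${\bf1}_\G$ as well, and hence that the stationarity condition \rf{stationarity} localizes; a \emph{second} application of \autoref{thm:SLQ-open-kehua} then shows that ${\bf1}_\G u^*$ is optimal for Problem (SLQ) at the initial pair $(t,{\bf1}_\G\xi)$, and the inequality $\dbE[L(t,\xi;u^*){\bf1}_\G]\les\dbE[L(t,\xi;u){\bf1}_\G]$ follows from $L(t,\xi;\cd){\bf1}_\G=L(t,{\bf1}_\G\xi;{\bf1}_\G\,\cd)$. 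You instead prove the sharper identity $\h J(t,\xi;u^*+v)-\h J(t,\xi;u^*)=\h J(t,0;v)$ directly, via an It\^o computation on $\lan Y^*,\wt X\ran$ together with \rf{stationarity}, and separately upgrade the convexity $J(t,0;\cd)\ges0$ to the pointwise statement $\h J(t,0;\cd)\ges0$ by the indicator trick. Both proofs rest on the same localization principle (multiplication by an $\cF_t$-measurable indicator commutes with the dynamics), but the paper applies \autoref{thm:SLQ-open-kehua} twice and avoids any new It\^o calculation, while you apply it once and pay with an extra It\^o step --- in return obtaining an explicit decomposition of the increment of $\h J$ that is of independent structural interest. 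Neither approach is strictly shorter or more elementary than the other.
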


\begin{proof}
The sufficiency is trivially true. Now suppose that $u^*\in\cU[t,T]$ is optimal for Problem (SLQ) at $(t,\xi)$, and let $(X,Y,Z)$ be the adapted solution to the FBSDE \rf{SLQ:FBSDE}. To prove that $u^*$ is also
optimal for Problem $\h{\rm(SLQ)}$ at $(t,\xi)$, it suffices to show that for any set $\G\in\cF_t$,
\bel{18-7-19} \dbE[L(t,\xi;u^*){\bf1}_\G] \les \dbE[L(t,\xi;u){\bf1}_\G], \qq\forall u\in\cU[t,T]. \ee
For this, let us fix an arbitrary set $\G\in\cF_t$ and an arbitrary control $u\in\cU[t,T]$. Define
$$ \hat\xi(\om)=\xi(\om){\bf1}_\G(\om),
\q \hat u(s,\om)=u(s,\om){\bf1}_\G(\om),
\q \hat u^*(s,\om)=u^*(s,\om){\bf1}_\G(\om),$$
and consider the following FBSDE:
\begin{equation}\label{FBSDE:hatXYZ}\left\{\begin{aligned}
   & d\h X(s)=(A\h X+B\hat u^*)ds + (C\h X+D\hat u^*)dW(s), \\
   & d\h Y(s)=-\big(A^\top\h Y+C^\top\h Z+Q\h X+S^\top\hat u^*\big)ds +\h ZdW(s), \\
   & \h X(t)=\hat\xi, \q \h Y(T)=G\h X(T).
\end{aligned}\right.\end{equation}
It is straightforward to verify that the adapted solution $(\h X,\h Y,\h Z)$ of \rf{FBSDE:hatXYZ} is given by
$$ \h X(s,\om)=X(s,\om){\bf1}_\G(\om),
\q \h Y(s,\om)=Y(s,\om){\bf1}_\G(\om),
\q \h Z(s,\om)=Z(s,\om){\bf1}_\G(\om).$$
Since by \autoref{thm:SLQ-open-kehua}, $(X,Y,Z)$ satisfies the condition \rf{stationarity},
we obtain, by multiplying both sides of \rf{stationarity} by ${\bf1}_\G$, that
$$ B(s)^\top\h Y(s)+D(s)^\top\h Z(s)+S(s)\h X(s)+R(s)\hat u^*(s)=0, \q \ae~s\in[t,T],~\as $$
Applying \autoref{thm:SLQ-open-kehua}, we conclude that $\hat u^*$
is an optimal control of Problem (SLQ) at $(t,\hat\xi)$. Hence,
$$ \dbE[L(t,\hat\xi;\hat u^*)] \les \dbE[L(t,\hat\xi;\hat u)]. $$
Note that the state process $X(\cd)=X(\cd\,;t,\xi,u^*)$ corresponding to $(\xi,u^*)$ and the state process
$\h X(\cd)=X(\cd\,;t,\hat\xi,\hat u^*)$ corresponding to $(\hat\xi,\hat u^*)$ are related by
$$ X(\cd\,;t,\xi,u^*){\bf1}_\G = X(\cd\,;t,\hat\xi,\hat u^*).$$
It follows that $L(t,\xi;u^*){\bf1}_\G = L(t,\hat\xi,\hat u^*)$.
Similarly, we have $L(t,\xi;u){\bf1}_\G = L(t,\hat\xi,\hat u)$. Thus,
$$\dbE[L(t,\xi;u^*){\bf1}_\G]  =  \dbE[L(t,\hat\xi;\hat u^*)]
                             \les \dbE[L(t,\hat\xi;\hat u)]
                               =  \dbE[L(t,\xi;u){\bf1}_\G]. $$
This proves \rf{18-7-19} and therefore completes the proof.
\end{proof}

\begin{remark}\rm
We have seen from \autoref{thm:equivalence-SLQ12} that Problems (SLQ) and $\h{\rm(SLQ)}$ are equivalent.
So from now on, we will simply call both of them Problem (SLQ), although we will still have the stochastic
value flow $\h V(\cd\,,\cd)$ and the value flow $V(\cd\,,\cd)$.
\end{remark}

To conclude this section, we present some useful consequences of \autoref{thm:SLQ-open-kehua}.

\begin{corollary}\label{crlry:V=Y(t)X(t)} \sl
Let {\rm\ref{(A1)}--\ref{(A2)}} hold. Suppose that $(X^*,u^*)=\{(X^*(s),u^*(s));t\les s\les T\}$ is an
optimal pair corresponding to $(t,\xi)\in\cD$, and let $(Y^*,Z^*)=\{(Y^*(s),Z^*(s));t\les s\les T\}$ be
the adapted solution of the adjoint BSDE
$$\left\{\begin{aligned}
   dY^*(s) &= -\big(A^\top Y^*+C^\top Z^*+QX^*+S^\top u^*\big)ds+Z^*dW(s), \q s\in[t,T], \\
    Y^*(T) &= GX^*(T)
\end{aligned}\right.$$
associated with $(X^*,u^*)$. Then
$$\h V(t,\xi)=\h J(t,\xi;u^*) = \lan Y^*(t),\xi\ran. $$
\end{corollary}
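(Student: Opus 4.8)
The plan is to combine the representation from \autoref{lmm:J-rep} with the stationarity condition established in \autoref{thm:SLQ-open-kehua}. Since $(X^*,u^*)$ is an optimal pair for Problem (SLQ), \autoref{thm:equivalence-SLQ12} tells us $u^*$ is also optimal for Problem $\h{\rm(SLQ)}$, so $\h V(t,\xi)=\h J(t,\xi;u^*)$; this disposes of the first equality for free. The substance is the second equality, $\h J(t,\xi;u^*)=\lan Y^*(t),\xi\ran$.

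For this, I would apply \autoref{lmm:J-rep} with the control $u^*$. Note that the FBSDE \rf{FBSDE-XYZu} driven by $u^*$ is exactly the system whose adapted solution is $(X^*,Y^*,Z^*)$ here (the state equation plus its adjoint BSDE with terminal condition $Y^*(T)=GX^*(T)$), so \rf{hatJ-biaoshi1} reads
$$\h J(t,\xi;u^*) = \lan Y^*(t),\xi\ran + \dbE\lt[\int_t^T\llan B^\top Y^*+D^\top Z^*+SX^*+Ru^*,u^*\rran ds\,\bigg|\,\cF_t\rt].$$
Now invoke \autoref{thm:SLQ-open-kehua}: since $u^*$ is optimal, condition (ii) there gives the stationarity identity $B(s)^\top Y^*(s)+D(s)^\top Z^*(s)+S(s)X^*(s)+R(s)u^*(s)=0$ for a.e.\ $s\in[t,T]$, a.s. Hence the integrand vanishes identically, the conditional-expectation term is zero, and we are left with $\h V(t,\xi)=\h J(t,\xi;u^*)=\lan Y^*(t),\xi\ran$.

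I do not anticipate a genuine obstacle here: this is a short corollary whose only moving parts are (a) identifying the triple $(X^*,Y^*,Z^*)$ in the statement with the adapted solution of \rf{FBSDE-XYZu} for $u=u^*$, which is immediate from comparing the two systems, and (b) quoting the stationarity condition, which is precisely \autoref{thm:SLQ-open-kehua}(ii). The one small point worth stating explicitly is that optimality of $u^*$ for Problem (SLQ) is what licenses the use of \autoref{thm:SLQ-open-kehua} (and, via \autoref{thm:equivalence-SLQ12}, the identification $\h V(t,\xi)=\h J(t,\xi;u^*)$); everything else is direct substitution.
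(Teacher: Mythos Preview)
Your proposal is correct and follows essentially the same route as the paper: invoke the stationarity condition from \autoref{thm:SLQ-open-kehua} to kill the integral term in the representation \rf{hatJ-biaoshi1} of \autoref{lmm:J-rep}. The paper's proof is terser---it does not explicitly cite \autoref{thm:equivalence-SLQ12} for the first equality---but your added justification is harmless and the argument is otherwise identical.
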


\begin{proof}
Since $(X^*,u^*)$ is an optimal pair corresponding to $(t,\xi)$, we have by \autoref{thm:SLQ-open-kehua} that
$$ B(s)^\top Y^*(s)+D(s)^\top Z^*(s)+S(s)X^*(s)+R(s)u^*(s)=0, \q\ae~s\in[t,T],~\as $$
Then it follows immediately from \autoref{lmm:J-rep} that $\h V(t,\xi)=\h J(t,\xi;u^*)=\lan Y^*(t),\xi\ran$.
\end{proof}

\begin{corollary}[Principle of Optimality]\label{crlry:Bellman-Principle} \sl
Let {\rm\ref{(A1)}--\ref{(A2)}} hold. Suppose that $u^*\in\cU[t,T]$ is an optimal control at $(t,\xi)\in\cD$, and let $X^*=\{X^*(s);t\les s\les T\}$
be the corresponding optimal state process. Then for any stopping time $\t$ with $t<\t<T$, the restriction
$$u^*|_{[\t,T]} = \{u^*(s);\t\les s\les T\}$$
of $u^*$ to $[\t,T]$ is optimal at $(\t,X^*(\t))$.
\end{corollary}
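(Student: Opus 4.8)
The plan is to prove the Principle of Optimality by a standard ``cut-and-paste'' argument built on the characterization of optimal controls via FBSDEs (\autoref{thm:SLQ-open-kehua}). First I would record that, since $u^*$ is optimal for Problem (SLQ) with respect to $(t,\xi)$, condition (i) of \autoref{thm:SLQ-open-kehua} gives that $u\mapsto J(t,0;u)$ is convex, and condition (ii) gives that the adapted solution $(X^*,Y^*,Z^*)$ of the FBSDE \rf{SLQ:FBSDE} (with $u^*$ plugged in) satisfies the stationarity condition $B^\top Y^*+D^\top Z^*+S X^*+R u^*=0$ a.e. on $[t,T]$, a.s. I then want to apply \autoref{thm:SLQ-open-kehua} again, this time on the interval $[\t,T]$ with initial pair $(\t,X^*(\t))$.

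The key observation is that the restrictions of $(X^*,Y^*,Z^*)$ to $[\t,T]$ form the adapted solution of exactly the FBSDE associated with the control $u^*|_{[\t,T]}$ and initial pair $(\t,X^*(\t))$: the forward equation for $X^*$ on $[\t,T]$ has initial value $X^*(\t)$ by continuity of paths, the backward equation still has terminal value $Y^*(T)=GX^*(T)$, and the dynamics are the same; uniqueness of adapted solutions (\autoref{lmm:sol-F&B-SDE}) then pins down the solution on $[\t,T]$ as the restriction. Consequently the stationarity condition, which holds a.e. on all of $[t,T]$, certainly holds a.e. on $[\t,T]$, so condition (ii) of \autoref{thm:SLQ-open-kehua} is satisfied for $u^*|_{[\t,T]}$ at $(\t,X^*(\t))$. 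For condition (i) I need the convexity of $v\mapsto J(\t,0;v)$ on $\cU[\t,T]$; this should follow from the convexity of $u\mapsto J(t,0;u)$ on $\cU[t,T]$ by extending any $v\in\cU[\t,T]$ to a control on $[t,T]$ that is zero on $[t,\t)$ and observing that, with zero initial state and zero control on $[t,\t)$, the state stays at zero up to time $\t$, so $J(t,0;\tilde v)=J(\t,0;v)$ for that extension $\tilde v$. Hence $\cN_\t\ges0$. Invoking \autoref{thm:SLQ-open-kehua} in the reverse direction then yields that $u^*|_{[\t,T]}$ is optimal for Problem (SLQ) with respect to $(\t,X^*(\t))$.

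One technical point worth attending to is that $\t$ is a stopping time rather than a deterministic time, so the ``extension by zero on $[t,\t)$'' argument for convexity must be handled at the level of $\cF_\t$-measurable data and progressively measurable controls; alternatively one can avoid it entirely by directly verifying $[\![\cN_\t v,v]\!]\ges 0$ from the representation $[\![\cN_\t v,v]\!]=J(\t,0;v)$ together with the fact that $J(t,0;\cdot)\ges 0$ transfers to any sub-interval starting from a stopping time, using the flow property of the state equation. I expect this transfer of the convexity/positivity condition across a stopping time to be the only place requiring genuine care; everything else is a direct consequence of uniqueness of FBSDE solutions and the localization already established in \autoref{thm:equivalence-SLQ12} (indeed, the indicator-function trick in the proof of \autoref{thm:equivalence-SLQ12} can be reused almost verbatim to handle the $\cF_\t$-measurability issues). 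The restriction argument for the stationarity condition is immediate and is the easy half of the proof.
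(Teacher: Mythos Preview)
Your proposal is correct and follows essentially the same approach as the paper: both verify condition (ii) of \autoref{thm:SLQ-open-kehua} by restricting the FBSDE solution $(X^*,Y^*,Z^*)$ to $[\t,T]$, and both verify condition (i) by extending $v\in\cU[\t,T]$ by zero on $[t,\t)$ and using $J(\t,0;v)=J(t,0;v_e)\ges0$. Your concern about the stopping-time technicality is slightly overcautious; the zero-extension argument goes through directly (as the paper does) since $v_e$ is progressively measurable and the state with zero initial condition and zero control on $[t,\t)$ is identically zero there, and neither the indicator trick from \autoref{thm:equivalence-SLQ12} nor any additional localization is needed.
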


The above property is called the {\it time-consistency} of the optimal control.

\begin{proof}
Let $\t$ be an arbitrary stopping time with values in $(t,T)$. According to \autoref{thm:SLQ-open-kehua},
it suffices to show that
\begin{enumerate}[\indent(a)]
\item $J(\t,0;v)\ges 0$ for all $v\in\cU[\t,T]$, and
\item the adapted solution $(X,Y,Z)$ of the decoupled FBSDE
      $$\left\{\begin{aligned}
         & dX(s)=(AX+Bu^*|_{[\t,T]})ds + (CX+Du^*|_{[\t,T]})dW(s), \\
         & dY(s)=-\big(A^\top Y+C^\top Z+QX+S^\top u^*|_{[\t,T]}\big)ds+ZdW(s), \\
         & X(\t)=X^*(\t), \q Y(T)=GX(T)
      \end{aligned}\right.$$
      satisfies
      $$B(s)^\top Y(s)+D(s)^\top Z(s)+S(s)X(s)+R(s)u^*|_{[\t,T]}(s)=0, \q\ae~s\in[\t,T],~\as$$
\end{enumerate}
To prove (a), let $v\in\cU[\t,T]$ be arbitrary and define the {\it zero-extension} of $v$ on $[t,T]$ as follows:
$$v_e(s)=\left\{\begin{aligned}
    & 0,    && s\in[t,\t), \\
    & v(s), && s\in[\t,T].
\end{aligned}\right.$$
Clearly, $v_e\in\cU[t,T]$. Denote by $X^\t$ and $X^t$ the solutions to the SDEs
$$\left\{\begin{aligned}
   dX^\t(s) &= (AX^\t+Bv)ds + (CX^\t+Dv)dW(s),\q s\in[\t,T], \\
   X^\t(\t) &= 0,
\end{aligned}\right.$$
and
$$\left\{\begin{aligned}
   dX^t(s) &= (AX^t+Bv_e)ds + (CX^t+Dv_e)dW(s),\q s\in[t,T], \\
    X^t(t) &= 0,
\end{aligned}\right.$$
respectively. Since the initial states of the above two SDEs are $0$ and $v_e=0$ on $[t,\t)$, we have
$$X^t(s)=0, \q s\in[t,\t]; \qq X^t(s)=X^\t(s), \q s\in[\t,T],$$
from which it follows that
\begin{align}\label{7-21:Jt=Jtao}
J(\t,0;v) &= \dbE\lt[\lan GX^\t(T),X^\t(T)\ran
             +\int_\t^T\llan\1n\begin{pmatrix}Q(s)&\1nS(s)^\top \\ S(s)&\1nR(s)\end{pmatrix}\1n
                               \begin{pmatrix}X^\t(s) \\ v(s)\end{pmatrix}\1n,
                               \begin{pmatrix}X^\t(s) \\ v(s)\end{pmatrix}\1n\rran ds\rt] \nn\\
          &= \dbE\lt[\lan GX^t(T),X^t(T)\ran
             +\int_t^T\llan\1n\begin{pmatrix}Q(s)&\1nS(s)^\top \\ S(s)&\1nR(s)\end{pmatrix}\1n
                              \begin{pmatrix}X^t(s) \\ v_e(s)\end{pmatrix}\1n,
                              \begin{pmatrix}X^t(s) \\ v_e(s)\end{pmatrix}\1n\rran ds\rt]= J(t,0;v_e).
\end{align}
Since by assumption, Problem (SLQ) is solvable at $(t,\xi)$, we obtain from \autoref{thm:SLQ-open-kehua} (i)
and relation \rf{7-21:Jt=Jtao} that
$$J(\t,0;v)= J(t,0;v_e) \ges0, \q\forall v\in\cU[\t,T].$$
To prove (b), let $(X^*,Y^*,Z^*)=\{(X^*(s),Y^*(s),Z^*(s));t\les s\les T\}$ be the adapted solution to
$$\left\{\begin{aligned}
   & dX^*(s)=(AX^*+Bu^*)ds + (CX^*+Du^*)dW(s), \\
   & dY^*(s)=-\big(A^\top Y^*+C^\top Z^*+QX^*+S^\top u^*\big)ds+Z^*dW(s), \\
   & X^*(t)=\xi, \q Y^*(T)=GX^*(T).
\end{aligned}\right.$$
Since $u^*\in\cU[t,T]$ is an optimal control at $(t,\xi)$,
we have by \autoref{thm:SLQ-open-kehua} (ii) that
$$B(s)^\top Y^*(s)+D(s)^\top Z^*(s)+S(s)X^*(s)+R(s)u^*(s)=0, \q\ae~s\in[t,T],~\as$$
Then assertion (b) follows from the fact that
$$(X(s),Y(s),Z(s)) = (X^*(s),Y^*(s),Z^*(s)), \q \t\les s\les T.$$
The proof is completed.
\end{proof}

\section{Properties of the Stochastic Value Flow $\h V(t,\xi)$}\label{sec:value-process}

We present in this section some properties of the stochastic value flow $\h V(t,\xi)$.
These include a quadratic representation of $\h V(t,\xi)$ in terms of a bounded,
$\dbS^n$-valued process $P=\{P(t);0\les t\les T\}$ as well as the left-continuity
of $t\mapsto P(t)$.
We shall see in \autoref{sec:SRE} that the sample paths of $P$ are actually continuous
and that $P$, together with another square-integrable process $\L=\{\L(t);0\les t\les T\}$,
satisfies a stochastic Riccati equation.

\ms

Let $e_1,\ldots,e_n$ be the standard basis for $\dbR^n$.
Recall that for a state-control pair $(X,u)=\{(X(s)$, $u(s));t\les s\les T\}$ corresponding to
the initial pair $(t,\xi)$, the associated adjoint BSDE is given by
\bel{adjoint}\left\{\begin{aligned} \sl
   dY(s) &= -\big(A^\top Y+C^\top Z+QX+S^\top u\big)ds+ZdW(s), \q s\in[t,T], \\
    Y(T) &= GX(T).
\end{aligned}\right.\ee
We have the following result.

\begin{proposition}\label{prop:ux=Ux} \sl
Let {\rm\ref{(A1)}--\ref{(A2)}} hold and let $t\in[0,T)$ be given.
Suppose that Problem {\rm(SLQ)} is solvable at the initial pair $(t,e_i)$ for every $1\les i\les n$.
Let $(X_i,u_i)=\{(X_i(s),u_i(s));t\les s\les T\}$ be an optimal pair with respect to $(t,e_i)$,
and let $(Y_i,Z_i)=\{(Y_i(s),Z_i(s));t\les s\les T\}$ be the adapted solution to the associated adjoint BSDE.
Then with
$$\BX=(X_1,\cdots,X_n), \q \BU=(u_1,\cdots,u_n), \q \BY=(Y_1,\cdots,Y_n), \q \BZ=(Z_1,\cdots,Z_n),$$
the $4$-tuple of matrix-valued processes $(\BX,\BU,\BY,\BZ)$ satisfies the FBSDE
$$\left\{\begin{aligned}
   & d\BX(s)=(A\BX+B\BU)ds + (C\BX+D\BU)dW(s), \\
   & d\BY(s)=-\big(A^\top\BY+C^\top\BZ+Q\BX+S^\top\BU\big)ds+\BZ dW(s), \\
   & \BX(t)=I_n, \q \BY(T)=G\BX(T),
\end{aligned}\right.$$
and is such that
\bel{6-12-stationarity} B^\top\BY +D^\top\BZ +S\BX +R\BU=0, \q\ae~\hb{on}~[t,T],~\as \ee
Moreover, the state-control pair $(\BX\xi,\BU\xi)=\{(\BX(s)\xi,\BU(s)\xi);t\les s\les T\}$ is optimal
with respect to $(t,\xi)$ for any $\xi\in L^\i_{\cF_t}(\Om;\dbR^n)$,
and $(\BY\xi,\BZ\xi)=\{(\BY(s)\xi,\BZ(s)\xi);t\les s\les T\}$ solves the adjoint BSDE \rf{adjoint} associated
with $(X,u)=(\BX\xi,\BU\xi)$.
\end{proposition}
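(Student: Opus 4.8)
The plan is to derive everything from \autoref{thm:SLQ-open-kehua} together with the uniqueness statements in \autoref{lmm:sol-F&B-SDE}, the guiding idea being that an $\cF_t$-measurable random variable behaves as a ``constant'' for stochastic calculus on $[t,T]$.

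First I would assemble the matrix FBSDE. Since each $(X_i,u_i)$ is optimal with respect to $(t,e_i)$, \autoref{thm:SLQ-open-kehua} tells us that $(X_i,Y_i,Z_i)$ solves the decoupled FBSDE of that theorem with initial state $e_i$ and control $u_i$, and that the stationarity condition $B^\top Y_i+D^\top Z_i+SX_i+Ru_i=0$ holds for $\ae~s\in[t,T]$, $\as$ Reading these $n$ vector identities as the columns of a single matrix identity yields at once the asserted FBSDE for $(\BX,\BU,\BY,\BZ)$: all the equations are linear in the unknowns, so they collect column-by-column; the initial condition becomes $\BX(t)=(e_1,\cds,e_n)=I_n$, the terminal condition becomes $\BY(T)=(GX_1(T),\cds,GX_n(T))=G\BX(T)$, and \rf{6-12-stationarity} is exactly the stacked stationarity condition.

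Next, fixing $\xi\in L^\i_{\cF_t}(\Om;\dbR^n)$, I would multiply the matrix FBSDE on the right by $\xi$. Because $\xi$ is bounded and $\cF_t$-measurable (hence $\cF_s$-measurable for all $s\ges t$), it may be pulled inside both the Lebesgue integrals and the It\^o integrals without disturbing progressive measurability or square-integrability; thus $\BX\xi$, $\BY\xi$, $\BZ\xi$ satisfy
\begin{align*}
 & d(\BX\xi)=(A\BX\xi+B\BU\xi)ds+(C\BX\xi+D\BU\xi)dW(s), \q \BX(t)\xi=\xi, \\
 & d(\BY\xi)=-\big(A^\top\BY\xi+C^\top\BZ\xi+Q\BX\xi+S^\top\BU\xi\big)ds+\BZ\xi\,dW(s), \q \BY(T)\xi=G(\BX(T)\xi).
\end{align*}
Since $\BU\in L^2_\dbF(t,T;\dbR^{m\times n})$ and $\xi$ is bounded we have $\BU\xi\in\cU[t,T]$, and $\BX\xi\in L_\dbF^2(\Om;C([t,T];\dbR^n))$ as a product of such a process with a bounded random variable. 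By the uniqueness parts of \autoref{lmm:sol-F&B-SDE}, $\BX\xi$ coincides with the state process $X(\cd\,;t,\xi,\BU\xi)$, and $(\BY\xi,\BZ\xi)$ coincides with the adapted solution of the adjoint BSDE \rf{adjoint} associated with $(X,u)=(\BX\xi,\BU\xi)$.

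Finally, multiplying \rf{6-12-stationarity} on the right by $\xi$ gives $B^\top(\BY\xi)+D^\top(\BZ\xi)+S(\BX\xi)+R(\BU\xi)=0$ for $\ae~s\in[t,T]$, $\as$, which is precisely the stationarity condition \rf{stationarity} for the pair $(\BX\xi,\BU\xi)$; moreover, convexity of $u\mapsto J(t,0;u)$ holds automatically here, since by hypothesis Problem (SLQ) is solvable at $(t,e_1)$. \autoref{thm:SLQ-open-kehua}, applied to the initial pair $(t,\xi)$, then yields that $(\BX\xi,\BU\xi)$ is optimal with respect to $(t,\xi)$. The one point requiring genuine care is the middle step: justifying rigorously that right-multiplication of a matrix-valued FBSDE by a bounded $\cF_t$-measurable random vector produces the unique adapted solution of the corresponding vector-valued FBSDE. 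This is where the boundedness of $\xi$ enters, to keep everything inside the $L^2$ spaces in which \autoref{lmm:sol-F&B-SDE} supplies uniqueness.
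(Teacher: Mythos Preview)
Your proof is correct and follows essentially the same approach as the paper: both argue that the first assertion is immediate from \autoref{thm:SLQ-open-kehua} applied column-by-column, and then obtain the second by right-multiplying by the bounded $\cF_t$-measurable $\xi$ and invoking \autoref{thm:SLQ-open-kehua} once more. Your version is slightly more detailed in places---you explicitly invoke the uniqueness in \autoref{lmm:sol-F&B-SDE} to identify $(\BX\xi,\BY\xi,\BZ\xi)$ with the relevant FBSDE solution, and you explicitly note that convexity (condition (i) of \autoref{thm:SLQ-open-kehua}) follows from solvability at $(t,e_1)$---whereas the paper leaves these points implicit.
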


\begin{proof}
The first assertion is an immediate consequence of \autoref{thm:SLQ-open-kehua}.
For the second assertion, we note that since $\xi$ is $\cF_t$-measurable and bounded, the pair
$$\big(X^*(s),u^*(s)\big)\deq\big(\BX(s)\xi,\BU(s)\xi\big); \q t\les s\les T$$
is square-integrable and satisfies the state equation
$$\left\{\begin{aligned}
   dX^*(s) &=(AX^*+Bu^*)ds + (CX^*+Du^*)dW(s), \q s\in[t,T],\\
    X^*(t) &=\xi.
\end{aligned}\right.$$
With the same reason, we see that the pair
$$ (Y^*(s),Z^*(s))\deq(\BY(s)\xi,\BZ(s)\xi); \q t\les s\les T $$
is the adapted solution to the adjoint BSDE associated with $(X^*,u^*)$:
$$\left\{\begin{aligned}
   dY^*(s) &=-\big(A^\top Y^*+C^\top Z^*+QX^*+S^\top u^*\big)ds +Z^*dW(s), \q s\in[t,T],\\
    Y^*(T) &=GX^*(T).
\end{aligned}\right.$$
Furthermore, \rf{6-12-stationarity} implies that
$$ B^\top Y^*+D^\top Z^*+SX^*+Ru^*= \big(B^\top\BY+D^\top\BZ+S\BX+R\BU\big)\xi=0,
   \q\ae~\hb{on}~[t,T],~\as $$
Thus by \autoref{thm:SLQ-open-kehua}, $(X^*,u^*)$ is optimal with respect to $(t,\xi)$.
\end{proof}

The following result shows that the stochastic value flow has a quadratic form.

\begin{theorem}\label{thm:V=Pxx} \sl
Let {\rm\ref{(A1)}--\ref{(A2)}} hold. If Problem {\rm(SLQ)} is solvable at $t$, then there exists
an $\dbS^n$-valued, $\cF_t$-measurable, integrable random variable $P(t)$ such that
\bel{hV}\h V(t,\xi) = \lan P(t)\xi,\xi\ran, \q\forall\xi\in L^\i_{\cF_t}(\Om;\dbR^n).\ee
\end{theorem}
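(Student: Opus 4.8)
The plan is to use \autoref{prop:ux=Ux} to produce a candidate for $P(t)$ and then invoke \autoref{crlry:V=Y(t)X(t)} to identify it with the stochastic value flow. First I would apply \autoref{prop:ux=Ux}: since Problem (SLQ) is solvable at $t$, it is in particular solvable at each initial pair $(t,e_i)$, so we obtain the matrix-valued processes $(\BX,\BU,\BY,\BZ)$ with $\BX(t)=I_n$, the terminal/adjoint structure, and the stationarity relation \rf{6-12-stationarity}. The key object is $\BY(t)$, an $\cF_t$-measurable, integrable $\dbR^{n\times n}$-valued random variable. I would set $P(t) \deq \frac12\big(\BY(t)+\BY(t)^\top\big)$ and show that for every $\xi\in L^\i_{\cF_t}(\Om;\dbR^n)$ one has $\h V(t,\xi)=\lan P(t)\xi,\xi\ran = \lan \BY(t)\xi,\xi\ran$.

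The central computation is the identity $\h V(t,\xi)=\lan\BY(t)\xi,\xi\ran$ for bounded $\cF_t$-measurable $\xi$. By the last assertion of \autoref{prop:ux=Ux}, $(\BX\xi,\BU\xi)$ is an optimal pair with respect to $(t,\xi)$ and $(\BY\xi,\BZ\xi)$ solves the associated adjoint BSDE \rf{adjoint}. Hence \autoref{crlry:V=Y(t)X(t)} applies directly with $(X^*,u^*,Y^*,Z^*)=(\BX\xi,\BU\xi,\BY\xi,\BZ\xi)$ and gives
$$ \h V(t,\xi)=\h J(t,\xi;\BU\xi)=\lan \BY(t)\xi,\xi\ran. $$
Since $\lan \BY(t)\xi,\xi\ran = \lan P(t)\xi,\xi\ran$ for the symmetrized $P(t)$, this is exactly the claimed quadratic representation. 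Integrability of $P(t)$ follows from that of $\BY(t)$, which is part of \autoref{lmm:sol-F&B-SDE} (the adapted solution lies in $L^2_\dbF(\Om;C([t,T];\dbR^{n\times n}))$, so $\BY(t)$ is square-integrable, a fortiori integrable); $\cF_t$-measurability of $\BY(t)$ is automatic since $\BY$ is $\dbF$-adapted. The symmetry of $P(t)$ holds by construction.

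One technical point worth addressing is that $\BY(t)$ itself, while not a priori symmetric, must act symmetrically on the subspace of bounded random vectors in the sense that $\lan\BY(t)\xi,\eta\ran=\lan\xi,\BY(t)\eta\ran$ for all bounded $\cF_t$-measurable $\xi,\eta$; one can see this by applying \rf{6-12-stationarity} together with It\^o's formula to $s\mapsto\lan\BY(s)\xi,\BX(s)\eta\ran$ and to $s\mapsto\lan\BY(s)\eta,\BX(s)\xi\ran$, using that $G$, $Q$, and $R$ are symmetric, exactly as in the proof that $\cN_t$ is self-adjoint. This symmetry makes the replacement of $\BY(t)$ by its symmetrization harmless, i.e.\ $\lan\BY(t)\xi,\xi\ran=\lan P(t)\xi,\xi\ran$ identically, and also confirms that the representation does not depend on the particular choice of optimal pairs $(X_i,u_i)$ used to build $\BX,\BU,\BY,\BZ$.

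I expect the main obstacle to be purely bookkeeping rather than conceptual: one must make sure that ``solvable at $t$'' supplies optimal pairs at each $(t,e_i)$ (immediate from \autoref{def:ol-opt-cntrl}(ii)) and that all processes $\BX(s)\xi,\BU(s)\xi,\BY(s)\xi,\BZ(s)\xi$ are genuinely square-integrable so that \autoref{crlry:V=Y(t)X(t)} is applicable --- this is guaranteed by the boundedness of $\xi$ as spelled out in \autoref{prop:ux=Ux}. There is no analytic difficulty here; the estimate and the existence are all inherited from \autoref{lmm:sol-F&B-SDE} and the earlier structural results. The only mildly delicate issue is that $P(t)$ is so far defined only via its action on bounded $\xi$; extending the quadratic identity to general $\xi\in\cX_t$ (if needed later) would require an approximation argument, but the statement as given is restricted to $\xi\in L^\i_{\cF_t}(\Om;\dbR^n)$, so I would not pursue that here.
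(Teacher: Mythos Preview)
Your proof is correct and complete, but it takes a different route from the paper's own argument. The paper does not invoke \autoref{crlry:V=Y(t)X(t)} at all; instead it works directly with the cost: using \autoref{prop:ux=Ux} to produce the optimal pair $(\BX\xi,\BU\xi)$, it expands $L(t,\xi;\BU\xi)$ as $\lan\BM\xi,\xi\ran+\int_t^T\lan\BN(s)\xi,\xi\ran ds$ with $\BM=\BX(T)^\top G\BX(T)$ and $\BN(s)$ the obvious quadratic form in $(\BX(s),\BU(s))$, and then sets $P(t)=\dbE\big[\BM+\int_t^T\BN(s)\,ds\,\big|\,\cF_t\big]$. This has the advantage that symmetry and integrability of $P(t)$ are manifest from the outset, so no symmetrization or separate symmetry argument is needed. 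Your route, by contrast, is shorter and identifies $P(t)$ with (the symmetric part of) $\BY(t)$ right away; this is in fact the relation $P(t)=\BY(t)\BX(t)^{-1}$ at $t$ that the paper only establishes later in \autoref{sec:SRE}, so your approach anticipates that connection. Either argument is fine; the paper's is more self-contained at this point, while yours leans more heavily on the adjoint machinery already developed in \autoref{sec:equivalence}.
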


\begin{proof}
Let $\{(X_i(s),u_i(s));t\les s\les T\}$ and $\{(\BX(s),\BU(s));t\les s\les T\}$ be as in \autoref{prop:ux=Ux}.
Then by \autoref{prop:ux=Ux}, the state-control pair $(\BX\xi,\BU\xi)$ is optimal with respect to $(t,\xi)$
for any $\xi\in L^\i_{\cF_t}(\Om;\dbR^n)$. Denoting
$$ \BM(T) =\BX(T)^\top G\BX(T),
\q \BN(s) =\begin{pmatrix}\BX(s) \\ \BU(s)\end{pmatrix}^{\1n\top}\2n
           \begin{pmatrix}Q(s)&S(s)^\top \\ S(s)&R(s)\end{pmatrix}
           \begin{pmatrix}\BX(s) \\ \BU(s)\end{pmatrix}, $$
we may write
\begin{align*}
   L(t,\xi;\BU\xi)
   &= \lan G\BX(T)\xi,\BX(T)\xi\ran
      +\int_t^T\llan\1n\begin{pmatrix}Q(s)&\1nS(s)^\top \\ S(s)&\1nR(s)\end{pmatrix}\1n
                       \begin{pmatrix}\BX(s)\xi \\ \BU(s)\xi\end{pmatrix}\1n,
                       \begin{pmatrix}\BX(s)\xi \\ \BU(s)\xi\end{pmatrix}\1n\rran ds \\
   &= \lan\BM(T)\xi,\xi\ran +\int_t^T\lan\BN(s)\xi,\xi\ran ds.
\end{align*}
Since $\xi$ is $\cF_t$-measurable, it follows that
$$\h V(t,\xi) = \dbE[L(t,\xi;\BU\xi)|\cF_t]
              = \lan\dbE\[\BM(T)+\int_t^T\BN(s)ds\bigm|\cF_t\]\xi,\xi\ran
         \equiv \lan P(t)\xi,\xi\ran.$$
The proof is completed.
\end{proof}

\begin{remark}\rm
So far we have established a number of results for Problem (SLQ) on
a deterministic interval $[t,T]$. We may also consider Problem (SLQ) on stochastic intervals $[\si,\t]$,
where $\si$ and $\t$ are $\dbF$-stopping times with $0\les\si\les\t\les T$.
With $t$ and $T$ respectively replaced by two finite stopping times $\si$ and $\t$, all the previous
results remain valid and can be proved using the same argument as before.
See \cite{Chen-Yong 2000, Chen-Yong 2001} for a similar consideration.
\end{remark}
From \autoref{crlry:existence-iff} (i), we see that $\cN_t\ges0$ (or equivalently, $[\![ \cN_tu,u ]\!]\ges0$
for all $u\in\cU[t,T]$) is a necessary condition for the existence of an optimal control, and from \autoref{crlry:existence-iff} (ii), we see that a sufficient condition guaranteeing the existence of a unique optimal control is
$$\cN_t\ges0 \q\hb{and}\q \cN_t~\hb{is invertible,}$$
which is equivalent to the uniform positive-definiteness of $\cN_t$, that is, there exists a constant $\d>0$ such that
\bel{uni-convex*} J(t,0;u) = [\![ \cN_tu,u]\!]\ges\d[\![u,u]\!] = \d\,\dbE\int_t^T|u(s)|^2ds, \q\forall u\in\cU[t,T].\ee

To carry out some further investigations of the stochastic value flow, let us suppose now that at the initial time $t=0$, the cost functional is uniformly convex; i.e., the following holds:
\begin{equation}\label{convexity:uni}
J(0,0;u)= [\![ \cN_0u,u ]\!] \ges \d\,\dbE\int_0^T|u(s)|^2ds, \q\forall u\in \cU[0,T],\q\hb{for some $\d>0$}.
\end{equation}
Such a condition implies that Problem (SLQ) is solvable at $t=0$ (see \autoref{crlry:existence-iff} (ii)).
The next result further shows that Problem (SLQ) is actually solvable at any stopping time $\t:\Om\to[0,T]$
when condition \rf{convexity:uni} holds.

\begin{proposition}\label{prop:t-tao} \sl Let {\rm\ref{(A1)}--\ref{(A2)}} hold. Suppose \rf{convexity:uni} holds. Then for any $\dbF$-stopping time $\t:\Om\to[0,T]$, we have
$$ J(\t,0;u) \ges \d\,\dbE\int_\t^T|u(s)|^2ds, \q\forall u\in\cU[\t,T].$$
Consequently, Problem {\rm(SLQ)} is uniquely solvable at $\t$.
\end{proposition}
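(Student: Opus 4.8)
The goal is to promote uniform convexity at the deterministic initial time $0$ to uniform convexity (with the same constant $\d$) at an arbitrary stopping time $\t$. The natural idea is the zero-extension trick already used in the proof of \autoref{crlry:Bellman-Principle}: given $u\in\cU[\t,T]$, extend it by $0$ on $[0,\t)$ to obtain a control $u_e\in\cU[0,T]$, relate $J(\t,0;u)$ to $J(0,0;u_e)$, and then quote \rf{convexity:uni}. Once the estimate $J(\t,0;u)\ges\d\,\dbE\int_\t^T|u(s)|^2ds$ is in hand, uniform convexity of the cost at every stopping time $\t$ gives, via \autoref{crlry:existence-iff} (ii) applied with initial time $\t$ (using the remark that all the Hilbert-space machinery of \autoref{sec:Hilbert-View} goes through verbatim on stochastic intervals), unique solvability of Problem (SLQ) at every $\t$; in particular at every deterministic $t<T$, which is precisely unique (open-loop) solvability in the sense of \autoref{def:ol-opt-cntrl}.

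\textbf{Key steps.} First I would fix an $\dbF$-stopping time $\t:\Om\to[0,T]$ and an arbitrary $u\in\cU[\t,T]$, and define the zero-extension
$$u_e(s)=\begin{cases}0,& s\in[0,\t),\\ u(s),& s\in[\t,T],\end{cases}$$
noting $u_e\in\cU[0,T]$ (it is $\dbF$-progressively measurable because $\{\t\les s\}\in\cF_s$, and square-integrable). Second, let $X^\t$ solve the controlled state equation on $[\t,T]$ with control $u$ and initial datum $X^\t(\t)=0$, and let $X^{(0)}$ solve it on $[0,T]$ with control $u_e$ and $X^{(0)}(0)=0$. Because both the initial state and the control vanish on $[0,\t)$, uniqueness for the SDE forces $X^{(0)}(s)=0$ on $[0,\t]$ and $X^{(0)}(s)=X^\t(s)$ on $[\t,T]$ (this is the exact analogue of the computation \rf{7-21:Jt=Jtao}). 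Third, plug these identities into the definition \rf{rv-L} of $L$ and take expectations: the terminal term $\lan GX^{(0)}(T),X^{(0)}(T)\ran$ equals $\lan GX^\t(T),X^\t(T)\ran$, and the running integral over $[0,T]$ collapses to the integral over $[\t,T]$ since the integrand is identically $0$ on $[0,\t)$. Hence $J(0,0;u_e)=J(\t,0;u)$. Fourth, apply \rf{convexity:uni} to $u_e$:
$$J(\t,0;u)=J(0,0;u_e)\ges\d\,\dbE\int_0^T|u_e(s)|^2ds=\d\,\dbE\int_\t^T|u(s)|^2ds,$$
which is the displayed inequality. Finally, for the "consequently": the inequality just proved says $\cN_\t$ (the operator of \autoref{sec:Hilbert-View} built on the stochastic interval $[\t,T]$) is uniformly positive, hence $\cN_\t\ges0$ and $\cN_\t$ is invertible; by \autoref{crlry:existence-iff} (ii) (valid on stochastic intervals by the remark following \autoref{thm:V=Pxx}), Problem (SLQ) is uniquely solvable at $\t$ with optimal control $-\cN_\t^{-1}\cL_\t\xi$. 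Since this holds for every stopping time, in particular for every deterministic $t<T$, Problem (SLQ) is uniquely open-loop solvable.

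\textbf{Main obstacle.} None of the steps is deep; the only points requiring care are (a) checking that $u_e$ is genuinely in $\cU[0,T]$, i.e.\ progressive measurability of the zero-extension past a stopping time, and (b) justifying rigorously that the solution of the state SDE started at $0$ with a control vanishing before $\t$ stays at $0$ up to $\t$ and then coincides with $X^\t$ — this is a standard pasting/uniqueness argument for SDEs on stochastic intervals but should be invoked explicitly (it mirrors \rf{7-21:Jt=Jtao} in the proof of \autoref{crlry:Bellman-Principle}). A secondary bookkeeping point is the appeal to the stochastic-interval versions of the results of \autoref{sec:Hilbert-View}; since the paper has already flagged (in the remark after \autoref{thm:V=Pxx}) that all earlier results carry over with $t,T$ replaced by stopping times, this can simply be cited rather than reproved.
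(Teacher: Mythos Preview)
Your proposal is correct and follows essentially the same route as the paper: zero-extend $u\in\cU[\t,T]$ to $u_e\in\cU[0,T]$, invoke the computation behind \rf{7-21:Jt=Jtao} to get $J(\t,0;u)=J(0,0;u_e)$, apply \rf{convexity:uni}, and then conclude unique solvability from \autoref{crlry:existence-iff} (ii). The only minor remark is that \autoref{crlry:existence-iff} is already stated for stopping times, so you need not appeal to the stochastic-interval remark for the final step.
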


\begin{proof}
Let $u\in \cU[\t,T]$ be arbitrary and define
$$u_e(s)=\left\{\begin{aligned}
    & 0,    && s\in[0,\t), \\
    & u(s), && s\in[\t,T].
\end{aligned}\right.$$
Processing exactly as in the proof of \autoref{crlry:Bellman-Principle} (the proof of (b)) with $t$,$v$ and $v_e$ replaced by $0$, $u$ and $u_e$, respectively, we obtain
$$J(\t,0;u) = J(0,0;u_e) \ges \d\,\dbE\int_0^T|u_e(s)|^2ds = \d\,\dbE\int_\t^T|u(s)|^2ds.$$
Thus, by \autoref{crlry:existence-iff} (ii), Problem {\rm(SLQ)} is uniquely solvable at $\t$.
\end{proof}

Under the conditions of \autoref{prop:t-tao}, Problem (SLQ) is solvable at any initial time $t\in[0,T]$.
Thus, according to \autoref{thm:V=Pxx}, there exists an $\dbF$-adapted process $P:[0,T]\times\Om\to\dbS^n$
such that
\begin{equation}\label{V-property-1*}
\h V(t,\xi) = \lan P(t)\xi,\xi\ran, \q\forall (t,\xi)\in[0,T]\times L^\i_{\cF_t}(\Om;\dbR^n).
\end{equation}
It is trivially seen that $P(T)=G$. Our next aim is to show that the process $P=\{P(t);0\les t\les T\}$
is bounded and left-continuous.
To this end, let $\t$ be an $\dbF$-stopping time with values in $(0,T]$ and denote by $\cS[0,\t)$
the set of $\dbF$-stopping times valued in $[0,\t)$. Let
$$\cD^\t=\big\{(\si,\xi)~|~\si\in\cS[0,\t),~\xi\in L^2_{\cF_\si}(\Om;\dbR^n)\big\},$$
and denote $\cU[\si,\t]=L_\dbF^2(\si,\t;\dbR^m)$ for $\si\in\cS[0,\t)$.
Consider the following {\it stopped} SLQ problem:

\ms

{\bf Problem (SLQ)$^\t$.} For any given initial pair $(\si,\xi)\in\cD^\t$, find a control $u^*\in\cU[\si,\t]$
such that the cost functional
$$ J^\t(\si,\xi;u) \deq \dbE\[\lan P(\t)X(\t),X(\t)\ran
      +\int_\si^\t\llan\1n\begin{pmatrix}Q(s)&\1nS(s)^\top \\ S(s)&\1nR(s)\end{pmatrix}\1n
                         \begin{pmatrix}X(s) \\ u(s)\end{pmatrix}\1n,
                         \begin{pmatrix}X(s) \\ u(s)\end{pmatrix}\1n\rran ds\] $$
is minimized subject to the state equation (over the stochastic interval $[\si,\t]$)
\bel{X-tau-kappa}\left\{\begin{aligned}
   dX(s) &=[A(s)X(s)+B(s)u(s)]ds+[C(s)X(s)+D(s)u(s)]dW(s), \q s\in[\si,\t],\\
  X(\si) &=\xi.
\end{aligned}\right.\ee

\begin{proposition}\label{prop:SLQtau} \sl Let {\rm\ref{(A1)}--\ref{(A2)}} hold. Suppose \rf{convexity:uni} holds. Then
\begin{enumerate}[\q\rm(i)]
\item for any $\si\in\cS[0,\t)$,
      $$ J^\t(\si,0;u) \ges \d\,\dbE\int_\si^\t|u(s)|^2ds, \q\forall u\in \cU[\si,\t]; $$
\item Problem {\rm(SLQ)$^\t$} is uniquely solvable at any $\si\in\cS[0,\t)$;
\item if $u^*\in\cU[\si,T]$ is an optimal control of Problem {\rm(SLQ)} at $(\si,\xi)\in\cD$, then the
      restriction $u^*|_{[\si,\t]}$ of $u^*$ to $[\si,\t]$ is an optimal control of Problem {\rm(SLQ)$^\t$} at the same initial pair $(\si,\xi)$;
\item the value flow $V^\t(\cd\,,\cd)$ of Problem {\rm(SLQ)$^\t$} admits the following form:
      $$V^\t(\si,\xi) = \dbE\lan P(\si)\xi,\xi\ran, \q\forall (\si,\xi)\in\cS[0,\t)\times L^\i_{\cF_\si}(\Om;\dbR^n).$$
\end{enumerate}
\end{proposition}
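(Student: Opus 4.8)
The plan is to treat Problem (SLQ)$^\t$ on the stochastic interval $[\si,\t]$ exactly as Problem (SLQ) was treated on $[t,T]$, with $T$ replaced by the stopping time $\t$ and $G$ replaced by the bounded random variable $P(\t)$; as noted in the Remark following \autoref{thm:V=Pxx}, all earlier results carry over verbatim to stochastic intervals. The four assertions are then proved in order, each building on the previous one.

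For (i), let $u\in\cU[\si,\t]$ be arbitrary and extend it by zero to $u_e\in\cU[\si,T]$ (setting $u_e\equiv 0$ on $(\t,T]$). Following the computation in the proof of \autoref{crlry:Bellman-Principle}(a) — with the roles of $t,\t$ now played by $\si,\t$, and observing that the state trajectory started from $0$ under $u_e$ vanishes on $(\t,T]$ because the control is zero and the initial condition at $\t$ is $0$ — one gets that the running cost of Problem (SLQ) on $[\si,T]$ with control $u_e$ equals the cost of Problem (SLQ)$^\t$ on $[\si,\t]$ with control $u$, \emph{provided} one first uses the definition $\h V(\t,\cd)=\lan P(\t)\cd,\cd\ran$ to identify $\dbE\lan P(\t)X_e(\t),X_e(\t)\ran$ with the appropriate continuation cost. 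Actually the cleanest route: since $X_e\equiv0$ on $[\t,T]$, both terminal and running contributions on $(\t,T]$ vanish, so $J(\si,0;u_e)=J^\t(\si,0;u)$ with the terminal weight $P(\t)$ appearing because $X_e(\t)=0$ makes that term zero as well — hence in fact $J^\t(\si,0;u)=J(\si,0;u_e)$ directly. Then \autoref{prop:t-tao} applied at the stopping time $\si$ gives $J(\si,0;u_e)\ges\d\,\dbE\int_\si^T|u_e|^2ds=\d\,\dbE\int_\si^\t|u|^2ds$, which is (i).

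Assertion (ii) is then immediate: (i) says the operator $\cN_\si^\t$ associated with Problem (SLQ)$^\t$ is uniformly positive, hence invertible and nonnegative, so \autoref{crlry:existence-iff}(ii) (in its stochastic-interval form) yields unique solvability at every $\si\in\cS[0,\t)$. For (iii), let $u^*\in\cU[\si,T]$ be optimal for Problem (SLQ) with respect to $(\si,\xi)$. By \autoref{crlry:Bellman-Principle} the restriction $u^*|_{[\t,T]}$ is optimal for Problem (SLQ) with respect to $(\t,X^*(\t))$, so its conditional cost is $\h V(\t,X^*(\t))=\lan P(\t)X^*(\t),X^*(\t)\ran$; substituting this into the dynamic-programming decomposition of $J(\si,\xi;u^*)$ as the sum of the cost accrued on $[\si,\t]$ plus $\dbE\lan P(\t)X^*(\t),X^*(\t)\ran$ shows that $u^*|_{[\si,\t]}$ minimizes $J^\t(\si,\xi;\cd)$ — any cheaper control on $[\si,\t]$, re-concatenated with $u^*|_{[\t,T]}$, would beat $u^*$ on $[\si,T]$, contradicting optimality. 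Finally, (iv): by (iii), the value $V^\t(\si,\xi)$ is attained, for bounded $\xi$, by $u^*|_{[\si,\t]}$ where $u^*$ is optimal for Problem (SLQ) at $(\si,\xi)$, and the corresponding minimal cost equals $J(\si,\xi;u^*)$ minus nothing (the decomposition is exact), so $V^\t(\si,\xi)=\dbE[\h J(\si,\xi;u^*)]=\dbE\,\h V(\si,\xi)=\dbE\lan P(\si)\xi,\xi\ran$ by \autoref{thm:V=Pxx} and \autoref{crlry:V=Y(t)X(t)}.

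The main obstacle I anticipate is (iii): one must be careful that the ``dynamic-programming decomposition'' — writing the total cost over $[\si,T]$ as running cost over $[\si,\t]$ plus $\dbE[\,\h J(\t,X^*(\t);u^*|_{[\t,T]})\,]$ — is legitimate when $\t$ is a genuine stopping time and the intermediate state $X^*(\t)$ is only square-integrable (not bounty). The identification $\h J(\t,X^*(\t);u^*|_{[\t,T]})=\lan P(\t)X^*(\t),X^*(\t)\ran$ was established in \autoref{thm:V=Pxx} only for \emph{bounded} initial data, so one needs either an approximation argument (truncating $\xi$, using the uniform bounds from \autoref{prop:ux=Ux} and stability estimates of \autoref{lmm:sol-F&B-SDE} to pass to the limit) or a direct appeal to \autoref{crlry:V=Y(t)X(t)}, which expresses the optimal conditional cost as $\lan Y^*(\t),X^*(\t)\ran$ with $Y^*(\t)=P(\t)X^*(\t)$ holding $\dbP$-a.s.\ on the event $\{\si\les\t\}$ by the representation $\bar Y=M\bar X$-type identity combined with the linear structure $Y^*(\t)=\BY(\t)\BX(\t)^{-1}X^*(\t)$ from \autoref{prop:ux=Ux}; this avoids the boundedness restriction entirely and is the route I would take.
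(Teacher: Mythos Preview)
Your argument for (i) contains a genuine error. When you extend $u\in\cU[\si,\t]$ forward by zero to $u_e$ on $[\si,T]$, the state process $X_e$ does \emph{not} vanish on $[\t,T]$: the initial state at $\si$ is $0$, but after applying a nonzero control $u$ on $[\si,\t]$ the value $X_e(\t)=X_1(\t)$ is generically nonzero, and then evolves under the homogeneous SDE $dX=AX\,ds+CX\,dW$ on $(\t,T]$. Consequently your claimed identity $J(\si,0;u_e)=J^\t(\si,0;u)$ is false; what you actually get is
\[
J(\si,0;u_e)=(\text{running cost on }[\si,\t])+J(\t,X_1(\t);0)\ \ges\ (\text{running cost on }[\si,\t])+V(\t,X_1(\t))=J^\t(\si,0;u),
\]
and this inequality points the wrong way: combining it with $J(\si,0;u_e)\ges\d\,\dbE\int_\si^T|u_e|^2ds$ yields no lower bound on $J^\t(\si,0;u)$. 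You have confused this with the situation in \autoref{crlry:Bellman-Principle}(a), where the extension is \emph{backward} (the control is set to zero \emph{before} the active interval), so the state genuinely stays at zero until the control switches on.

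The paper's remedy is to concatenate $u$ not with zero but with the \emph{optimal} continuation: for arbitrary $u\in\cU[\si,\t]$, let $v^*\in\cU[\t,T]$ be optimal for Problem (SLQ) at $(\t,X_1(\t))$. Because $v^*$ realizes $\h V(\t,X_1(\t))=\lan P(\t)X_1(\t),X_1(\t)\ran$, one obtains the exact identity $J(\si,\xi;u\oplus v^*)=J^\t(\si,\xi;u)$. Taking $\xi=0$ and invoking \autoref{prop:t-tao} then gives (i) with the correct sign, since $\dbE\int_\si^T|u\oplus v^*|^2ds\ges\dbE\int_\si^\t|u|^2ds$. This same identity also yields $J^\t(\si,\xi;u)\ges V(\si,\xi)=\dbE\lan P(\si)\xi,\xi\ran$ for all $u$, and specializing to $u=u^*|_{[\si,\t]}$, $v^*=u^*|_{[\t,T]}$ (legitimate by \autoref{crlry:Bellman-Principle}) turns the inequality into an equality, proving (iii) and (iv) at once. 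Your dynamic-programming outline for (iii)--(iv) is essentially this argument and is fine; your worry that $X_1(\t)$ is only square-integrable while \rf{V-property-1*} was stated for bounded data is a legitimate technical point that the paper passes over, and your suggested route via \autoref{crlry:V=Y(t)X(t)} is a reasonable way to close it.
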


\begin{proof}
Fix an arbitrary stopping time $\si\in\cS[0,\t)$. For $\xi\in L^\i_{\cF_\si}(\Om;\dbR^n)$ and $u\in\cU[\si,\t]$,
let $X_1=\{X_1(s);\si\les s\les\t\}$ denote the corresponding solution to \rf{X-tau-kappa}.
Consider Problem (SLQ) for the initial pair $(\t,X_1(\t))$.
Since there exists a constant $\d>0$ such that \rf{convexity:uni} holds, Problem (SLQ) is solvable at $\t$
(\autoref{prop:t-tao}), and from \rf{V-property-1*} we see that
$$ \h V(\t,X_1(\t))=\lan P(\t)X_1(\t),X_1(\t)\ran. $$
Let $v^*\in\cU[\t,T]$ be an optimal control of Problem (SLQ) with respect to $(\t,X_1(\t))$ and
let $X_2^*=\{X_2^*(s);\t\les s\les T\}$ be the corresponding optimal state process. Define
$$[u\oplus v^*](s)=\left\{\begin{aligned}
    & u(s),   && s\in[\si,\t), \\
    & v^*(s), && s\in[\t,T].
\end{aligned}\right.$$
Obviously, the process $u\oplus v^*$ is in $\cU[\si,T]$, and the solution $X=\{X(s);\si\les s\les T\}$ to
$$\left\{\begin{aligned}
   dX(s) &=\{A(s)X(s)+B(s)[u\oplus v^*](s)\}ds \\
         &\hp{=\ } +\{C(s)X(s)+D(s)[u\oplus v^*](s)\}dW(s), \q s\in[\si,T],\\
    X(\si) &=\xi
\end{aligned}\right.$$
is such that
$$ X(s)=\left\{\begin{aligned}
    & X_1(s),   && s\in[\si,\t), \\
    & X_2^*(s), && s\in[\t,T].
\end{aligned}\right.$$
It follows that
\begin{align}\label{J-Jtau}
J(\si,\xi;u\oplus v^*)
  &= \dbE\[\lan GX(T),X(T)\ran
     +\int_\si^T\llan\1n\begin{pmatrix}Q(s)&\1nS(s)^\top \\ S(s)&\1nR(s)\end{pmatrix}\1n
                       \begin{pmatrix}X(s) \\ [u\oplus v^*](s)\end{pmatrix}\1n,
                       \begin{pmatrix}X(s) \\ [u\oplus v^*](s)\end{pmatrix}\1n\rran ds \] \nn\\
  &= \dbE\[\lan GX_2^*(T),X_2^*(T)\ran
     +\int_\t^T\llan\1n\begin{pmatrix}Q(s)&\1nS(s)^\top \\ S(s)&\1n R(s)\end{pmatrix}\1n
                       \begin{pmatrix}X_2^*(s) \\ v^*(s)\end{pmatrix}\1n,
                       \begin{pmatrix}X_2^*(s) \\ v^*(s)\end{pmatrix}\1n\rran ds\] \nn\\
  &\hp{=\ } +\dbE\[\int_\si^\t\llan\1n\begin{pmatrix}Q(s)&\1nS(s)^\top \\ S(s)&\1nR(s)\end{pmatrix}\1n
                                      \begin{pmatrix}X_1(s) \\ u(s)\end{pmatrix}\1n,
                                      \begin{pmatrix}X_1(s) \\ u(s)\end{pmatrix}\1n\rran ds \] \nn\\
  &= J(\t,X_1(\t);v^*) +\dbE\[\int_\si^\t\llan\1n\begin{pmatrix}Q(s)&\1nS(s)^\top \\ S(s)&\1nR(s)\end{pmatrix}\1n
                                                \begin{pmatrix}X_1(s) \\ u(s)\end{pmatrix}\1n,
                                                \begin{pmatrix}X_1(s) \\ u(s)\end{pmatrix}\1n\rran ds \] \nn\\
  &= \dbE\lan P(\t)X_1(\t),X_1(\t)\ran
     +\dbE\[\int_\si^\t\llan\1n\begin{pmatrix}Q(s)&\1nS(s)^\top \\ S(s)&\1nR(s)\end{pmatrix}\1n
                                \begin{pmatrix}X_1(s) \\ u(s)\end{pmatrix}\1n,
                                \begin{pmatrix}X_1(s) \\ u(s)\end{pmatrix}\1n\rran ds\] \nn\\
  &= J^\t(\si,\xi;u).
\end{align}
In particular, taking $\xi=0$ yields
$$J^\t(\si,0;u) =J(\si,0;u\oplus v^*) \ges \d\,\dbE\int_\si^T|[u\oplus v^*](s)|^2ds \ges \d\,\dbE\int_\si^\t|u(s)|^2ds. $$
This proves the first assertion.

\ms

The second assertion follows directly from (i) and  \autoref{crlry:existence-iff} (ii).

\ms

Finally, we look at (iii) and (iv). Observe first that relation \rf{J-Jtau} implies that
\bel{Jtau>} J^\t(\si,\xi;u) \ges \dbE\lan P(\si)\xi,\xi\ran. \ee
Suppose now that $u^*\in\cU[\si,T]$ is an optimal control of Problem (SLQ) at $(\si,\xi)$. Let $X^*=\{X^*(s);\si\les s\les T\}$ be the corresponding optimal state process,
that is, $X^*$ is the solution to
$$\left\{\begin{aligned}
   dX^*(s) &=\{A(s)X^*(s)+B(s)u^*(s)\}ds+\{C(s)X^*(s)+D(s)u^*(s)\}dW(s), \q s\in[\si,T],\\
    X^*(\si) &=\xi.
\end{aligned}\right.$$
Then by the principle of optimality (\autoref{crlry:Bellman-Principle}), the restriction $u^*|_{[\t,T]}$ of $u^*$ to $[\t,T]$ is optimal for Problem (SLQ) at $(\t,X^*(\t))$.
Replacing the processes $u$ and $v^*$ in \rf{J-Jtau} by $u^*|_{[\si,\t]}$ and $u^*|_{[\t,T]}$, respectively,
and noting that $u^*|_{[\si,\t]}\oplus u^*|_{[\t,T]}=u^*$, we obtain
\bel{Jtau=} J^\t(\si,\xi;u^*|_{[\si,\t]}) = J(\si,\xi;u^*) = \dbE\lan P(\si)\xi,\xi\ran. \ee
The last two assertions follow immediately from \rf{Jtau>} and \rf{Jtau=}.
\end{proof}

\begin{theorem}\label{thm:P-continuity} \sl
Under the hypotheses of \autoref{prop:t-tao}, the process $P=\{P(t);0\les t\les T\}$
in \rf{V-property-1*} is bounded and left-continuous.
\end{theorem}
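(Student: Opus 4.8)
The plan is to prove boundedness and left-continuity of $P$ separately, in both cases by combining the quadratic representation $\h V(t,\xi)=\lan P(t)\xi,\xi\ran$ (\autoref{thm:V=Pxx}) with \emph{conditional} versions --- with $\dbE[\,\cdot\,|\cF_t]$ in place of $\dbE$ --- of facts already in hand. The first preparatory observation is that, running the It\^o computations in the proofs of \autoref{lmm:J-rep} and \autoref{thm:rep-cost} under $\dbE[\,\cdot\,|\cF_t]$ (the stochastic integrals are martingales on $[t,T]$, hence still drop out), one obtains for every deterministic $x\in\dbR^n$ and every $u\in\cU[t,T]$
\bel{plan-rep}
  \h J(t,x;u)=\lan M(t)x,x\ran+2\,\dbE\lt[\int_t^T\lan[\cL_tx](s),u(s)\ran ds\,\Big|\,\cF_t\rt]+\h J(t,0;u),
\ee
with $M$ as in \rf{BSDE-MN} and $\cL_t$ as in \rf{cL-def}; in particular $\h J(t,x;0)=\lan M(t)x,x\ran$. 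The second is that the unconditional bounds $J(t,0;u)\ges\d\,\dbE\int_t^T|u|^2ds$ (\autoref{prop:t-tao}) and $[\![\cL_tx,\cL_tx]\!]\les K\dbE|x|^2$ (\rf{uni-bound-Lt}) upgrade to
\bel{plan-conv}
  \h J(t,0;u)\ges\d\,\dbE\lt[\int_t^T|u(s)|^2ds\,\Big|\,\cF_t\rt],\qq
  \dbE\lt[\int_t^T|[\cL_tx](s)|^2ds\,\Big|\,\cF_t\rt]\les K|x|^2,\q\as
\ee
on replacing $x,u$ by $x{\bf1}_\G,u{\bf1}_\G$ with $\G\in\cF_t$ arbitrary and using that cutting the initial datum and the control by ${\bf1}_\G$ cuts the whole state--cost functional by ${\bf1}_\G$ --- the device of the proof of \autoref{thm:equivalence-SLQ12}.

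\textbf{Boundedness} is then immediate. Upper bound: $\lan P(t)x,x\ran=\h V(t,x)\les\h J(t,x;0)=\lan M(t)x,x\ran\les C|x|^2$, the last step by \autoref{prop:M-bounded}. Lower bound: inserting the pointwise inequality $2\lan[\cL_tx],u\ran\ges-\d^{-1}|[\cL_tx]|^2-\d|u|^2$ and \rf{plan-conv} into \rf{plan-rep} gives, for \emph{every} $u\in\cU[t,T]$,
$$\h J(t,x;u)\ges\lan M(t)x,x\ran-\d^{-1}\dbE\lt[\int_t^T|[\cL_tx](s)|^2ds\,\Big|\,\cF_t\rt]\ges-(C+K\d^{-1})|x|^2,$$
so $\lan P(t)x,x\ran\ges-(C+K\d^{-1})|x|^2$, a.s. Letting $x$ run over a countable dense subset of $\dbR^n$ and using symmetry of $P(t)$ yields $|P(t)|\les C'$ a.s.\ for every $t$, with $C'$ independent of $t$.

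\textbf{Left-continuity.} Fix $t\in(0,T]$ and $x\in\dbR^n$; for $s<t$ apply \autoref{prop:SLQtau} with the stopping time taken to be the constant time $t$ (legitimate since $P(t)$ is now known to be bounded): Problem (SLQ)$^t$ on $[s,t]$ --- minimizing $\dbE[\lan P(t)X(t),X(t)\ran+\int_s^t(\cdots)ds]$ over solutions of \rf{X-tau-kappa} --- is uniformly convex with the same $\d$, and (the conditional form of \autoref{prop:SLQtau} (iv), again by the ${\bf1}_\G$ device) its stochastic value flow is $\h V^t(s,x)=\lan P(s)x,x\ran$. Let $(\bar X,\bar Y,\bar Z)$ solve on $[s,t]$ the decoupled FBSDE with the drift/diffusion of \rf{FBSDE-barXYZ}, $\bar X(s)=x$, and terminal condition $\bar Y(t)=P(t)\bar X(t)$. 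Applying the boundedness argument above to Problem (SLQ)$^t$ --- whose $M$-process evaluated at $x$ is $\h J^t(s,x;0)=\dbE[\lan P(t)\bar X(t),\bar X(t)\ran+\int_s^t\lan Q\bar X,\bar X\ran dr\,|\,\cF_s]$ --- gives
$$\h J^t(s,x;0)-\d^{-1}\dbE\lt[\int_s^t\big|B^\top\bar Y+D^\top\bar Z+S\bar X\big|^2dr\,\Big|\,\cF_s\rt]\ \les\ \lan P(s)x,x\ran\ \les\ \h J^t(s,x;0).$$
As $s\uparrow t$: $\dbE|\bar X(t)-x|^2=O(t-s)$; $\bar Y(s)\to P(t)x$ in $L^2$ (here one uses $\cF_s\uparrow\cF_t$, valid for the Brownian filtration); hence, by It\^o's formula for $|\bar Y|^2$ on $[s,t]$, also $\dbE\int_s^t|\bar Z|^2dr\to0$; and $\dbE\int_s^t|\bar X|^2dr\to0$ while $\dbE\int_s^t|\bar Y|^2dr\les(t-s)\,\dbE\sup_{s\les r\les t}|\bar Y(r)|^2\to0$. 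Since $P(t),Q,B,D,S$ are bounded, both outer terms of the display converge to $\lan P(t)x,x\ran$ in $L^1$, so $\lan P(s)x,x\ran\to\lan P(t)x,x\ran$ in $L^1$ as $s\uparrow t$. Polarizing over a countable dense set of $x$ upgrades this to $P(s)\to P(t)$ in $L^1(\Om;\dbS^n)$, the asserted left-continuity of $t\mapsto P(t)$.

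The conditional bookkeeping in \rf{plan-rep}--\rf{plan-conv} and in \autoref{prop:SLQtau} is routine, every object being linear in $(\xi,u)$, so the ${\bf1}_\G$-truncation goes through verbatim. The one genuinely delicate point is the last limit above, specifically $\dbE\int_s^t|\bar Z(r)|^2dr\to0$: the a priori BSDE bound only gives $\dbE\int_s^t|\bar Z|^2dr\les K\,\dbE|P(t)\bar X(t)|^2$, which does \emph{not} vanish, so one must use the energy identity
$$\dbE\int_s^t|\bar Z|^2dr=\dbE|\bar Y(t)|^2-\dbE|\bar Y(s)|^2+2\,\dbE\int_s^t\lan\bar Y,A^\top\bar Y+C^\top\bar Z+Q\bar X\ran dr,$$
absorb the $\bar Z$-term on the right into the left using boundedness of $C$, and invoke the $L^2$-continuities $\bar Y(s)\to P(t)x$ and $\bar Y(t)=P(t)\bar X(t)\to P(t)x$ (which themselves rest on the continuity of the Brownian filtration) to conclude $\dbE|\bar Y(t)|^2-\dbE|\bar Y(s)|^2\to0$. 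Everything else assembles from \autoref{thm:rep-cost}, \autoref{prop:t-tao} and \autoref{prop:SLQtau}.
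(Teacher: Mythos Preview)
Your boundedness argument is essentially the paper's: sandwich $\lan P(t)x,x\ran$ between $\lan M(t)x,x\ran$ above and $\lan M(t)x,x\ran-\d^{-1}\dbE[\int_t^T|[\cL_tx](s)|^2ds\,|\,\cF_t]$ below using the functional representation and the uniform convexity, then invoke boundedness of $M$ (\autoref{prop:M-bounded}). The only cosmetic difference is that you condition first, while the paper writes the unconditional inequalities \rf{upper-bound}--\rf{lower-bound*} and conditions afterwards; both arrive at the same matrix inequality $M(t)-\d^{-1}KI_n\les P(t)\les M(t)$.

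For left-continuity the approaches genuinely diverge, and yours falls short of what is claimed and of what the paper later needs. The paper's device is the representation \rf{Lt-rep*}: since $[\cL_tx](s)=\dbL(s)\dbX^{-1}(t)x$ with the \emph{global} processes $\dbL=B^\top\dbY+D^\top\dbZ+S\dbX$ and $\dbX^{-1}$ fixed once and for all on $[0,T]$, the conditional sandwich becomes
\[
  M(t)-\d^{-1}[\dbX^{-1}(t)]^\top\,\dbE\!\lt[\int_t^T\dbL(s)^\top\dbL(s)\,ds\;\Big|\;\cF_t\rt]\dbX^{-1}(t)\ \les\ P(t)\ \les\ M(t),
\]
and both outer processes are \emph{pathwise} continuous in $t$ (continuity of $M$, of $\dbX^{-1}$, and of the Brownian martingale $t\mapsto\dbE[\int_0^T\dbL^\top\dbL\,ds\,|\,\cF_t]$). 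Since they coincide at $T$ (respectively at $\t$, when one reruns the argument for Problem (SLQ)$^\t$), one obtains almost-sure, sample-path left-continuity of $P$.

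Your route---re-solving the FBSDE $(\bar X,\bar Y,\bar Z)$ on each shrinking interval $[s,t]$ and pushing the bounds to $\lan P(t)x,x\ran$ via energy identities---only delivers $L^1$-convergence, as you yourself state. That is not the assertion: ``left-continuous'' here means the sample paths are left-continuous, and this is exactly how the theorem is used in \autoref{thm:BX-invertible}, where one lets $t\uparrow\t$ with $\t$ a \emph{stopping time} to force the contradiction $P(\t)=P(\t)+H$. $L^1$-continuity along deterministic times does not license that limit. The missing idea is to tie the two sides of the sandwich to a single pair of continuous processes defined globally, rather than to solutions of FBSDEs that change with $s$; this is precisely what the explicit formula \rf{Lt-rep*} provides and what your argument lacks.
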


\begin{proof}
We first prove that $P$ is bounded. By \autoref{prop:t-tao}, for any $t\in[0,T)$,
the operator $\cN_t$ defined by \rf{cN-def} satisfies
\begin{equation}\label{Nt>delta}
[\![ \cN_tu,u ]\!] = J(t,0;u) \ges \d\,[\![ u,u ]\!], \q\forall u\in \cU[t,T].
\end{equation}
This means $\cN_t$ is positive and invertible.
By \autoref{crlry:existence-iff} (ii), for any initial state $\xi\in L^\i_{\cF_t}(\Om;\dbR^n)$,
the corresponding optimal control is given by $u^*_{t,\xi} = -\cN_t^{-1}\cL_t\xi$.
Substituting $u^*_{t,\xi}$ into the representation \rf{J-rep} yields
\bel{P-rep}\dbE\lan P(t)\xi,\xi\ran =V(t,\xi) = \dbE\lan M(t)\xi,\xi\ran -[\![\cN_t^{-1}\cL_t\xi,\cL_t\xi ]\!],\ee
from which it follows immediately that
\bel{upper-bound}\dbE\lan P(t)\xi,\xi\ran \les \dbE\lan M(t)\xi,\xi\ran.\ee
On the other hand, combining \rf{Nt>delta} and \rf{P-rep}, together with
\rf{uni-bound-Lt}, we obtain
\bel{lower-bound*}\ba{ll}
\ns\ds\dbE\lan P(t)\xi,\xi\ran \ges \dbE\lan M(t)\xi,\xi\ran -\d^{-1}[\![ \cL_t\xi,\cL_t\xi ]\!]\\
\ns\ds\qq\qq\q\ges \dbE\lan M(t)\xi,\xi\ran -\d^{-1}K\,\dbE|\xi|^2
                                             = \dbE\lan [M(t)-\d^{-1}K I_n]\xi,\xi\ran.\ea\ee
Since $\xi\in L^\i_{\cF_t}(\Om;\dbR^n)$ is arbitrary, we conclude that
$$M(t)-\d^{-1}K I_n \les P(t) \les M(t).$$
The boundedness of $P$ follows by noting that $M$ is bounded (\autoref{prop:M-bounded}).

\ms

We next show that $P$ is left-continuous. Without loss of generality, we consider only the left-continuity at $t=T$.
The case of $t\in(0,T)$ can be treated in a similar manner by considering Problem (SLQ)$^t$.
We notice first that, thanks to \rf{upper-bound} and \rf{lower-bound*}, for any initial pair
$(t,\xi)\in[0,T)\times L^\i_{\cF_t}(\Om;\dbR^n)$,
\begin{equation}\label{7-24}
\dbE\lan M(t)\xi,\xi\ran -\d^{-1}[\![ \cL_t\xi,\cL_t\xi ]\!] \les \dbE\lan P(t)\xi,\xi\ran \les \dbE\lan M(t)\xi,\xi\ran.
\end{equation}
Using \rf{Lt-rep*} and denoting $\dbL(s)=B(s)^\top\dbY(s)+D(s)^\top\dbZ(s)+S(s)\dbX(s)$,
we can rewrite $[\![ \cL_t\xi,\cL_t\xi ]\!]$ as
$$[\![ \cL_t\xi,\cL_t\xi ]\!] = \dbE\int_t^T\llan[\dbL(s)\dbX^{-1}(t)]^\top[\dbL(s)\dbX^{-1}(t)]\xi,\xi\rran ds. $$
Since $M(t)$, $P(t)$, and $X(t)$ are $\cF_t$-measurable and $\xi\in L^\i_{\cF_t}(\Om;\dbR^n)$ is arbitrary,
we can take conditional expectations with respect to $\cF_t$ in \rf{7-24} to obtain
$$M(t) -\d^{-1}[\dbX^{-1}(t)]^\top\dbE\[\int_t^T\dbL(s)^\top\dbL(s)ds\bigm|\cF_t\]\dbX^{-1}(t) \les P(t) \les M(t).$$
Letting $t\uparrow T$ and using the conditional dominated convergence theorem, we obtain
$$ \lim_{t\uparrow T}P(t) =\lim_{t\uparrow T}M(t) =G =P(T). $$
The proof is completed.
\end{proof}

\begin{corollary} \sl
Let {\rm\ref{(A1)}--\ref{(A2)}} hold. Suppose \rf{convexity:uni} holds. Then the stochastic value flow $\h V(\cd\,,\cd)$ of Problem {\rm(SLQ)} admits the following form over $\cD$:
$$ \h V(t,\xi)=\lan P(t)\xi,\xi\ran, \q\forall(t,\xi)\in\cD. $$
\end{corollary}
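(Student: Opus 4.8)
The plan is to deduce the identity on general $L^2$ initial states from the version on \emph{bounded} ones, which is already available. Since the uniform convexity condition \rf{convexity:uni} is in force, \autoref{prop:t-tao} shows that Problem (SLQ) is uniquely solvable at every $\dbF$-stopping time, so \autoref{thm:V=Pxx} (together with the remark following it, which permits the initial time to be a stopping time) yields, for every $(t,\xi)\in\cD$ with $\xi\in L^\i_{\cF_t}(\Om;\dbR^n)$, the representation $\h V(t,\xi)=\lan P(t)\xi,\xi\ran$; and \autoref{thm:P-continuity} guarantees that $P$ is a bounded process, so that $\lan P(t)\xi,\xi\ran$ is a well-defined integrable random variable for \emph{every} $\xi\in L^2_{\cF_t}(\Om;\dbR^n)$. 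Thus the only real content of the corollary is the removal of the boundedness restriction on $\xi$, and the natural tool for this is an $\cF_t$-localization argument of the type already used in the proof of \autoref{thm:equivalence-SLQ12}.

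The key intermediate step I would isolate is the \emph{localization property}: for every $(t,\xi)\in\cD$ and every $\G\in\cF_t$,
$$\h V(t,{\bf1}_\G\xi)={\bf1}_\G\,\h V(t,\xi)\qq\as$$
To prove it, let $(X^*,u^*)$ be an optimal pair for $(t,\xi)$ (which exists by \autoref{prop:t-tao}) with associated adjoint processes $(Y^*,Z^*)$, so that $(X^*,Y^*,Z^*)$ solves \rf{SLQ:FBSDE} and the stationarity relation \rf{stationarity} holds. Multiplying \rf{SLQ:FBSDE} and \rf{stationarity} by the time-independent, $\cF_t$-measurable indicator ${\bf1}_\G$ shows that $({\bf1}_\G X^*,{\bf1}_\G Y^*,{\bf1}_\G Z^*)$ is the adapted solution of \rf{SLQ:FBSDE} for the initial state ${\bf1}_\G\xi$ and the control ${\bf1}_\G u^*$, and that it again satisfies \rf{stationarity}; since the convexity condition in \autoref{thm:SLQ-open-kehua}\,(i) is insensitive to the initial state, \autoref{thm:SLQ-open-kehua} yields that ${\bf1}_\G u^*$ is optimal for $(t,{\bf1}_\G\xi)$. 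Because the corresponding state process is ${\bf1}_\G X^*$ and $L(t,\cd\,;\cd)$ is homogeneous quadratic in the state-control pair, one has $L(t,{\bf1}_\G\xi;{\bf1}_\G u^*)={\bf1}_\G L(t,\xi;u^*)$, and therefore, since $\G\in\cF_t$,
$$\h V(t,{\bf1}_\G\xi)=\h J(t,{\bf1}_\G\xi;{\bf1}_\G u^*)=\dbE\big[{\bf1}_\G L(t,\xi;u^*)\,\big|\,\cF_t\big]={\bf1}_\G\,\h J(t,\xi;u^*)={\bf1}_\G\,\h V(t,\xi).$$

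With the localization property in hand, the corollary follows quickly. Given $(t,\xi)\in\cD$, set $\G_k=\{\,k-1\les|\xi|<k\,\}\in\cF_t$ for $k\ges1$; these sets partition $\Om$ up to a null set, and each ${\bf1}_{\G_k}\xi$ lies in $L^\i_{\cF_t}(\Om;\dbR^n)$. Applying the localization property and then \autoref{thm:V=Pxx} on each piece gives
$${\bf1}_{\G_k}\,\h V(t,\xi)=\h V(t,{\bf1}_{\G_k}\xi)=\lan P(t){\bf1}_{\G_k}\xi,{\bf1}_{\G_k}\xi\ran={\bf1}_{\G_k}\lan P(t)\xi,\xi\ran,\qq k\ges1,$$
and summing over $k$ yields $\h V(t,\xi)=\lan P(t)\xi,\xi\ran$ a.s.; taking expectations also recovers $V(t,\xi)=\dbE\lan P(t)\xi,\xi\ran$.

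The step I expect to require the most care is the localization property, specifically the verification that multiplying the entire optimality system (the forward--backward equation together with the stationarity condition) by an $\cF_t$-measurable indicator again produces an optimality system — this is exactly where \autoref{thm:SLQ-open-kehua} and the state-independence of its convexity condition are essential. A secondary point, which I would dispatch in a sentence, is that all the ingredients invoked (\autoref{thm:SLQ-open-kehua}, \autoref{prop:t-tao}, \autoref{thm:V=Pxx}, \autoref{thm:P-continuity}) remain valid with a stopping time as initial time, as $\cD$ permits $t\in\cS[0,T]$; this is routine in view of the remark following \autoref{thm:V=Pxx}.
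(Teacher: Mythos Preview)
Your proof is correct. The paper's own proof is a two-sentence sketch: it simply notes that $P$ is bounded by \autoref{thm:P-continuity} and asserts that the representation \rf{V-property-1*} therefore ``extends'' from $L^\i_{\cF_t}(\Om;\dbR^n)$ to $L^2_{\cF_t}(\Om;\dbR^n)$, without specifying the mechanism. Your argument supplies a concrete and self-contained mechanism---the $\cF_t$-localization property $\h V(t,{\bf1}_\G\xi)={\bf1}_\G\,\h V(t,\xi)$, proved via the FBSDE characterization of optimality (\autoref{thm:SLQ-open-kehua}) exactly as in the proof of \autoref{thm:equivalence-SLQ12}---followed by a partition of $\Om$ into the level sets $\{k-1\les|\xi|<k\}$. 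This is a genuinely different route from the implicit density/continuity argument the paper seems to have in mind, and it has the advantage of being purely algebraic once the localization property is in hand: you never need to argue that $\xi\mapsto\h V(t,\xi)$ is continuous on $L^2_{\cF_t}(\Om;\dbR^n)$, which a density argument would require. The paper's approach, by contrast, would presumably rely on the explicit formula $V(t,\xi)=\dbE\lan M(t)\xi,\xi\ran-[\![\cN_t^{-1}\cL_t\xi,\cL_t\xi]\!]$ (or on continuous dependence of the optimal control on $\xi$) to pass to the limit, which is also valid but leaves more to the reader.
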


\begin{proof}
By Theorem \ref{thm:P-continuity}, the process $P$ is bounded. Hence, we may extend the representation
\rf{V-property-1*} from $L^\i_{\cF_t}(\Om;\dbR^n)$ to $\cX_t\equiv L^2_{\cF_t}(\Om;\dbR^n)$.
\end{proof}

\section{Riccati Equation and Closed-Loop Representation}\label{sec:SRE}

In this section we establish the solvability of the stochastic Riccati equation (SRE, for short)
\bel{Ric}\left\{\begin{aligned}
   dP(t) &=-\big[PA+A^\top P +C^\top PC +\L C+C^\top\L +Q \\
         &\hp{=-\[} -(PB +C^\top PD +\L D +S^\top)(R+D^\top PD)^{-1}(B^\top P +D^\top PC +D^\top\L +S)\big]dt\\
         &\qq~+\L dW(t),\qq\qq\qq\qq t\in[0,T],\\
    P(T) &=G,
\end{aligned}\right.\ee
and derive the closed-loop representation of (open-loop) optimal controls. We have seen from previous sections that the convexity
\begin{equation}\label{7-24:convex}
J(0,0;u)= [\![ \cN_0u,u ]\!] \ges 0, \q\forall u\in\cU[0,T]
\end{equation}
is necessary for the solvability of Problem (SLQ) (\autoref{crlry:existence-iff} (i)),
and that the uniform convexity \rf{convexity:uni}, a slightly stronger condition than \rf{7-24:convex},
is sufficient for the existence of an optimal control for any initial pair (\autoref{prop:t-tao}).
In this section we shall prove that the SRE \rf{Ric} is uniquely solvable under \rf{convexity:uni}
and that the first component of its solution is exactly the process $P$ appeared in \rf{V-property-1*}.
As a by-product, the (open-loop) optimal control is represented as a linear feedback of the state.

\medskip

The main result of this section can be stated as follows.

\begin{theorem}\label{thm:main} \sl
Let {\rm\ref{(A1)}--\ref{(A2)}} hold. Suppose \rf{convexity:uni} holds.
Then Problem {\rm(SLQ)} is uniquely solvable and the SRE \rf{Ric} admits a unique adapted solution $(P,\L)$, and for some $\l>0$, the following holds:
\bel{7-28:uniconv}R+D^\top PD\ges\l I_m,\qq\ae~\hb{on}~[0,T],~\as \ee
Moreover, the unique optimal control $u^*_{t,\xi}=\{u^*_{t,\xi}(s);t\les s\les T\}$ corresponding to any $(t,\xi)\in\cS[0,T)\times L^\i_{\cF_t}(\Om;\dbR^n)$ takes the following linear state feedback form:
$$u^*_{t,\xi}(s) = \Th(s)X^*(s); \q s\in[t,T],$$
where $\Th$ is defined by
\bel{7-29:Th} \Th=-(R+D^\top PD)^{-1}(B^\top P +D^\top PC +D^\top\L +S), \ee
and $X^*=\{X^*(s);t\les s\les T\}$ is the solution of the
closed-loop system
$$\left\{\begin{aligned}
  dX^*(s) &=[A(s)+B(s)\Th(s)]X^*(s)ds +[C(s)+D(s)\Th(s)]X^*(s) dW(s), \q s\in[t,T],\\
   X^*(t) &=\xi.
\end{aligned}\right.$$
\end{theorem}

Because some preparations are needed, we defer the proof of \autoref{thm:main} to the end of this section.
The preparation for the proof starts with the following result,
which plays a crucial role in the sequel.

\begin{theorem}\label{thm:BX-invertible} \sl
Let {\rm\ref{(A1)}--\ref{(A2)}} hold, and let $e_1,\cdots,e_n$ be the standard basis for $\dbR^n$.
Suppose \rf{convexity:uni} holds.
Let $X_i=\{X_i(s);0\les s\les T\}$ be the (unique) optimal state process with respect to the initial
pair $(t,\xi)=(0,e_i)$.
Then the $\dbR^{n\times n}$-valued process $\BX=\{\BX(s)\deq(X_1(s),\ldots,X_n(s));0\les s\les T\}$ is invertible.
\end{theorem}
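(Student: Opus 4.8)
The goal is to show that the matrix-valued process $\BX$, assembled from the optimal state processes started from the standard basis vectors at time $0$, is almost surely invertible for every $s\in[0,T]$. The natural strategy is to fix a time $s_0\in(0,T]$ and show $\BX(s_0)$ is invertible by a contradiction argument: if $\BX(s_0)$ were singular on a set of positive probability, there would be a random vector $\eta$, nonzero on that set, that is killed by $\BX(s_0)^\top$ (or by $\BX(s_0)$), and I want to derive from this a violation of the uniform convexity \rf{convexity:uni}, or of the uniqueness of optimal controls established in \autoref{prop:t-tao} / \autoref{crlry:existence-iff}. The idea is that $\BX(\cd)\eta$ and $\BU(\cd)\eta$ (with $\BU$ the associated optimal controls, as in \autoref{prop:ux=Ux}) form an optimal pair for the initial pair $(0,\eta)$ when $\eta\in L^\i_{\cF_0}(\Om;\dbR^n)$; if this optimal state hits zero at time $s_0$, I would like to splice it with the identically-zero trajectory on $[s_0,T]$ and get a contradiction.

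More concretely, here is the sequence of steps I would carry out. First, invoke \autoref{prop:ux=Ux}: Problem (SLQ) is solvable at each $(0,e_i)$ by \autoref{prop:t-tao}, so $(\BX,\BU,\BY,\BZ)$ solves the stated linear FBSDE with $\BX(0)=I_n$ and the stationarity relation \rf{6-12-stationarity} holds; moreover $(\BX\xi,\BU\xi)$ is optimal for $(0,\xi)$ for every bounded $\cF_0$-measurable $\xi$. Second, suppose for contradiction that $\dbP(\det\BX(s_0)=0)>0$ for some $s_0\in(0,T]$. Since $\cF_0$ is trivial up to null sets (the Brownian filtration augmented), $\BX(0)=I_n$ is invertible, so $s_0>0$; and I should localize: let $\t=\inf\{s>0:\det\BX(s)=0\}\wedge T$ and work on the event $\{\t<T\}$ (or $\{\t\le T\}$) which by assumption has positive probability — though one subtlety is that $\t$ is an $\cF_s$-stopping time, not $\cF_0$-measurable, so I cannot directly use it as an initial time with an $\cF_0$ initial condition. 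Instead, third, I would pick a rational $s_0$ and a deterministic approach: on the set $\{\det\BX(s_0)$ has a nontrivial kernel$\}$ choose a measurable unit vector $v(\om)\in\ker\BX(s_0)^\top$; then the process $\h Y(s)\deq\BX(s)^\top$ evaluated appropriately... — actually the cleaner route is to use the invertibility of $\dbX$ from the Remark after the Proposition in \autoref{sec:Hilbert-View}. The fundamental solution $\dbX$ of $d\dbX=A\dbX\,ds+C\dbX\,dW$ with $\dbX(0)=I_n$ is invertible, and $\BX$ differs from $\dbX$ by the contribution of the controls $\BU$; so $\BX=\dbX\big(I_n+\int_0^\cd \dbX^{-1}(s)[\cdots]ds+\cdots\big)$, and invertibility of $\BX$ is equivalent to invertibility of that bracketed factor.

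I expect the main obstacle to be precisely this splicing/measurability issue: turning "the optimal trajectory from $(0,\eta)$ vanishes at a (random) time $\t$" into a genuine contradiction with uniqueness, given that $\t$ is not $\cF_0$-measurable and $\eta$ lives in the kernel of a random matrix. The way I would resolve it is to reduce to deterministic $s_0$ and a bounded $\cF_{s_0}$-measurable vector: if $\BX(s_0)\eta_0=0$ for some bounded $\cF_{s_0}$-measurable $\eta_0$ nonzero on a positive-probability set, then by \autoref{crlry:Bellman-Principle} (time-consistency) the restriction of the optimal control to $[s_0,T]$ is optimal for $(s_0,\BX(s_0)\eta_0)=(s_0,0)$ on that event — but the optimal control for initial state $0$ is $0$ by the uniform convexity (uniqueness), forcing $\BU(s)\eta_0=0$ and $\BX(s)\eta_0\equiv 0$ on $[s_0,T]$, in particular $\BZ,\BY$ vanish there too; then running the FBSDE *backward* in time from $s_0$ with the stationarity relation, together with uniqueness of the backward SDE, should force $\BX(s)\eta_0\equiv0$ on all of $[0,s_0]$ as well, contradicting $\BX(0)\eta_0=\eta_0\ne0$. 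The delicate point is justifying the backward propagation of the zero set — this is where one genuinely uses that $(R+D^\top PD)$-type coercivity, or more directly the uniform convexity applied to the zero-extension of $\BU\eta_0$ restricted to $[s_0,T]$, exactly as in the proof of \autoref{prop:t-tao} — so I would lean on \rf{convexity:uni} one more time rather than on an independent backward-uniqueness argument.
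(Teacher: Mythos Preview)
Your forward half is fine: once you fix a bounded $\cF_{s_0}$-measurable $\eta_0$ with $\BX(s_0)\eta_0=0$ on the bad event, the stationarity relation \rf{6-12-stationarity} and \autoref{thm:SLQ-open-kehua} show that $\BU(\cd)\eta_0|_{[s_0,T]}$ is the unique optimal control for the initial pair $(s_0,0)$, hence zero, and then $\BX(\cd)\eta_0$, $\BY(\cd)\eta_0$, $\BZ(\cd)\eta_0$ all vanish on $[s_0,T]$. The gap is the backward step. Because $\eta_0$ is $\cF_{s_0}$-measurable and not $\cF_0$-measurable, the process $s\mapsto\BX(s)\eta_0$ on $[0,s_0)$ is \emph{not adapted} to $\dbF$ and satisfies no forward SDE; there is no FBSDE on $[0,s_0]$ to which a uniqueness result applies, and no ``backward SDE for $\BY\eta_0$'' either, for the same reason. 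Your proposed fix --- applying \rf{convexity:uni} to the zero-extension of $\BU\eta_0|_{[s_0,T]}$ --- collapses because that extension is identically zero. Nothing in the uniform convexity lets you transport information attached to an anticipating vector $\eta_0$ back to times before $s_0$; you correctly flagged this measurability issue early on, but the final paragraph does not resolve it.

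The paper sidesteps the adaptedness trap by comparing \emph{value processes} rather than trajectories. It takes the first singular time $\t$, but instead of a kernel vector it picks an $\cF_\t$-measurable positive semi-definite matrix $H$ with $H\BX(\t)=0$ and $|H|=1$ on the bad event, and perturbs the terminal cost of the stopped problem (SLQ)$^\t$ by $\dbE\lan HX(\t),X(\t)\ran$. Since $H\BX(\t)=0$, each $u_i|_{[0,\t]}$ is optimal for \emph{both} the original and the perturbed stopped problem (the extra term vanishes along the optimal trajectory). Both problems carry bounded left-continuous value processes $P$ and $\bar P$ by \autoref{thm:V=Pxx} and \autoref{thm:P-continuity}; and because $\BX(t)$ is invertible for $t<\t$, the optimal states $\BX(t)x$ exhaust all initial directions, forcing $\BX(t)^\top P(t)\BX(t)=\BX(t)^\top\bar P(t)\BX(t)$ and hence $P(t)=\bar P(t)$ on $[0,\t)$. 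Left-continuity then gives $P(\t)=\bar P(\t)=P(\t)+H$, the contradiction. The passage through value functions --- objects defined for \emph{all} initial data, not just along one random trajectory --- is exactly what replaces the backward-propagation argument you are missing.
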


\begin{proof}
Let $u_i\in\cU[0,T]$ be the unique optimal control with respect to $(0,e_i)$ so that
$$\left\{\begin{aligned}
   dX_i(s) &=(AX_i+Bu_i)ds + (CX_i+Du_i)dW(s), \q s\in[0,T], \\
    X_i(0) &=e_i.
\end{aligned}\right.$$
Then with $\BU(s)=(u_1(s),\ldots,u_n(s))$, we have
\bel{SDE-X}\left\{\begin{aligned}
   d\BX(s) &=(A\BX+B\BU)ds + (C\BX+D\BU)dW(s), \q s\in[0,T], \\
    \BX(0) &=I_n.
\end{aligned}\right.\ee
Define the stopping time (at which $\BX$ is not invertible for the first time)
$$\th(\om)=\inf\{s\in[0,T];~\det(\BX(s,\om))=0\},$$
where we employ the convention that the infimum of the empty set is infinity.
In order to prove that $\BX$ is invertible, it suffices to show that $\dbP(\th=\i)=1$, or equivalently, that the set
$$ \G = \{\om\in\Om;~\th(\om)\les T\}$$
has probability zero. Suppose the contrary and set $\t=\th\wedge T$. Then $\t$ is also a stopping time and $0<\t\les T$.
Since $\t=\th$ on $\G$, by the definition of $\th$, $\BX(\t)$ is not invertible on $\G$. Thus, we can choose an
$\dbS^n$-valued, $\cF_\t$-measurable, positive semi-definite random matrix $H$ with $|H|=1$ on $\G$ such that
$$ H(\om)\BX(\t(\om),\om)=0, \q\forall \om\in\Om. $$
Let $P$ be the bounded, left-continuous process in \rf{V-property-1*}.
We introduce the following auxiliary cost functional:
$$\cl{J^\t}(\si,\xi;u)\deq J^\t(\si,\xi;u)+\dbE\lan HX(\t),X(\t)\ran.$$
Consider the problem of minimizing the above auxiliary cost functional subject to the state equation \rf{X-tau-kappa},
which will be called Problem $\cl{\hb{(SLQ)$^\t$}}$ and whose value flow will be denoted by $\cl{V^\t}(\cd\,,\cd)$.
We have the following facts:
\begin{enumerate}[(1)]
\item {\sl For any $\si\in\cS[0,\t)$,
      \begin{equation*}
         \cl{J^\t}(\si,0;u) \ges J^\t(\si,0;u) \ges \d\,\dbE\int_\si^\t|u(s)|^2ds,
         \q\forall u\in\cU[\si,\t].
      \end{equation*}
      Consequently, both Problems {\rm(SLQ)$^\t$} and $\cl{\hb{\rm(SLQ)$^\t$}}$ are uniquely solvable at any $\si\in\cS[0,\t)$.}

      Indeed, the first inequality is true since $H$ is positive semi-definite,
      and the second inequality is immediate from \autoref{prop:SLQtau} (i).

\item {\sl The restriction $u_i^\t=u_i|_{[0,\t]}$ of $u_i$ to $[0,\t]$ is optimal for both Problems {\rm(SLQ)$^\t$}
      and $\cl{\hb{\rm(SLQ)$^\t$}}$ at the same initial pair $(0,e_i)$.}

      Indeed, the fact that $u_i^\t$ is   optimal for Problem (SLQ)$^\t$ at $(0,e_i)$ is a direct
      consequence of \autoref{prop:SLQtau} (iii).
      According to \autoref{thm:SLQ-open-kehua}, to prove that $u_i^\t$ is also optimal for Problem $\cl{\hb{\rm(SLQ)$^\t$}}$
      at $(0,e_i)$, it suffices to show that the adapted solution $(X_i^\t,Y_i^\t,Z_i^\t)$ to the FBSDE
      \bel{FBSDE-XYZ_itau}\left\{\begin{aligned}
          & dX_i^\t(s)=(AX_i^\t+Bu_i^\t)ds + (CX_i^\t+Du_i^\t)dW(s), \q s\in[0,\t], \\
          & dY_i^\t(s)=-\big(A^\top Y_i^\t + C^\top Z_i^\t + QX_i^\t + S^\top u_i^\t\big)ds + Z_i^\t dW(s), \q s\in[0,\t],\\
          & X_i^\t(0)=e_i, \q Y_i^\t(\t)=[P(\t)+H]X_i^\t(\t)
      \end{aligned}\right.\ee
      satisfies
      \bel{XYZ_itau=0} B^\top Y_i^\t +D^\top Z_i^\t +SX_i^\t + Ru_i^\t =0. \ee
      We observe first that $X_i^\t(s) = \BX(s)e_i$ for $0\les s\les\t$.
      Thus, by the choice of $H$, we have
      \bel{7-26:HXitao=0} HX_i^\t(\t)=H\BX(\t)e_i=0. \ee
      It follows that \rf{FBSDE-XYZ_itau} is equivalent to
      $$\left\{\begin{aligned}
          & dX_i^\t(s)=(AX_i^\t+Bu_i^\t)ds + (CX_i^\t+Du_i^\t)dW(s), \q s\in[0,\t], \\
          & dY_i^\t(s)=-\big(A^\top Y_i^\t + C^\top Z_i^\t + QX_i^\t + S^\top u_i^\t\big)ds + Z_i^\t dW(s), \q s\in[0,\t],\\
          & X_i^\t(0)=e_i, \q Y_i^\t(\t)=P(\t)X_i^\t(\t),
      \end{aligned}\right.$$
      which is exactly the FBSDE associated with Problem (SLQ)$^\t$.
      Since $u_i^\t$ is an optimal control of Problem (SLQ)$^\t$ at $(0,e_i)$,
      we obtain \rf{XYZ_itau=0} by using \autoref{thm:SLQ-open-kehua} again.
\end{enumerate}
By fact (1), for Problem $\cl{\hb{(SLQ)$^\t$}}$ there exists a bounded, left-continuous process
$\bar P=\{\bar P(s);0\les s\les\t\}$ such that
$$\cl{V^\t}(\si,\xi)=\lan\bar P(\si)\xi,\xi\ran, \q\forall (\si,\xi)\in\cS[0,\t)\times L^\i_{\cF_\si}(\Om;\dbR^n).$$
By fact (2), we see that $(X_i^\t,u_i^\t)=\{(X_i^\t(s),u_i^\t(s));0\les s\les \t\}$ is the optimal state-control
pair for both Problem (SLQ)$^\t$ and Problem $\cl{\hb{(SLQ)$^\t$}}$ at $(0,e_i)$. Set
\begin{align*}
   \BX^\t &=\{\BX^\t(s)\deq(X_1^\t(s),\ldots,X_n^\t(s));0\les s\les \t\}, \\
   \BU^\t &=\{\BU^\t(s)\deq(u_1^\t(s),\ldots,u_n^\t(s));0\les s\les \t\},
\end{align*}
and take an arbitrary $x\in\dbR^n$. Then by \autoref{prop:ux=Ux}, $(\BX^\t x,\BU^\t x)$ is the optimal
state-control pair for both Problem (SLQ)$^\t$ and Problem $\cl{\hb{(SLQ)$^\t$}}$ at $(0,x)$.
Furthermore, by the principle of optimality (\autoref{crlry:Bellman-Principle}), the pair
$$(\BX^\t(s)x,\BU^\t(s)x); \q t\les s\les \t$$
remains optimal at $(t,\BX^\t(t)x)$ for any $0\les t<\t$.
Thus, noting that $H\BX^\t(\t)=0$ by \rf{7-26:HXitao=0}, we have
\begin{align*}
\cl{V^\t}(t,\BX^\t(t)x)
   &= \cl{J^\t}(t,\BX^\t(t)x;\BU^\t x) =J^\t(t,\BX^\t(t)x;\BU^\t x) + \dbE\lan H\BX^\t(\t)x,\BX^\t(\t)x\ran \\
   &= J^\t(t,\BX^\t(t)x;\BU^\t x) =\h V(t,\BX^\t(t)x).
\end{align*}
Noting that $\BX^\t(t)=\BX(t)$ for $0\les t\les\t$, we obtain from the above that
$$\lan\bar P(t)\BX(t)x,\BX(t)x\ran =\cl{V^\t}(t,\BX^\t(t)x) =\h V(t,\BX^\t(t)x) =\lan P(t)\BX(t)x,\BX(t)x\ran.$$
Since $x\in\dbR^n$ is arbitrary, it follows that
$$\BX(t)^\top P(t)\BX(t) = \BX(t)^\top\bar P(t)\BX(t); \q 0\les t <\t.$$
By the definition of $\t$, $\BX$ is invertible on $[0,\t)$. Hence,
\bel{P=tiP} P(t) = \bar P(t); \q 0\les t <\t. \ee
On the other hand, $\bar P(\t)= P(\t)+H$, and both $P$ and $\bar P$ are left-continuous.
Letting $t\uparrow\t$ in \rf{P=tiP} then yields a contradiction: $P(\t)= P(\t)+H$, since $|H|=1$ on $\G$.
\end{proof}

The next result establishes the unique solvability of the SRE \rf{Ric}.

\begin{theorem}\label{thm:Riccati} \sl
Let {\rm\ref{(A1)}--\ref{(A2)}} hold. Suppose \rf{convexity:uni} holds.
Then the stochastic Riccati equation \rf{Ric} admits a unique adapted solution
$(P,\L)\in L_\dbF^\i(\Om;C([0,T];\dbS^n))\times L_\dbF^2(0,T;\dbS^n)$ such that
\rf{7-28:uniconv} holds for some constant $\l>0$.
\end{theorem}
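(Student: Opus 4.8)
The plan is to build the solution $(P,\L)$ of \rf{Ric} out of the optimal control problem itself, leaving the uniform positivity \rf{7-28:uniconv} as the last and hardest step. Under \rf{convexity:uni}, Problem (SLQ) is uniquely solvable at every initial time (\autoref{prop:t-tao}); let $(\BX,\BU,\BY,\BZ)$ be the matrix-valued processes of \autoref{prop:ux=Ux} attached to the initial states $(0,e_1),\dots,(0,e_n)$, so that $\BX(0)=I_n$, the four processes solve the matrix FBSDE there, and the stationarity relation \rf{6-12-stationarity} holds. By \autoref{thm:BX-invertible}, $\BX(s)$ is invertible for all $s$; set $\Th^*\deq\BU\BX^{-1}$, the candidate feedback gain.

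First I would identify $P$. Applying It\^o's formula to $s\mapsto\BX(s)^\top\BY(s)$ and using \rf{6-12-stationarity} to substitute for $B^\top\BY+D^\top\BZ$, the drift of $\BX^\top\BY$ turns out to equal $-(\BU^\top S\BX+\BX^\top S^\top\BU+\BU^\top R\BU+\BX^\top Q\BX)$, which is symmetric; since $\BX(T)^\top\BY(T)=\BX(T)^\top G\BX(T)$ is symmetric too, the antisymmetric part $\BX^\top\BY-\BY^\top\BX$ is a local martingale vanishing at $T$, hence $\equiv0$ (the integrability needed being supplied by localization). On the other hand, for deterministic $x\in\dbR^n$ the pair $(\BX x,\BU x)$ is optimal for $(0,x)$ with adjoint $(\BY x,\BZ x)$; by the principle of optimality (\autoref{crlry:Bellman-Principle}) its restriction to $[t,T]$ is optimal for $(t,\BX(t)x)$, so \autoref{crlry:V=Y(t)X(t)} and the quadratic form of the stochastic value flow (\autoref{thm:V=Pxx}, \autoref{thm:P-continuity}) give $\lan\BX(t)^\top\BY(t)x,x\ran=\h V(t,\BX(t)x)=\lan\BX(t)^\top P(t)\BX(t)x,x\ran$ for all $x$. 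As both matrices are symmetric and $\BX(t)$ is invertible, this forces $\BY(t)=P(t)\BX(t)$ for every $t\in[0,T]$; in particular $P=\BY\BX^{-1}$ is a continuous semimartingale with $P(T)=G$, which together with the boundedness from \autoref{thm:P-continuity} shows $P\in L_\dbF^\i(\Om;C([0,T];\dbS^n))$. Applying It\^o's formula to the product $\BY\BX^{-1}$ (using the SDEs for $\BX,\BY$ and the It\^o formula for the inverse), one reads off the martingale part $\L\deq\BZ\BX^{-1}-P(C+D\Th^*)$ and, after a routine computation, the drift
$$ dP = -\big[PA+A^\top P+C^\top PC+\L C+C^\top\L+Q+(B^\top P+D^\top PC+D^\top\L+S)^\top\Th^*\big]dt+\L\,dW, $$
while \rf{6-12-stationarity}, right-multiplied by $\BX^{-1}$, becomes $(R+D^\top PD)\Th^*+B^\top P+D^\top PC+D^\top\L+S=0$. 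Thus, once $R+D^\top PD$ is shown invertible, $\Th^*=-(R+D^\top PD)^{-1}(B^\top P+D^\top PC+D^\top\L+S)$ and substituting back turns the displayed equation into \rf{Ric}.

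The hard part is the pointwise positivity $R+D^\top PD\ges\l I_m$, and the source of difficulty is that $\Th^*=\BU\BX^{-1}$ is only known to be a.s.\ finite --- not bounded, nor even square-integrable. Using the drift identity above together with $P(T)=G$ and the stationarity relation, a completion-of-squares (It\^o's formula for $s\mapsto\lan P(s)X(s),X(s)\ran$ along the trajectory of \rf{state} with $X(\t)=0$) yields, for every $\dbF$-stopping time $\t$ and every $u\in\cU[\t,T]$,
$$ J(\t,0;u) = \dbE\int_\t^T\lan(R+D^\top PD)(u-\Th^*X),\,u-\Th^*X\ran ds. $$
Comparing with the uniform convexity $J(\t,0;u)\ges\d\,\dbE\int_\t^T|u|^2ds$ of \autoref{prop:t-tao} gives $\dbE\int_\t^T\lan(R+D^\top PD)(u-\Th^*X),u-\Th^*X\ran ds\ges\d\,\dbE\int_\t^T|u|^2ds$ for all such $u$. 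To pass to the pointwise statement I would argue by contradiction: if $R+D^\top PD\ges\d I_m$ failed on a set of positive $dt\otimes d\dbP$-measure, then (shrinking $\d$ slightly) on a suitable short interval $[t_h,t_h+h]$ intersected with $\{|\Th^*|\les N\}$ --- which, for $N$ large, still has positive measure --- I would take $u$ pointing along a direction on which $\lan(R+D^\top PD)v,v\ran\les(\d-\e)|v|^2$; since $\dbE\int_{t_h}^{t_h+h}|X|^2ds\les C_0h\,\dbE\int_{t_h}^{t_h+h}|u|^2ds$ by \autoref{lmm:sol-F&B-SDE}, the contribution of $\Th^*X$ is $o(\dbE\int|u|^2ds)$ as $h\downarrow0$ and the last displayed inequality is violated for $h$ small. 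Hence $R+D^\top PD\ges\d I_m$ a.e.\ on $[0,T]$, a.s.

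It then remains to record that $\L\in L_\dbF^2(0,T;\dbS^n)$ --- this follows from a standard a priori estimate for Riccati-type BSDEs whose first component is bounded and whose ``control weight'' $R+D^\top PD$ is uniformly positive (applying It\^o's formula to a convex exponential of $\lan P\zeta,\zeta\ran$ for unit vectors $\zeta$, together with a localization) --- so that $(P,\L)$ is indeed an adapted solution of \rf{Ric} satisfying \rf{7-28:uniconv}. Uniqueness is then immediate: any adapted solution $(P',\L')$ with $R+D^\top P'D$ uniformly positive satisfies, by the very same completion-of-squares identity, $\lan P'(t)\xi,\xi\ran=\h V(t,\xi)$ for every $(t,\xi)$, hence $P'=P$, and then $\L'=\L$ by matching martingale parts. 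The anticipated main obstacle is the positivity step together with the ancillary square-integrability of $\L$; the identification $\BY=P\BX$ and the drift computation, while somewhat lengthy, are essentially mechanical.
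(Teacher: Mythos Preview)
Your overall architecture matches the paper's: identify $P=\BY\BX^{-1}$, derive the BSDE \rf{6-15:dP=} and the relation \rf{6-16:relation} by It\^o's rule (this is exactly \autoref{lmm:pre-Ric}), then prove $R+D^\top PD\ges\d I_m$, then $\L\in L^2$. The first two steps are essentially the paper's Lemmas 6.2--6.3; your extra symmetry argument for $\BX^\top\BY$ is fine but unnecessary once you quote \autoref{thm:V=Pxx}.

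The genuine gap is in your positivity step. Your completion-of-squares identity
$$ J(\t,0;u)=\dbE\int_\t^T\blan(R+D^\top PD)(u-\Th^*X),u-\Th^*X\bran ds $$
is problematic on two counts. First, its derivation via It\^o's formula on $\lan PX,X\ran$ produces a stochastic integral involving $\L$, and you have no right to take expectations until you know $\L\in L^2$---which you are still trying to prove. Second, and more seriously, your short-interval contradiction ignores the tail $[t_h+h,T]$: with $u$ supported in $[t_h,t_h+h]$, the state $X$ does \emph{not} vanish afterwards, so the integrand $(R+D^\top PD)\Th^*X\cdot\Th^*X$ persists on $[t_h+h,T]$ where $\Th^*$ is completely uncontrolled (your cutoff $\{|\Th^*|\les N\}$ only acts on the support of $u$). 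That tail contribution is neither small nor of a definite sign, so the inequality cannot be violated as claimed. The estimate $\dbE\int_{t_h}^{t_h+h}|X|^2ds\les C_0h\,\dbE\int|u|^2ds$ handles the short interval but says nothing about the tail.

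The paper circumvents both issues by a different parametrization and a localization. Instead of fixing $u$ and writing $u-\Th^*X$, it fixes $v\in\cU[0,\t_k]$ and lets $X$ solve the \emph{closed-loop} system $dX=[(A+B\Th^*)X+Bv]ds+[(C+D\Th^*)X+Dv]dW$, $X(0)=0$, so that $u\deq\Th^*X+v$ satisfies $u-\Th^*X=v$ identically; then the identity reads $J^{\t_k}(0,0;u)=\dbE\int_0^{\t_k}\lan(R+D^\top PD)v,v\ran ds$ with $v$ arbitrary, and $R+D^\top PD\ges0$ follows at once. The stopping time $\t_k=\inf\{t:\int_0^t(|\Th^*|^2+|\L|^2)ds\ges k\}\wedge T$ makes the closed-loop system well-posed and the It\^o integral a true martingale; the stopped problem (SLQ)$^{\t_k}$ of \autoref{prop:SLQtau} supplies the convexity $J^{\t_k}(0,0;u)\ges0$. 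The upgrade from $\ges0$ to $\ges\d I_m$ is then obtained by a clean perturbation: replace $R$ by $R-\e I_m$ for $\e<\d$, note the new cost is still uniformly convex with constant $\d-\e$, apply the same argument to get $R-\e I_m+D^\top P_\e D\ges0$, and use $P_\e\les P$.

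For $\L\in L^2$, your appeal to an exponential transform is plausible but different from the paper, which applies It\^o's formula to $P$ and to $P^2$, uses von Neumann's trace inequality $\tr(-P\G)\les\l_{\max}(-P)\tr(\G)$ to control the quadratic nonlinearity, and closes with a Gronwall-type estimate. Your sketch would need considerably more detail to be convincing.
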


The proof of \autoref{thm:Riccati} proceeds through several lemmas.
As a preparation, we note first that by \autoref{prop:t-tao}, Problem (SLQ) is uniquely solvable
under the assumptions of \autoref{thm:Riccati}.
Let $(X_i,u_i)=\{(X_i(s),u_i(s));0\les s\les T\}$ be the unique optimal pair with respect to $(0,e_i)$,
and let $(Y_i,Z_i)=\{(Y_i(s),Z_i(s));0\les s\les T\}$ be the adapted solution to the adjoint BSDE
associated with $(X_i,u_i)$.
According to \autoref{prop:ux=Ux}, the $4$-tuple $(\BX,\BU,\BY,\BZ)$ defined by
\begin{equation}\label{7-29:notation}
  \BX=(X_1,\cdots,X_n), \q \BU=(u_1,\cdots,u_n), \q \BY=(Y_1,\cdots,Y_n), \q \BZ=(Z_1,\cdots,Z_n),
\end{equation}
satisfies the FBSDE
$$\left\{\begin{aligned}
   & d\BX(s)=(A\BX+B\BU)ds + (C\BX+D\BU)dW(s), \\
   & d\BY(s)=-\big(A^\top\BY+C^\top\BZ+Q\BX+S^\top\BU\big)ds+\BZ dW(s), \\
   & \BX(0)=I_n, \q \BY(T)=G\BX(T),
\end{aligned}\right.$$
and is such that
\bel{6-15-stationarity} B^\top\BY +D^\top\BZ +S\BX +R\BU=0, \q\ae~\hb{on}~[0,T],~\as \ee
Furthermore, \autoref{thm:BX-invertible} shows that the process $\BX=\{\BX(s);0\les s\les T\}$ is invertible,
and \textcolor{blue}{Theorems} \ref{thm:V=Pxx} and \ref{thm:P-continuity} imply that there exists a bounded,
left-continuous, $\dbF$-adapted process $P:[0,T]\times\Om\to\dbS^n$ such that \rf{V-property-1*} holds.

\begin{lemma} \sl
Under the assumptions of \autoref{thm:Riccati}, we have
\bel{Y=PX} P(t)=\BY(t)\BX(t)^{-1}, \q\forall t\in[0,T]. \ee
\end{lemma}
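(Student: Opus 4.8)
The plan is to pin down both $P(t)$ and $\BY(t)\BX(t)^{-1}$ by evaluating the quadratic form $\h V(t,\cd)$ in two ways, and then to use the invertibility of $\BX$ from \autoref{thm:BX-invertible} together with the symmetry of $\BX^\top\BY$ to conclude. Throughout, $(\BX,\BU,\BY,\BZ)$ are the matrix-valued processes built from the optimal data at $(0,e_i)$, satisfying the stated FBSDE and the stationarity condition \rf{6-15-stationarity}, and $P$ is the bounded, left-continuous process with $\h V(t,\xi)=\lan P(t)\xi,\xi\ran$; note the latter holds for all $\xi\in\cX_t$ since $P$ is bounded.

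First I would fix $t\in[0,T]$ and $x\in\dbR^n$. By \autoref{prop:ux=Ux} (applied at the initial time $0$), $(\BX x,\BU x)$ is optimal with respect to $(0,x)$ and $(\BY x,\BZ x)$ is the associated adjoint solution; for $t\in(0,T)$ the principle of optimality (\autoref{crlry:Bellman-Principle}) then makes $(\BX x,\BU x)|_{[t,T]}$ optimal with respect to $(t,\BX(t)x)$, with $(\BY x,\BZ x)|_{[t,T]}$ its adjoint by uniqueness of adapted solutions (the endpoints $t=0,T$ being immediate). \autoref{crlry:V=Y(t)X(t)} gives $\h V(t,\BX(t)x)=\lan\BY(t)x,\BX(t)x\ran$ a.s., while $\h V(t,\BX(t)x)=\lan P(t)\BX(t)x,\BX(t)x\ran$ a.s. by the representation \rf{V-property-1*} extended to $\cX_t$. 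Hence $\blan\BX(t)^\top(P(t)\BX(t)-\BY(t))x,x\bran=0$ a.s. for every $x\in\dbR^n$; letting $x$ range over a countable dense subset of $\dbR^n$ and using continuity in $x$, the $\cF_t$-measurable matrix $\BX(t)^\top(P(t)\BX(t)-\BY(t))$ is a.s. antisymmetric.

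Next I would show $\BX(t)^\top\BY(t)$ is symmetric for every $t$. Applying It\^o's formula to $\BX^\top\BY-\BY^\top\BX$ and using the dynamics of $(\BX,\BU,\BY,\BZ)$, the stationarity condition \rf{6-15-stationarity}, and the symmetry of $Q(\cd)$ and $R(\cd)$, one checks that the drift of $d(\BX^\top\BY)$ equals $-\big[\BX^\top Q\BX+\BX^\top S^\top\BU+\BU^\top S\BX+\BU^\top R\BU\big]$, which is symmetric; so $\BX^\top\BY-\BY^\top\BX$ is a continuous local martingale on $[0,T]$. Its terminal value is $\BX(T)^\top G\BX(T)-\BX(T)^\top G^\top\BX(T)=0$ because $G$ is symmetric. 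Since $\BX,\BY\in L_\dbF^2(\Om;C([0,T];\dbR^{n\times n}))$ by \autoref{lmm:sol-F&B-SDE}, this local martingale is dominated by the integrable random variable $\sup_{0\les s\les T}|\BX(s)|^2+\sup_{0\les s\les T}|\BY(s)|^2$, hence is a uniformly integrable martingale and therefore identically equal to its terminal value $0$. Thus $\BX(t)^\top\BY(t)=\BY(t)^\top\BX(t)$ for all $t$, a.s.

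Finally I would combine the two facts: $\BX(t)^\top P(t)\BX(t)$ is symmetric because $P(t)\in\dbS^n$, and $\BX(t)^\top\BY(t)$ is symmetric by the previous step, so $\BX(t)^\top(P(t)\BX(t)-\BY(t))$ is symmetric; being also antisymmetric, it vanishes. Invertibility of $\BX(t)$ (\autoref{thm:BX-invertible}) then gives $P(t)\BX(t)=\BY(t)$, i.e. $P(t)=\BY(t)\BX(t)^{-1}$, for all $t\in[0,T]$. The main obstacle is the symmetry of $\BX(t)^\top\BY(t)$: without it the vanishing-quadratic-form identity only matches symmetric parts, and the It\^o step needs the $L^2$-bounds on $\sup|\BX|$ and $\sup|\BY|$ to upgrade the local martingale to a genuine one.
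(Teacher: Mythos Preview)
Your proof is correct and follows essentially the same route as the paper: evaluate $\h V(t,\BX(t)x)$ two ways via \autoref{crlry:V=Y(t)X(t)} and \rf{V-property-1*}, obtain $x^\top\BX(t)^\top P(t)\BX(t)x=x^\top\BX(t)^\top\BY(t)x$ for all $x$, and then use invertibility of $\BX$. The only difference is that the paper passes directly from the quadratic-form identity to $\BX(t)^\top P(t)\BX(t)=\BX(t)^\top\BY(t)$ without isolating the symmetry issue, whereas you explicitly prove $\BX(t)^\top\BY(t)\in\dbS^n$ by showing $\BX^\top\BY-\BY^\top\BX$ is a uniformly integrable martingale with terminal value $0$; your extra step makes the passage from equal quadratic forms to equal matrices fully rigorous.
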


\begin{proof}
Let $x\in\dbR^n$ be arbitrary and set
\begin{align*}
   (X^*,u^*) &= \{(\BX(s)x,\BU(s)x);0\les s\les T\}, \\
   (Y^*,Z^*) &= \{(\BY(s)x,\BZ(s)x);0\les s\les T\}.
\end{align*}
From \autoref{prop:ux=Ux} we see that $(X^*,u^*)$ is an optimal pair with respect to $(0,x)$,
and that $(Y^*,Z^*)$ is the adapted solution to the adjoint BSDE associated with $(X^*,u^*)$.
For any $t\in[0,T]$, the principle of optimality (\autoref{crlry:Bellman-Principle}) shows that
the restriction $(X^*|_{[t,T]},u^*|_{[t,T]})$ of $(X^*,u^*)$ to $[t,T]$ remains optimal at $(t,X^*(t))$. Thus, we have by \autoref{crlry:V=Y(t)X(t)} that
$$\h V(t,X^*(t)) = \lan Y^*(t),X^*(t)\ran. $$
Because of \rf{V-property-1*}, the above yields
\begin{align*}
x^\top\BX(t)^\top P(t)\BX(t)x
  & =\lan P(t)\BX(t)x,\BX(t)x\ran =\lan P(t)X^*(t),X^*(t)\ran  =\lan Y^*(t),X^*(t)\ran \\
  & =\lan\BY(t)x,\BX(t)x\ran =x^\top\BX(t)^\top\BY(t)x.
\end{align*}
Since $x\in\dbR^n$ is arbitrary, we conclude that $\BX(t)^\top P(t)\BX(t)=\BX(t)^\top\BY(t)$.
The desired result then follows from the fact that $\BX$ is invertible.
\end{proof}

\begin{lemma}\label{lmm:pre-Ric} \sl
With the assumptions of \autoref{thm:Riccati} and the notation
\bel{def-Pi-L}\begin{aligned}
  \Th(t) &=\BU(t)\BX(t)^{-1}, \q \Pi(t)=\BZ(t)\BX(t)^{-1},\\
   \L(t) &=\Pi(t)-P(t)[C(t)+D(t)\Th(t)]; \q 0\les t\les T,
\end{aligned}\ee
the pair $(P,\L)$ satisfies the following BSDE:
\bel{6-15:dP=}\left\{\begin{aligned}
  dP(t) &=\big[-PA-A^\top P-C^\top PC-\L C-C^\top\L -Q \\
        &\hp{=\big[} -(PB+C^\top PD+\L D+S^\top)\Th\big]dt +\L dW(t), \q t\in[0,T], \\
   P(T) &=G.
\end{aligned}\right.\ee
Moreover, $\L=\L^\top$ and the following relation holds:
\bel{6-16:relation} B^\top P +D^\top PC +D^\top\L +S +(R+D^\top PD)\Th =0, \q\ae~\hb{on}~[0,T],~\as \ee
\end{lemma}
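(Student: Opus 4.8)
The plan is to derive all three assertions by matrix It\^o calculus applied to the process $\BY\BX^{-1}$, which is legitimate because $\BX$ is invertible by \autoref{thm:BX-invertible}. Since $\BX$ has continuous paths that never leave the open set of invertible $n\times n$ matrices, $\BX^{-1}$ is a continuous semimartingale and, as the restriction of a continuous map to the compact interval $[0,T]$, is almost surely bounded on $[0,T]$; hence $\Th$, $\Pi$, and $\L$ are well-defined $\dbF$-adapted processes with $\int_0^T(|\Th|^2+|\Pi|^2+|\L|^2)dt<\i$ a.s.\ (their membership in $L^2_\dbF$ is taken up in the later lemmas). Writing $\h A=A+B\Th$ and $\h C=C+D\Th$ for the closed-loop coefficients, the identity $\BU=\Th\BX$ turns the forward equation for $\BX$ into $d\BX=\h A\BX\,dt+\h C\BX\,dW$. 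The relation \rf{6-16:relation} is the quickest: multiplying the stationarity identity \rf{6-15-stationarity} on the right by $\BX^{-1}$ and using $\BU\BX^{-1}=\Th$, $\BY\BX^{-1}=P$, $\BZ\BX^{-1}=\Pi$ gives $B^\top P+D^\top\Pi+S+R\Th=0$; substituting $\Pi=\L+P(C+D\Th)$ from \rf{def-Pi-L} and collecting the terms multiplying $\Th$ yields \rf{6-16:relation}.

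For the backward equation \rf{6-15:dP=}, I would first apply It\^o's formula to the smooth map $M\mapsto M^{-1}$ to get $d(\BX^{-1})=\BX^{-1}(\h C^2-\h A)\,dt-\BX^{-1}\h C\,dW$, and then apply the It\^o product rule to $P=\BY\BX^{-1}$, using the backward equation for $\BY$ together with $\BY\BX^{-1}=P$, $\BZ\BX^{-1}=\Pi$, $\BU\BX^{-1}=\Th$; the cross-variation term contributes $-\BZ\BX^{-1}\h C\,dt=-\Pi\h C\,dt$, so that
$$ dP=\big[-(A^\top P+C^\top\Pi+Q+S^\top\Th)+P\h C^2-P\h A-\Pi\h C\big]dt+(\Pi-P\h C)\,dW. $$
The $dW$-coefficient is $\L$ by the definition in \rf{def-Pi-L}. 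For the drift, substituting $\Pi=\L+P\h C$ collapses $P\h C^2-\Pi\h C$ to $-\L\h C$ and turns $C^\top\Pi$ into $C^\top\L+C^\top PC+C^\top PD\Th$; expanding $\h A$ and $\h C$ then reproduces, term by term, the drift in \rf{6-15:dP=}. The terminal condition is immediate: $P(T)=\BY(T)\BX(T)^{-1}=G$.

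To see $\L=\L^\top$, note that $P$ is $\dbS^n$-valued by \autoref{thm:V=Pxx}, so $P=P^\top$ identically; transposing the decomposition just obtained exhibits $P^\top$ as a continuous semimartingale with martingale part $\int_0^{\cdot}\L^\top dW$, and since the decomposition of a continuous semimartingale into a finite-variation part and a local martingale is unique, the two martingale parts coincide, forcing $\L=\L^\top$, $dt\otimes d\dbP$-a.e. One could equivalently observe that $\BX^\top\BY=\BX^\top P\BX$ is symmetric and match the diffusion coefficients in its It\^o expansion, using the stationarity relation, but the uniqueness argument is shorter.

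I expect the main obstacle to be not conceptual but the density of the matrix It\^o bookkeeping in the second step, together with the need to justify that It\^o's formula for $M\mapsto M^{-1}$ genuinely applies here: this rests on \autoref{thm:BX-invertible} (the path of $\BX$ stays among the invertible matrices) and on the almost sure boundedness of the continuous process $\BX^{-1}$ on $[0,T]$, which is precisely what makes all the stochastic integrals above meaningful before one knows that $\L\in L^2_\dbF$.
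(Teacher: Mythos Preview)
Your proposal is correct and follows essentially the same approach as the paper: compute $d(\BX^{-1})$ via It\^o's formula (the paper does this by writing $d\BX^{-1}=\Xi\,ds+\D\,dW$ and solving $d(\BX\BX^{-1})=0$, which is equivalent to your application of $M\mapsto M^{-1}$), then apply the It\^o product rule to $P=\BY\BX^{-1}$ using the preceding lemma, read off $\L$ as the diffusion coefficient, and obtain \rf{6-16:relation} by right-multiplying the stationarity condition \rf{6-15-stationarity} by $\BX^{-1}$. The only cosmetic differences are that the paper derives $P(T)=G$ from the value-flow representation rather than from $\BY(T)=G\BX(T)$, and phrases the symmetry of $\L$ as ``comparing diffusion coefficients of the SDEs for $P$ and $P^\top$'' rather than invoking uniqueness of the semimartingale decomposition---but these are the same argument.
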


\begin{proof}
First of all, from \rf{V-property-1*} we see that
$$\lan G\xi,\xi\ran=\h V(T,\xi)=\lan P(T)\xi,\xi\ran, \q\forall\xi\in L^\i_{\cF_T}(\Om;\dbR^n),$$
which leads to $P(T)=G$. Since $\BX=\{\BX(s);0\les s\les T\}$ satisfies the SDE \rf{SDE-X} and is invertible,
It\^o's formula implies that its inverse $\BX^{-1}$ also satisfies a certain SDE. Suppose that
$$d\BX(s)^{-1}=\Xi(s)ds+\D(s)dW(s), \q s\in[0,T],$$
for some progressively processes $\{\Xi(s);0\les s\les T\}$ and $\{\D(s);0\les s\les T\}$.
Then by It\^o's formula and using \rf{SDE-X} and \rf{def-Pi-L}, we have
\begin{align*}
0 &= d\big(\BX\BX^{-1}\big) = \big[(A\BX+B\BU)\BX^{-1} + \BX\Xi + (C\BX+D\BU)\D\big]ds \\
  &\hp{= d\big(\BX\BX^{-1}\big) =~} +\big[(C\BX+D\BU)\BX^{-1} + \BX\D\big]dW(s) \\
  &=[A+B\Th + \BX\Xi + (C\BX+D\BU)\D]ds + (C+D\Th+\BX\D)dW(s).
\end{align*}
Thus, it is necessary that $\D=-\BX^{-1}(C+D\Th)$ and
\begin{align*}
  \Xi &= -\BX^{-1}[A+B\Th + (C\BX+D\BU)\D] \\
      &= -\BX^{-1}[A+B\Th-C(C+D\Th)-D\Th(C+D\Th)]  \\
      &= \BX^{-1}\big[(C+D\Th)^2-A-B\Th\big].
\end{align*}
Applying It\^{o}'s formula to the right-hand side of \rf{Y=PX} and then substituting for $\Xi$ and $\D$, we have
\begin{eqnarray*}
dP \3n&=&\3n -\,(A^\top\BY+C^\top\BZ+Q\BX +S^\top\BU)\BX^{-1}dt+\BZ\BX^{-1}dW+\,\BY\Xi dt + \BY\D dW + \BZ\D dt \nn\\
   \3n&=&\3n \big\{-A^\top P-C^\top\Pi-Q-S^\top\Th+P[(C+D\Th)^2-A-B\Th]-\,\Pi(C+D\Th)\big\}dt \\
   \3n&~&\3n  +[\Pi-P(C+D\Th)]dW \\
   \3n&=&\3n \big[-PA-A^\top P-C^\top PC-\L C-C^\top\L -Q-\,(PB+C^\top PD+\L D+S^\top)\Th\big]dt+\L dW.
\end{eqnarray*}
Recall that the process $P$ is symmetric; i.e., $P=P^\top$.
By comparing the diffusion coefficients of the SDEs for $P$ and $P^\top$, we conclude that
$$\L(t)=\L(t)^\top; \q 0\les t\les T.$$
Furthermore, \rf{def-Pi-L} and \rf{6-15-stationarity} imply that
\begin{align*}
  & B^\top P +D^\top PC +D^\top\L +S +(R+D^\top PD)\Th = B^\top P +D^\top\Pi +S +R\Th \\
  &\q= \big(B^\top\BY +D^\top\BZ +S\BX +R\BU\big)\BX^{-1} =0.
\end{align*}
The proof is completed.
\end{proof}

\begin{lemma}\label{lmm:R+DPD>I} \sl
Under the assumptions of \autoref{thm:Riccati}, we have
\bel{6-15:R+DPD>} R+D^\top PD \ges \d I_m, \q\ae~\hb{on}~[0,T],~\as \ee
\end{lemma}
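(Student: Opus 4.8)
The plan is to derive, for a suitably \emph{stopped} version of the problem, a completion-of-squares identity that expresses its cost functional through $R+D^\top PD$, then to feed that identity a control engineered so that its ``feedback-corrected'' argument equals an arbitrary prescribed process, and finally to shrink the stopping horizon to zero so that the state contribution disappears and Lebesgue differentiation yields a pointwise bound. A running localization is unavoidable because $\Th$, $\L$ and $\BX^{-1}$ are, at this stage, only square-integrable. Concretely, for $N\ges1$ I would set
$$\t_N=\inf\Big\{s\in[0,T]:\int_0^s\big(|\Th(r)|^2+|\L(r)|^2\big)dr\ges N\ \hb{ or }\ |\BX(s)|+|\BX(s)^{-1}|\ges N\Big\}\wedge T,$$
so that $\t_N\uparrow T$ a.s.; for $0\les t_0<T$ and small $h>0$ put $\si=\big((t_0+h)\wedge\t_N\big)\vee t_0$. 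Applying It\^o's formula to $s\mapsto\lan P(s)X(s),X(s)\ran$ on $[t_0,\si]$, and using the backward equation \rf{6-15:dP=} for $(P,\L)$ together with the relation \rf{6-16:relation} (equivalently $PB+C^\top PD+\L D+S^\top=-\Th^\top(R+D^\top PD)$), all terms quadratic in $X$ should cancel, leaving---once the stochastic term is seen to be a true martingale on $[t_0,\si]$, which is where the localization is used---the identity
$$J^{\si}(t_0,0;u)=\dbE\int_{t_0}^{\si}\llan(R+D^\top PD)(u-\Th X),\,u-\Th X\rran ds,\qq u\in\cU[t_0,\si],$$
where $X$ is the state driven by $u$ from $X(t_0)=0$ and $J^{\si}$ is the cost of Problem {\rm(SLQ)}$^{\si}$ (whose terminal weight is $P(\si)$).

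Next, given an arbitrary bounded, $\dbF$-progressively measurable, $\dbR^m$-valued process $\eta$, I would solve on $[0,\t_N]$ the closed-loop equation
$$dX=\big[(A+B\Th)X+B\eta\big]ds+\big[(C+D\Th)X+D\eta\big]dW,\qq X(t_0)=0,$$
which is well posed by \autoref{lmm:sol-F&B-SDE} applied on $[0,\t_N]$ (where, by construction of $\t_N$, the coefficients $A+B\Th$ and $C+D\Th$ have essentially bounded time-integrals), and put $u\deq\eta+\Th X$; this $u$ lies in $\cU[t_0,\si]$ and satisfies $u-\Th X=\eta$. Substituting it into the identity above and invoking the uniform convexity of the stopped problem, $J^{\si}(t_0,0;u)\ges\d\,\dbE\int_{t_0}^{\si}|u|^2ds$ (\autoref{prop:SLQtau} (i)), gives
$$\dbE\int_{t_0}^{\si}\llan(R+D^\top PD)\eta,\eta\rran ds\ \ges\ \d\,\dbE\int_{t_0}^{\si}|\eta+\Th X|^2ds.$$

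Now I would let $h\downarrow0$. Since $X(t_0)=0$ and $\si-t_0\les h$, the moment estimates for the closed-loop SDE give $\dbE\big[\sup_{[t_0,\si]}|X|^2\big]=O(h)$, while $\dbE\int_{t_0}^{\si}|\Th|^2ds=\int_{t_0}^{t_0+h}\dbE\big[|\Th(s)|^2{\bf1}_{\{s<\t_N\}}\big]ds\to0$ as $h\downarrow0$ for a.e.\ $t_0$ (the integrand lies in $L^1([0,T])$, with total mass $\les N$); a Cauchy--Schwarz estimate then forces $\dbE\int_{t_0}^{\si}|\Th X|^2ds=o(h)$, so the right-hand side becomes $\d\,\dbE\int_{t_0}^{\si}|\eta|^2ds+o(h)$. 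Dividing by $h$ and differentiating the two integrands (each in $L^1([0,T])$ after multiplication by ${\bf1}_{\{s<\t_N\}}$) at a common Lebesgue point $t_0$, I obtain, for a.e.\ $t_0$,
$$\dbE\big[\llan(R(t_0)+D(t_0)^\top P(t_0)D(t_0))\eta(t_0),\eta(t_0)\rran{\bf1}_{\{t_0<\t_N\}}\big]\ \ges\ \d\,\dbE\big[|\eta(t_0)|^2{\bf1}_{\{t_0<\t_N\}}\big].$$
Finally, were $R+D^\top PD\ges\d I_m$ to fail on a subset of $\{(s,\om):s<\t_N(\om)\}$ of positive $ds\otimes d\dbP$-measure, a measurable selection would produce a bounded $\eta$ supported there with $\lan(R+D^\top PD)\eta,\eta\ran\les(\d-\e)|\eta|^2$, contradicting the last display; hence $R+D^\top PD\ges\d I_m$ a.e.\ on $[0,\t_N)$, a.s., and letting $N\to\i$ proves the lemma.

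The hard part is the localization bookkeeping of the first two steps: keeping the completion-of-squares martingale integrable, solving the closed-loop SDE with a merely square-integrable feedback $\Th$, and ensuring that $u=\eta+\Th X$ remains in $\cU[t_0,\si]$---all of which rest on the bounds $\int_0^{\t_N}(|\Th|^2+|\L|^2)\les N$ and $|\BX|+|\BX^{-1}|\les N$ on $[0,\t_N]$. It is equally essential to \emph{shrink} the horizon ($h\downarrow0$) rather than argue on a fixed interior interval: the state-excursion cost is nonnegative, so on a fixed interval it would only reinforce the left-hand side of the coercivity estimate and no pointwise lower bound on $R+D^\top PD$ could be read off; letting $h\downarrow0$ is precisely what annihilates that term. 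One must also dispose of the routine measurability points needed to run \autoref{prop:SLQtau} and the stopped identity through the random horizons $\si$ (the assertion being vacuous where $\t_N\les t_0$).
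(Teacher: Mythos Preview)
Your argument is correct, but it takes a substantially different route from the paper's. The paper also derives the completion-of-squares identity $J^{\t_k}(0,0;u)=\dbE\int_0^{\t_k}\lan(R+D^\top PD)v,v\ran ds$ with $v=u-\Th X$ on a localized horizon (your first two steps are essentially the same as its Steps~1--2), but it uses this only to conclude $R+D^\top PD\ges0$ from the bare convexity $J(0,0;\cd)\ges0$. To upgrade to the sharp lower bound $\d I_m$, the paper does \emph{not} shrink the horizon; instead it perturbs the cost, replacing $R$ by $R-\e I_m$ for $\e\in(0,\d)$. The perturbed problem still satisfies the uniform convexity hypothesis (with constant $\d-\e$), so the argument just run yields $(R-\e I_m)+D^\top P_\e D\ges0$ for the associated value process $P_\e$; since $J_\e\les J$ forces $P_\e\les P$, one gets $R+D^\top PD\ges\e I_m$, and letting $\e\uparrow\d$ finishes. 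This perturbation trick is a two-line addendum once the nonnegativity is known, and it completely sidesteps the machinery your approach needs: the $o(h)$ moment estimates on $\Th X$ (which in turn require $L^4$ state bounds), the Lebesgue differentiation at common points, and the measurable-selection contradiction. Your approach, by contrast, is more self-contained in that it never introduces an auxiliary SLQ problem or its value process $P_\e$, and it extracts the coercivity constant directly from \autoref{prop:SLQtau}~(i); but the price is a noticeably heavier analytic overhead.
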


\begin{proof}
The proof will be accomplished in several steps.

\ms

{\it Step 1:}
Let us temporarily assume that the processes $\Th=\{\Th(s);0\les s\les T\}$ and
$\L=\{\L(s);0\les s\les T\}$ defined by \rf{def-Pi-L} satisfy
\bel{Th-L-bounded}\esssup_{\om\in\Om}\int_0^T\[|\Th(s,\om)|^2+|\L(s,\om)|^2\]ds <\i.\ee
Take an arbitrary control $v\in\cU[0,T]$ and consider the SDE
\bel{7-27:X}\left\{\begin{aligned}
   dX(s) &=[(A+B\Th)X+Bv]ds +[(C+D\Th)X+Dv]dW(s), \q s\in[0,T],\\
    X(0) &=0.
\end{aligned}\right.\ee
By \autoref{lmm:sol-F&B-SDE}, the solution $X$ of \rf{7-27:X} belongs to the space
$L_\dbF^2(\Om;C([0,T];\dbR^n))$ and hence
\bel{u=UX+v} u\deq \Th X+v \in\cU[0,T]. \ee
Note that with the control defined by \rf{u=UX+v}, the solution to the state equation
$$\left\{\begin{aligned}
   dX(s) &=(AX+Bu)ds +(CX+Du)dW(s), \q s\in[0,T],\\
    X(0) &=0
\end{aligned}\right.$$
coincides with the solution $X$ to \rf{7-27:X}. Using \rf{6-15:dP=}, we obtain by It\^{o}'s rule that
\begin{align*}
d\lan PX,X\ran
   &= \big[-\lan QX,X\ran-\lan(PB+C^\top PD+\L D+S^\top)\Th X,X\ran \\
   &\hp{=\big[\1n} +2\lan(PB+C^\top PD+\L D)u,X\ran + \lan D^\top PDu,u\ran\big]ds \\
   &\hp{=\big[\1n} +\big[\lan\L X,X\ran+2\lan P(CX+Du),X\ran\big]dW,
\end{align*}
from which it follows that
\begin{align*}
\dbE\lan GX(T),X(T)\ran
   &= \dbE\int_0^T\[-\lan QX,X\ran-\lan(PB+C^\top PD+\L D+S^\top)\Th X,X\ran \\
   &\hp{=\dbE\int_0^T\[\ } +2\lan(PB+C^\top PD+\L D)u,X\ran + \lan D^\top PDu,u\ran\]ds.
\end{align*}
Substituting this into the cost functional yields
\begin{align*}
J(0,0;u)
   &= \dbE\int_0^T\[-\lan(PB+C^\top PD+\L D+S^\top)\Th X,X\ran \\
   &\hp{=\dbE\int_0^T\[\ } +2\lan(PB+C^\top PD+\L D+S^\top)u,X\ran + \lan (R+D^\top PD)u,u\ran\]ds.
\end{align*}
Using \rf{6-16:relation} and \rf{u=UX+v}, we can further obtain
$$ J(0,0;u) = \dbE\int_0^T\lan (R+D^\top PD)(u-\Th X),u-\Th X\ran ds
            = \dbE\int_0^T\lan (R+D^\top PD)v,v\ran ds. $$
Because by assumption, $J(0,0;u)\ges0$ for all $u\in\cU[0,T]$, we conclude from the last equation that
\bel{6-16:R+DPD>0} R+D^\top PD \ges 0, \q\ae~\hb{on}~[0,T],~\as \ee

\ss

{\it Step 2:}
We now prove that \rf{6-16:R+DPD>0} is still valid without the additional assumption \rf{Th-L-bounded}.
Here, the key idea is to employ a localization technique so that the preceding argument can be applied
to a certain stopped SLQ problem. More precisely, we define for each $k\ges1$ the stopping time
(with the convention $\inf\varnothing=\i$)
$$ \t_k = \inf\lt\{t\in[0,T]\bigm|\int_0^t\big(|\Th(s)|^2+|\L(s)|^2\big)ds\ges k\rt\}$$
and consider the corresponding Problem (SLQ)$^{\t_k}$.
Take an arbitrary control $v\in\cU[0,T]$ and consider the following SDE over $[0,\t_k]$:
\bel{X:tauk}\left\{\begin{aligned}
   dX(s) &=[(A+B\Th)X+Bv]ds +[(C+D\Th)X+Dv]dW(s), \q s\in[0,\t_k],\\
    X(0) &=0.
\end{aligned}\right.\ee
Since by the definition of $\t_k$,
$$\int_0^{\t_k}\[|\Th(s)|^2+|\L(s)|^2\]ds\les k,$$
we see from \autoref{lmm:sol-F&B-SDE} that the solution $X$ of \rf{X:tauk} belongs to the space
$L_\dbF^2(\Om;C([0,\t_k];\dbR^n))$ and hence
$$ u\deq \Th X+v \in\cU[0,\t_k]. $$
Then we may proceed as in Step $1$ to obtain
$$ J^{\t_k}(0,0;u) = \dbE\int_0^{\t_k}\lan (R+D^\top PD)v,v\ran ds. $$
Since by \autoref{prop:SLQtau} (i) $J^{\t_k}(0,0;u)\ges0$ for all $u\in\cU[0,\t_k]$ and $v\in\cU[0,T]$
is arbitrary, we conclude that
\bel{7-28:R+DPD} R+D^\top PD \ges 0, \q\ae~\hb{on}~[0,\t_k],~\as \ee
Because the processes $\BU=\{\BU(s);0\les s\les T\}$ and $\BZ=\{\BZ(s);0\les s\les T\}$ are square-integrable,
$\BX^{-1}=\{\BX(s)^{-1};0\les s\les T\}$ is continuous, and $P,C,D$ are bounded, we see from \rf{def-Pi-L} that
$$\int_0^T\[|\Th(s)|^2+|\L(s)|^2\]ds<\i, \q\as$$
This implies that $\lim_{k\to\i}\t_k=T$ almost surely.
Letting $k\to\i$ in \rf{7-28:R+DPD} then results in \rf{6-16:R+DPD>0}.

\ms

{\it Step 3:}
In order to obtain the stronger property \rf{6-15:R+DPD>}, we take an arbitrary but fixed $\e\in(0,\d)$
and consider the SLQ problem of minimizing
\begin{align*}
J_\e(t,\xi;u) &= J(t,\xi;u) -\e\dbE\int_t^T|u(s)|^2ds \\
              &= \dbE\lt[\lan GX(T),X(T)\ran
                 +\int_t^T\llan\1n\begin{pmatrix}Q(s)&\1nS(s)^\top \\ S(s)&\1nR(s)-\e I_m\end{pmatrix}\1n
                                  \begin{pmatrix}X(s) \\ u(s)\end{pmatrix}\1n,
                                  \begin{pmatrix}X(s) \\ u(s)\end{pmatrix}\1n\rran ds\rt]
\end{align*}
subject to the state equation \rf{state}. Clearly, with $\d$ replaced by $\d-\e$,
the assumptions of \autoref{thm:Riccati} still hold for the new cost functional $J_\e(t,\xi;u)$.
Thus, with $P_\e$ denoting the process such that
$$ V_\e(t,\xi) \deq \inf_{u\in\cU[t,T]} J_\e(t,\xi;u)= \lan P_\e(t)\xi,\xi\ran,
   \q\forall (t,\xi)\in[0,T]\times L^\i_{\cF_t}(\Om;\dbR^n), $$
we have by the previous argument that
$$ (R-\e I_m) +D^\top P_\e D \ges 0, \q\ae~\hb{on}~[0,T],~\as $$
Since by the definition of $J_\e(t,\xi;u)$,
$$ V(t,\xi) =\inf_{u\in\cU[t,T]} J(t,\xi;u)\ges \inf_{u\in\cU[t,T]} J_\e(t,\xi;u) =V_\e(t,\xi),
   \q\forall (t,\xi)\in[0,T]\times L^\i_{\cF_t}(\Om;\dbR^n),$$
we see that $P(t)\ges P_\e(t)$ for all $t\in[0,T]$ and hence
$$ R+D^\top P D \ges R +D^\top P_\e D \ges \e I_m, \q\ae~\hb{on}~[0,T],~\as $$
The property \rf{6-15:R+DPD>} therefore follows since $\e\in(0,\d)$ is arbitrary.
\end{proof}

In order to prove \autoref{thm:Riccati}, we also need the following lemma concerning the trace
of the product of two symmetric matrices; it is a special case of von Neumann's trace theorem
(see Horn and Johnson \cite[Theorem 7.4.1.1, page 458]{Horn-Johnson 2012}).

\begin{lemma}\label{lmm:algera} \sl
Let $\BA,\BB\in\dbS^n$ with $\BB$ being positive semi-definite. Then with $\l_{\max}(\BA)$ denoting
the largest eigenvalue of $\BA$, we have
$$\tr(\BA\BB)\les \l_{\max}(\BA)\cd\tr(\BB).$$
\end{lemma}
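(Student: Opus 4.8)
The plan is to reduce the inequality to the elementary fact that the trace of a product of two positive semi-definite symmetric matrices is nonnegative. First I would note that, by the spectral theorem, $\l_{\max}(\BA)I_n-\BA$ is positive semi-definite: its eigenvalues are exactly the numbers $\l_{\max}(\BA)-\l$ as $\l$ ranges over the eigenvalues of $\BA$, hence all nonnegative. Combined with the hypothesis $\BB\ges0$, this puts us precisely in the setting where the auxiliary fact applies.

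Next I would establish (or simply invoke) that $\tr(PQ)\ges0$ whenever $P,Q\in\dbS^n$ are both positive semi-definite. The shortest argument uses the positive semi-definite square root $P^{1/2}$ together with the cyclic property of the trace: $\tr(PQ)=\tr(P^{1/2}QP^{1/2})$, and $P^{1/2}QP^{1/2}=(P^{1/2})^\top QP^{1/2}$ is positive semi-definite because $Q\ges0$, so its trace is a sum of nonnegative eigenvalues.

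Applying this with $P=\l_{\max}(\BA)I_n-\BA$ and $Q=\BB$ gives $\tr\big((\l_{\max}(\BA)I_n-\BA)\BB\big)\ges0$; expanding by linearity of the trace and using $\tr(I_n\BB)=\tr(\BB)$ then yields $\l_{\max}(\BA)\tr(\BB)-\tr(\BA\BB)\ges0$, which is the assertion. An equally short alternative is to diagonalize $\BB=U\,\diag(\m_1,\dots,\m_n)\,U^\top$ with $\m_i\ges0$ and $U$ orthogonal, so that $\tr(\BA\BB)=\tr(U^\top\BA U\,\diag(\m_1,\dots,\m_n))=\sum_{i=1}^n\m_i\,u_i^\top\BA u_i$ with $u_i$ the $i$-th column of $U$, bound each Rayleigh quotient by $u_i^\top\BA u_i\les\l_{\max}(\BA)|u_i|^2=\l_{\max}(\BA)$, and sum. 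Since the statement is the indicated special case of von Neumann's trace inequality, there is no substantive obstacle here; the only point requiring a little care is that neither argument should secretly invoke $\BA\ges0$, which is not assumed, and indeed neither version does.
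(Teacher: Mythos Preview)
Your proof is correct. The paper does not actually prove this lemma; it simply records it as a special case of von Neumann's trace inequality and cites Horn and Johnson \cite[Theorem 7.4.1.1]{Horn-Johnson 2012}. Your argument---observing that $\l_{\max}(\BA)I_n-\BA\ges0$ and then using either the square-root/cyclic-trace trick or the diagonalization of $\BB$ together with the Rayleigh-quotient bound---is a clean, self-contained proof that avoids invoking the full von Neumann theorem. Both of your routes are standard and neither relies on $\BA\ges0$, so nothing is missing.
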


{\it Proof of \autoref{thm:Riccati}.}
We have seen from \autoref{lmm:pre-Ric} that the bounded process $P$ in \rf{V-property-1*} and the
processes defined by \rf{def-Pi-L} satisfy the BSDE \rf{6-15:dP=} and the relation \rf{6-16:relation}.
Further, \autoref{lmm:R+DPD>I} shows that
$$ R+D^\top PD \ges \d I_m, \q\ae~\hb{on}~[0,T],~\as $$
This, together with \rf{6-16:relation}, implies that
$$ \Th=-(R+D^\top PD)^{-1}(B^\top P +D^\top PC +D^\top\L +S), $$
which, substituted into \rf{6-15:dP=} yields \rf{Ric}.
It remains to prove that the process $\L$ is square-integrable. Set
\begin{align*}
\Si &= PA+A^\top P +C^\top PC +\L C+C^\top\L +Q, \\
\G  &= (PB +C^\top PD +\L D +S^\top)(R+D^\top PD)^{-1}(B^\top P +D^\top PC +D^\top\L +S).
\end{align*}
Because the matrix-valued processes $A,C,Q,P$ are all bounded and the process $\G$ is positive semi-definite,
we can choose a constant $K>0$ such that
\bel{Estimates}\left\{\begin{aligned}
& \tr[P(s)]+|P(s)|^2\les K,\\
& \tr[\Si(s)]\les K[1+|\L(s)|],\\
& \tr[P(s)\Si(s)]\les|P(s)||\Si(s)|\les K[1+|\L(s)|],\\
& \tr[-P(s)\G(s)]\les\l_{\max}[-P(s)]\tr[\G(s)]\les K\tr[\G(s)],
\end{aligned}\right.\ee
for Lebesgue-almost every $s$, $\dbP$-a.s. In the last inequality we have used \autoref{lmm:algera}.
In the sequel, we shall use the same letter $K$ to denote a generic positive constant whose value
might change from line to line.
Define for each $k\ges1$ the stopping time (with the convention $\inf\varnothing=\i$)
$$ \l_k = \inf\lt\{t\in[0,T];~\int_0^t|\L(s)|^2ds\ges k\rt\}.$$
Because the processes $\BU=\{\BU(s);0\les s\les T\}$ and $\BZ=\{\BZ(s);0\les s\les T\}$ are square-integrable,
$\BX^{-1}=\{\BX(s)^{-1};0\les s\les T\}$ is continuous, and $P,C,D$ are bounded,
we see from the definition \rf{def-Pi-L} of $\L$ that
$$\int_0^T|\L(s)|^2ds<\i, \q\as$$
This implies that $\lim_{k\to\i}\l_k=T$ almost surely. Since $P$ satisfies the SDE
$$dP(t)=\big[-\Si(t)+\G(t)\big]dt+\L(t)dW(t),$$
We have
\bel{6-19:P(s)=} P(t\wedge\l_k) = P(0) + \int_0^{t\wedge\l_k} \big[-\Si(s)+\G(s)\big] ds
                                       + \int_0^{t\wedge\l_k} \L(s)dW(s). \ee
Thanks to the definition of $\l_k$, the process
$$\lt\{\int_0^{t\wedge\l_k} \L(s)dW(s),\cF_t;0\les t\les T\rt\}
  =\lt\{\int_0^t \L(s){\bf1}_{\{s\les\l_k\}}dW(s),\cF_t;0\les t\les T\rt\} $$
is easily seen to be a matrix of square-integrable martingales, so taking expectations in \rf{6-19:P(s)=} gives
$$\dbE[P(t\wedge\l_k)]=P(0)+\dbE\int_0^{t\wedge\l_k} [-\Si(s)+\G(s)] ds. $$
This, together with \rf{Estimates}, implies that
\begin{align}\label{estimate-Gamma}
\dbE\int_0^{t\wedge\l_k} \tr[\G(s)]ds
   &=\dbE\,\tr[P(t\wedge\l_k)-P(0)] + \dbE\int_0^{t\wedge\l_k} \tr[\Si(s)]ds\les K\[1+\dbE\int_0^{t\wedge\l_k}|\L(s)|ds\].
\end{align}
On the other hand, we have by It\^{o}'s formula that
$$ d[P(t)]^2 = \big[P(-\Si+\G)+(-\Si+\G)P +\L^2\big] dt + (P\L+\L P)dW(t). $$
A similar argument based the definition of $\l_k$ shows that
$$\dbE [P(t\wedge\l_k)]^2=P(0)^2 + \dbE\int_0^{t\wedge\l_k}\Big\{P(s)[-\Si(s)+\G(s)]
                            +[-\Si(s)+\G(s)]P(s) +[\L(s)]^2\Big\} ds, $$
which, together with \rf{Estimates} and \rf{estimate-Gamma}, yields (recalling the Frobenius norm)
\begin{align}\label{estimate-G2}
& \dbE\int_0^{t\wedge\l_k} |\L(s)|^2ds = \tr\1n\[\dbE\int_0^{t\wedge\l_k} [\L(s)]^2ds\] \nn\\
&\q = \dbE|P(t\wedge\l_k)|^2-|P(0)|^2 + 2\dbE\int_0^{t\wedge\l_k}\tr[P(s)\Si(s)]ds
      +2\dbE\int_0^{t\wedge\l_k}\tr[-P(s)\G(s)]ds \nn\\
&\q \les K+K\dbE\int_0^{t\wedge\l_k}\[1+|\L(s)|\]ds +K\dbE\int_0^{t\wedge\l_k}\tr[\G(s)]ds\les K\[1+\dbE\int_0^{t\wedge\l_k}|\L(s)|ds\].
\end{align}
Furthermore, by the Cauchy-Schwarz inequality,
$$K\dbE\int_0^{t\wedge\l_k}|\L(s)|ds \les 2K^2 +{1\over2}\dbE\int_0^{t\wedge\l_k}|\L(s)|^2ds.$$
Combining this with \rf{estimate-G2} gives
$${1\over2}\dbE\int_0^{t\wedge\l_k} |\L(s)|^2ds \les K+2K^2. $$
Since the constant $K$ does not depend on $k$ and $t$, and $\lim_{k\to\i}\l_k=T$ almost surely,
we conclude that the process $\L$ is square-integrable by letting $k\to\i$ and then $t\uparrow T$.
$\hfill\qed$

\ms

We conclude this section with a proof of \autoref{thm:main}.

\medskip

{\it Proof of \autoref{thm:main}.} Suppose that \rf{convexity:uni} holds. Then Problem (SLQ) is uniquely solvable at any initial time $t<T$ according to \autoref{prop:t-tao}. In order to find the optimal control at any initial pair $(t,\xi)\in\cD$, it suffices to determine the optimal control $u_i=\{u_i(s);0\les s\les T\}$ at $(0,e_i)$ for each $i=1,\ldots,n$, since by \autoref{prop:ux=Ux} the optimal control $u^*_{t,\xi}$ at $(t,\xi)$ must be given by
$$u^*_{t,\xi}(s)=(u_1(s),\cdots,u_n(s))\xi; \q t\les s\les T.$$
With the notation \rf{7-29:notation}, we see from \autoref{thm:BX-invertible} that the process
$\BX=\{\BX(s);0\les s\les T\}$ is invertible. Therefore, finding the optimal controls $u_1,\cdots,u_n$
is equivalent to finding
$$\Th(s)=\BU(s)\BX(s)^{-1}; \q 0\les s\les T.$$
The latter can be accomplished by solving the SRE \rf{Ric}, whose solvability
is guaranteed by \autoref{thm:Riccati}. In fact, from the proof of \autoref{thm:Riccati} we can see that
$\Th$ is actually given by \rf{7-29:Th}.
Summarizing the above, we obtain the desired result.
$\hfill\qed$

\section{The Uniform Convexity of the Cost Functional}\label{sec:uniform-convexity}

In this section, we would like to present some sufficient conditions on the coefficients of the state equation and the weighting matrices of the cost functional that guarantee \rf{uni-convex*}. We first present the following result.

\begin{proposition} \sl Let {\rm\ref{(A1)}--\ref{(A2)}} hold. Then the mapping $u\mapsto J(t,0;u)$ is uniformly convex
for every $t\in[0,T)$ if either \rf{classical} or \rf{n=m} holds.
\end{proposition}

\begin{proof}
In the case that $S\equiv0$, we have
$$J(t,0;u) = \dbE\bigg\{\blan GX^{(u)}(T),X^{(u)}(T)\bran
             +\1n\int_t^T\[\blan Q(s)X^{(u)}(s),X^{(u)}(s)\bran+\lan R(s)u(s),u(s)\ran\] ds\bigg\},$$
where $X^{(u)}$ is the solution to the SDE
$$\left\{\begin{aligned}
   dX(s) &=[A(s)X(s)+B(s)u(s)]ds+[C(s)X(s)+D(s)u(s)]dW(s), \q s\in[t,T],\\
    X(t) &=0.
\end{aligned}\right.$$
If, in addition, condition \rf{classical} holds, then
$$ J(t,0;u) \ges \dbE\int_t^T\lan R(s)u(s),u(s)\ran ds \ges\d\,\dbE\int_t^T|u(s)|^2 ds,$$
which shows that the mapping $u\mapsto J(t,0;u)$ is uniformly convex.

\ms

Now, if, in addition to $S\equiv0$, condition \rf{n=m} holds, then
$$J(t,0;u) \ges \dbE\lan GX^{(u)}(T),X^{(u)}(T)\ran \ges \d\,\dbE|X^{(u)}(T)|^2.$$
Since $D(s)^\top D(s)\ges\d I_m$, $[D(s)^\top D(s)]^{-1}$ exists and is uniformly bounded.
Therefore, by \autoref{lmm:sol-F&B-SDE} the BSDE
\bel{Y:7-18}\left\{\begin{aligned}
   dY(s) &= \Big\{\big[A-B(D^\top D)^{-1}D^\top C\big]Y + B(D^\top D)^{-1}D^\top Z\Big\}ds + ZdW(s), \q s\in[t,T],\\
    Y(T) &= X^{(u)}(T)
\end{aligned}\right.\ee
admits a unique adapted solution $(Y,Z)\in L_\dbF^2(\Om;C([t,T];\dbR^n))\times L^2_\dbF(t,T;\dbR^n)$ satisfying
\bel{8-11:ineq} \dbE\lt[ \sup_{t\les s\les T}|Y(s)|^2 + \int_t^T|Z(s)|^2ds \rt]\les K\dbE|X^{(u)}(T)|^2 \ee
for some constant $K>0$ independent of $X^{(u)}(T)$. On the other hand, it is easy to verify that the adapted solution of \rf{Y:7-18} is given by
$$Y(s)=X^{(u)}(s), \q Z(s)=C(s)X^{(u)}(s)+D(s)u(s); \q s\in[t,T].$$
Note that for any $\a,\b\in\dbR$ and any $\e>0$,
$$\a^2=(\a+\b-\b)^2=(\a+\b)^2-2(\a+\b)\b+\b^2\les{1+\e\over\e}(\a+\b)^2
+(1+\e)\b^2,$$
which leads to
$$(\a+\b)^2\ges{\e\over1+\e}\a^2-\e\b^2.$$
Thus, using the condition $D(s)^\top D(s)\ges\d I_m$, we have
\begin{align*}
\dbE\int_t^T|Z(s)|^2ds
   &\ges{\e\over1+\e}\,\dbE\int_t^T|D(s)u(s)|^2ds -\e\,\dbE\int_t^T|C(s)X^{(u)}(s)|^2ds \\
   &\ges{\e\d\over1+\e}\,\dbE\int_t^T|u(s)|^2ds-\e\|C(\cd)\|_\i^2T\,\dbE\lt[\sup_{t\les s\les T}|X^{(u)}(s)|^2\rt],
\end{align*}
where
$$\|C(\cd)\|_\infty=\esssup_{(s,\om)\in[t,T]\times\Om}|C(s,\om)|.$$
It follows from \rf{8-11:ineq} that
\begin{align*}
K\dbE|X^{(u)}(T)|^2
   &\ges \dbE\lt[ \sup_{t\les s\les T}|X^{(u)}(s)|^2 + \int_t^T|Z(s)|^2ds \rt]\\
   &\ges{\e\d\over1+\e}\,\dbE\int_t^T|u(s)|^2ds + \big[1-\e\|C(\cd)\|_\infty^2T\big]\dbE\lt[\sup_{t\les s\les T}|X^{(u)}(s)|^2\rt] \\
   &\ges{\e\d\over1+\e}\,\dbE\int_t^T|u(s)|^2ds,
\end{align*}
provided $0<\e\les{1\over\|C(\cd)\|_\i^2T}$. Hence,
$$J(t,0;u)\ges \d\,\dbE|X^{(u)}(T)|^2 \ges{\e\d^2\over K(1+\e)}\dbE\int_t^T|u(s)|^2ds, \q\forall u\in\cU[t,T].$$
This completes the proof.  
\end{proof}

The above result shows that the cases discussed in \cite{Tang 2003,Kohlmann-Tang 2003} are special cases of the uniform convexity condition presented in this paper. The next result shows that there is a class of problems for which neither \rf{classical} nor \rf{n=m} holds,
but the mapping $u\mapsto J(t,0;u)$ is uniformly convex. Therefore, the case we are discussing in the current paper is strictly more general than those in \cite{Tang 2003,Kohlmann-Tang 2003}.

\medskip

Consider the case that $B(\cd)=0,C(\cd)=0,S(\cd)=0$. Let $\F=\{\F(s);0\les s\les T\}$ be the solution to the random ordinary differential equation
$$\left\{\begin{aligned}
   d\F(s) &= A(s)\F(s)ds, \q s\in[0,T], \\
    \F(0) &= I_n.
\end{aligned}\right.$$
Then $\F(s)$ is invertible for every $s\in[0,T]$, and since $A$ is a bounded process, both $\F(s)$ and $\F(s)^{-1}$ are bounded $\cF_s$-adapted matrix-valued random variables. Denote
$$\|\F(s)\|_\i=\esssup_{\om\in\Om}|\F(s,\om)|,\qq\|\F(s)^{-1}\|_\i
=\esssup_{\om\in\Om}|\F(s,\om)^{-1}|.$$
Furthermore, let $\l_G$ and $\l_{Q(s)}$ be the essential infimums
of the smallest eigenvalues of $G$ and $Q(s)$, respectively. Hence,
$$G\ges\l_GI_n,\qq Q(s)\ges\l_{Q(s)}I_n, \q\as,~\ae~s\in[0,T].$$

\begin{theorem}\label{thm:uniconvex-cndtn}
\sl Let {\rm\ref{(A1)}--\ref{(A2)}} hold. Suppose that $B(\cd)=0,C(\cd)=0,S(\cd)=0$. If
\begin{equation}\label{unitu-condition}
\left[{\l_G\over\|\F(T)^{-1}\|_\i^2}+\int_r^T{\l_{Q(s)}\over\|\F(s)^{-1}
\|_\i^2}ds\right]{1\over\|\F(r)\|_\i^2}
D(r)^\top D(r)+R(r)\ges\d I_m,\q\as,~\ae~r\in[0,T],
\end{equation}
for some $\d>0$, then the mapping $u\mapsto J(t,0;u)$ is uniformly convex for every $t\in[0,T)$.
\end{theorem}

Note that \rf{unitu-condition} allows $R(r)$ to be negative definite if $D^\top D$ is sufficiently positive definite, or, to be indefinite/partially negative definite (within a certain range) and $D^\top D$ is partially positive definite in an obvious sense. Therefore, it is possible that neither \rf{classical} nor \rf{n=m} holds.

\begin{proof}
Let $t\in[0,T)$ be fixed. Since $B(\cd)=0$ and $C(\cd)=0$, for each $u\in\cU[t,T]$,
the solution of the state equation \rf{state} with initial state $\xi=0$ is given by
$$X(s)=\F(s)\int_t^s\F(r)^{-1}D(r)u(r)dW(r),\q s\in[t,T].$$
For any $(n\times m)$ matrix $F$, from the inequalities
\begin{align*}
&|F| = |\F(s)^{-1}\F(s)F| \les |\F(s)^{-1}||\F(s)F| \les \|\F(s)^{-1}\|_\i|\F(s)F|, \\
&|F| = |\F(s)\F(s)^{-1}F| \les |\F(s)||\F(s)^{-1}F| \les \|\F(s)\|_\i|\F(s)^{-1}F|,
\end{align*}
we have
$$|\F(s)F|\ges{1\over\|\F(s)^{-1}\|_\i}|F|, \q |\F(s)^{-1}F|\ges {1\over\|\F(s)\|_\i}|F|.$$
Thus,
\begin{align*}
\dbE\lan GX(T),X(T)\ran
  &\ges \dbE\left[\l_G\bigg|\F(T)\int_t^T \F(r)^{-1}D(r)u(r)dW(r)\bigg|^2\right] \\
  &\ges \dbE\left[{\l_G\over\|\F(T)^{-1}\|_\i^2}\bigg|\int_t^T \F(r)^{-1}D(r)u(r)dW(r)\bigg|^2\right] \\
  &=    {\l_G\over\|\F(T)^{-1}\|_\i^2}\dbE\int_t^T |\F(r)^{-1}D(r)u(r)|^2 dr \\
  &\ges {\l_G\over\|\F(T)^{-1}\|_\i^2}\dbE\int_t^T {1\over\|\F(r)\|_\i^2}|D(r)u(r)|^2 dr,
\end{align*}
and similarly,
$$\dbE\lan Q(s)X(s),X(s)\ran \ges {\l_{Q(s)}\over\|\F(s)^{-1}\|_\i^2}\dbE\int_t^s {1\over\|\F(r)\|_\i^2}|D(r)u(r)|^2dr. $$
Using Fubini's theorem we obtain
$$\dbE\int_t^T\lan Q(s)X(s),X(s)\ran ds
\ges \dbE\int_t^T\left[\int_r^T{\l_{Q(s)}\over\|\F(s)^{-1}\|_\i^2}ds\right]{1\over\|\F(r)\|_\i^2}|D(r)u(r)|^2dr. $$
Therefore, denoting
$$H(r)= \left[{\l_G\over\|\F(T)^{-1}\|_\i^2}+\int_r^T{\l_{Q(s)}\over\|\F(s)^{-1}\|_\i^2}ds
\right]{1\over\|\F(r)\|_\i^2}D(r)^\top D(r)+R(r),$$
we have
\begin{align*}
J(t,0;u) &= \dbE\lan GX(T),X(T)\ran + \dbE\int_t^T\[\lan Q(s)X(s),X(s)\ran +\lan R(s)u(s),u(s)\ran\]ds \\
         &\ges \dbE\int_t^T \lan H(r)u(r),u(r)\ran dr.
\end{align*}
So the mapping $u\mapsto J(t,0;u)$ is uniformly convex when \rf{unitu-condition} holds for some $\d>0$.
\end{proof}

Although the above result gives a class of problems for which neither \rf{classical} nor \rf{n=m} holds, and the mapping $u\mapsto J(t,0;u)$ is uniformly convex, the imposed conditions seem to be a little too restrictive. In the rest of this section, we would like to explore the problem a little more.

\ms

Note that \autoref{thm:main} can be read as follows:
Under {\rm\ref{(A1)}--\ref{(A2)}}, if $u\mapsto J(0,0;u)$ is uniformly convex,
then there exists an $\dbF$-adapted $\dbS^n$-valued process $P$ such that
\bel{R+DPD>0} R(s)+D(s)^\top P(s)D(s)\ges\l I_m, \q \ae~s\in[0,T],~\as\ee
for some $\l>0$. From this, we see that the mapping $u\mapsto J(0,0;u)$ could never be uniformly convex if
\bel{R<0}R(s)\les0,\q D(s)=0, \q \ae~s\in[0,T],~\as\ee
Thus, a natural necessary condition for $u\mapsto J(0,0;u)$ to be uniformly convex is that \rf{R<0} fails.
Now, we provide the following general sufficient condition for the uniform convexity of $u\mapsto J(t,0;u)$.

\begin{theorem}\sl
Let {\rm\ref{(A1)}--\ref{(A2)}} hold. Let $t\in[0,T)$ and $Q_0\in L^\i_\dbF(t,T;\dbS^n)$ with
$$Q_0(s)>0, \q\ae~s\in[t,T],~\as$$
Let $(\Pi,\Si)\in L_\dbF^\i(\Om;C([t,T];\dbS^n))\times L_\dbF^2(t,T;\dbS^n)$ be the adapted solution
to the following {\it Lyapunov BSDE}:
\bel{Lyapunov}\left\{\begin{aligned}
d\Pi(s) &= -(\Pi A+A^\top\Pi+C^\top \Pi C+\Si C+C^\top\Si+Q-Q_0)ds+\Si dW(s),\q s\in[t,T],\\
 \Pi(T) &= G.
\end{aligned}\right.\ee
If for some $\d>0$,
\bel{R+DPD>0*}\begin{aligned}
R+D^\top\Pi D -(B^\top\Pi+D^\top \Pi C+D^\top\Si+S)Q_0^{-1}(\Pi B+C^\top\Pi D+\Si D+S^\top)\ges\d I_m,\\
\ae~\hb{on}~[t,T],~\as
\end{aligned}\ee
then $u\mapsto J(t,0;u)$ is uniformly convex.
\end{theorem}

\begin{proof}
For any bounded $u\in\cU[t,T]$, let $X_0=\{X_0(s);t\les s\les T\}$ be the state process
corresponding to $u$ and the initial state $\xi=0$. Denote
$$\G=-(\Pi A+A^\top\Pi+C^\top\Pi C+\Si C+C^\top\Si+Q-Q_0),$$
with $(\Pi,\Si)$ being the adapted solution to \rf{Lyapunov}.
By It\^o's formula, we have
$$d(\Pi X_0)=(\G X_0+\Pi AX_0+\Pi Bu+\Si CX_0+\Si Du)ds+(\Si X_0+\Pi CX_0+\Pi Du)dW,$$
and hence
\begin{align*}
d\lan \Pi X_0,X_0\ran &= \[\lan\G X_0+\Pi AX_0+\Pi Bu+\Si CX_0+\Si Du,X_0\ran+\lan\Pi X_0,AX_0+Bu\ran\\
&\hp{=\ } +\lan\Si X_0+\Pi CX_0+\Pi Du,CX_0+Du\ran\]ds\\
&\hp{=\ } +\[\lan\Si X_0+\Pi CX_0+\Pi Du,X_0\ran+\lan\Pi X_0,CX_0+Du\ran\]dW\\
&= \[\lan(\G+\Pi A+A^\top\Pi+C^\top \Pi C+\Si C+C^\top\Si)X_0,X_0\ran\\
&\hp{=\ } +2\lan(B^\top\Pi+D^\top\Pi C+D^\top\Si)X_0,u\ran+\lan D^\top \Pi Du,u\ran\]ds\\
&\hp{=\ } +\[\lan(\Si+\Pi C+C^\top\Pi)X_0,X_0\ran+2\lan D^\top \Pi X_0,u\ran\]dW\\
&= \[\lan(Q_0-Q)X_0,X_0\ran+2\lan(B^\top\Pi+D^\top \Pi C+D^\top\Si)X_0,u\ran+\lan D^\top\Pi Du,u\ran\]ds\\
&\hp{=\ } +\[\lan(\Si+\Pi C+C^\top\Pi)X_0,X_0\ran+2\lan D^\top \Pi X_0,u\ran\]dW.
\end{align*}
Taking expectations on both sides (possibly together with a localization argument) gives
\begin{align*}
\dbE\lan GX_0(T),X_0(T)\ran &= \dbE\int_t^T\[\lan(Q_0-Q)X_0,X_0\ran+2\lan(B^\top \Pi+D^\top\Pi C+D^\top\Si)X_0,u\ran\\
                            &\hp{= \dbE\int_t^T\[} +\lan D^\top\Pi Du,u\ran\]ds.
\end{align*}
Substituting the above into the cost functional, we obtain
\begin{align*}
J(t,0;u) &= \dbE\int_t^T\[\lan Q_0X_0,X_0\ran+2\lan(B^\top \Pi+D^\top\Pi C+D^\top\Si+S)X_0,u\ran+\lan(R+D^\top\Pi D)u,u\ran\]ds\\
&= \dbE\int_t^T\Big\{\big|Q_0^{1\over2}X_0+Q_0^{-{1\over2}}(\Pi B+C^\top\Pi D+\Si D+S^\top)u\big|^2\\
&\hp{=\ } +\blan\big[R+D^\top\Pi D-(B^\top\Pi+D^\top \Pi C+D^\top\Si+S)Q_0^{-1}(\Pi B+C^\top\Pi D+\Si D+S^\top)\big]u,u\bran\Big\}ds\\
&\ges\d\dbE\int_t^T|u(s)|^2ds.
\end{align*}
This proves our conclusion for bounded $u\in\cU[t,T]$.
The unbounded case follows immediately since bounded controls are dense in $\cU[t,T]$.
\end{proof}

The above result gives some compatibility conditions among the coefficients of the state equation
and the weighting matrices in the cost functional that ensure the uniform convexity of the cost
functional in $u$. Let us look at several special cases.
\begin{enumerate}[(i)]
\item Let $\l>0$ and $Q_0=\l I_n$. Then with $(\Pi_\l,\Si_\l)$ denoting the adapted solution
to the following Lyapunov BSDE:
$$\left\{\begin{aligned}
d\Pi_\l(s) &= -(\Pi_\l A+A^\top\Pi_\l+C^\top\Pi_\l C+\Si_\l C+C^\top\Si_\l+Q-\l I_n)ds+\Si_\l dW(s),\q s\in[t,T],\\
 \Pi_\l(T) &= G,
\end{aligned}\right.$$
the corresponding condition \rf{R+DPD>0*} reads
\begin{align*}
R+D^\top\Pi_\l D- \l^{-1}(B^\top\Pi_\l+D^\top\Pi_\l C+D^\top\Si_\l+S)(\Pi_\l B+C^\top\Pi_\l D+\Si_\l D+S^\top)\ges\d I_m,\\
\ae~s\in[t,T],~\as
\end{align*}

\item Let all the coefficients and weighting matrices be deterministic.
Then, $\Si\equiv0$ and \rf{Lyapunov} reads
$$\left\{\begin{aligned}
& \dot\Pi+\Pi A+A^\top\Pi+C^\top\Pi C+Q-Q_0=0,\q s\in[t,T],\\
& \Pi(T)=G,
\end{aligned}\right.$$
and condition \rf{R+DPD>0*} becomes
$$ R+D^\top\Pi D-(B^\top\Pi+D^\top \Pi C+S)Q_0^{-1}(\Pi B+C^\top\Pi D+S^\top)\ges\d I_m,\q\ae~s\in[t,T]. $$
This is new even for the deterministic case previously studied in the literature.
Further, with $Q_0=\l I_n$, the above become
$$\left\{\begin{aligned}
& \dot\Pi_\l+\Pi_\l A+A^\top\Pi_\l+C^\top\Pi_\l C+Q-\l I_n=0,\q s\in[t,T],\\
& \Pi_\l(T)=G,
\end{aligned}\right.$$
and
$$ R+D^\top\Pi_\l D- \l^{-1}(B^\top\Pi_\l+D^\top\Pi_\l C+S)(\Pi_\l B+C^\top\Pi_\l D+S^\top)\ges\d I_m,\q\ae~s\in[t,T]. $$

\item The coefficients are still random and $B=0$, $C=0$, $S=0$. Then \rf{Lyapunov} becomes
$$\left\{\begin{aligned}
d\Pi(s) &= -(\Pi A+A^\top\Pi+Q-Q_0)ds+\Si dW(s),\q s\in[t,T],\\
 \Pi(T) &= G,
\end{aligned}\right.$$
and \rf{R+DPD>0*} reads
$$ R+D^\top(\Pi-\Si Q_0^{-1}\Si) D\ges\d I_m,\q\ae~s\in[t,T],~\as $$
This is comparable with the result of \autoref{thm:uniconvex-cndtn}.
\end{enumerate}

\section{An Illustrative Example}\label{sec:example}

In this section, we present an illustrative example for which the cost functional is uniformly convex
and the associated stochastic Riccati equation admits a unique adapted solution $(P,\L)$ with $P$ being
not positive definite and with $\L$ being unbounded.

\begin{example}\rm

Let $\eta\in L^\i_{\cF_T}(\Om;\dbR)$ be a Malliavin differentiable random variable with
square-integrable Malliavin derivative $D_t\eta$. Then the Clark--Ocone formula implies that
$$\eta = \dbE\eta + \int_0^T\dbE[D_t\eta|\cF_t]dW(t).$$
Let $\m(t)$ be a right-continuous modification of $\dbE[\eta|\cF_t]$, and let
$\l(t)$ be a right-continuous modification of $\dbE[D_t\eta|\cF_t]$. Then
\bel{19-7-19} \m(t) = \dbE\eta + \int_0^t\l(s)dW(s), \q t\in[0,T].\ee
Consider the SLQ problem where the coefficients of the state equation are given by
$$ A(s)=\begin{pmatrix}0&1\\0&0\end{pmatrix},
\q B(s)=\begin{pmatrix}0\\0\end{pmatrix},
\q C(s)=\begin{pmatrix}0&0\\0&0\end{pmatrix},
\q D(s)=\begin{pmatrix}1\\2\end{pmatrix},$$
and the weighting matrices in the cost functional are given by
\begin{alignat*}{2}
& G = \begin{pmatrix}-(1+T^2)&T\\ T&1+T^2\end{pmatrix} + \eta\begin{pmatrix}4&-2\\-2&1\end{pmatrix},  \q~&& S(s) = (0,0),\\
& Q(s) = \begin{pmatrix}2s&s^2\\ s^2&-4s\end{pmatrix} + 4\m(s)\begin{pmatrix}0&-1\\-1&1\end{pmatrix}, \q~&& R(s)=-(1+s^2).
\end{alignat*}
For this SLQ problem, the associated stochastic Riccati equation reads
\bel{ex:P-eqn}\left\{\begin{aligned}
 dP(t) &= -\big[PA+A^\top P+Q-\L D(R+D^\top PD)^{-1}D^\top\L\big]dt+\L dW(t),\q t\in[0,T],\\
  P(T) &= G.
\end{aligned}\right.\ee

\ss

We claim that the adapted solution $(P,\L)$ of \rf{ex:P-eqn} is given by the following:
\begin{equation}\label{PL}
 P(t) = \begin{pmatrix}-(1+t^2)&t \\ t&1+t^2\end{pmatrix} + \m(t)\begin{pmatrix}4&-2\\-2&1\end{pmatrix}, \qq
\L(t) = \l(t)\begin{pmatrix}4&-2\\-2&1\end{pmatrix}.
\end{equation}
In fact, we have from \rf{19-7-19} and \rf{PL} that on the one hand
\begin{equation}\label{ex:dP}
dP(t) = \begin{pmatrix}-2t&1\\1&2t\end{pmatrix}dt + \l(t)\begin{pmatrix}4&-2\\-2&1\end{pmatrix}dW(t)
      = \begin{pmatrix}-2t&1\\1&2t\end{pmatrix}dt + \L(t)dW(t); \q
 P(T) = G.
\end{equation}
On the other hand, a direct computation shows that
\begin{align*}
 P(t)A(t)+A(t)^\top P(t)+Q(t) = \begin{pmatrix}2t&-1\\-1&-2t\end{pmatrix}, \q
           D(t)^\top P(t)D(t) = 3(1+t^2)+4t, \q
                    \L(t)D(t) = 0,
\end{align*}
which leads to
\begin{align}\label{R+DPD>2}
& R(t)+D(t)^\top P(t)D(t)=2(1+t^2)+4t\ges2, \\
& \L(t)D(t)[R(t)+D(t)^\top P(t)D(t)]^{-1}D(t)^\top\L(t)=0. \nn
\end{align}
Consequently,
\begin{equation}\label{ex:coe}
-\big[PA+A^\top P+Q-\L D(R+D^\top PD)^{-1}D^\top\L\big] = \begin{pmatrix}-2t&1\\1&2t\end{pmatrix}.
\end{equation}
Combining \rf{ex:dP} and \rf{ex:coe}, we see that $(P,\L)$ is the adapted solution to the Riccati
equation \rf{ex:P-eqn}.

\ms

From \rf{PL}, we see that if $\eta$ can be chosen so that
\bel{ex:eta-cndtn} 0<\esssup_{\om\in\Om}|\eta(\om)|< {1\over4} \q\hb{and}\q D_t\eta \hb{ is unbounded},\ee
then $\l(t)=\dbE[D_t\eta|\cF_t]$, and hence $\L(t)$, is an unbounded process, and
$$P(t) = \begin{pmatrix}4\m(t)-1-t^2&t-2\m(t) \\ t-2\m(t)&\m(t)+1+t^2\end{pmatrix}$$
is not positive definite.
There are many random variables that satisfy \rf{ex:eta-cndtn}. For example, we can take
$$\eta={1\over8}\sin[W(T)^2].$$
The Malliavin derivative of this $\eta$ is
$$D_t\eta={1\over4}W(T)\cos[W(T)^2],$$
which is clearly unbounded.

\ms

The cost functional of this SLQ problem is uniformly convex.
This can be seen by applying It\^o's formula to $s\mapsto\lan P(s)X(s),X(s)\ran$,
where $X$ is the solution to the state equation \rf{state} with initial pair $(0,0)$,
which, in our situation, reads
$$\left\{\begin{aligned}
   dX(s) &= A(s)X(s)ds + D(s)u(s)dW(s), \q s\in[0,T],\\
    X(0) &= 0.
\end{aligned}\right.$$
More precisely, by noting that $\L(t)D(t) =0$ and using It\^o's formula, we have
\begin{align*}
d\lan P(s)X(s),X(s)\ran &= [-\lan Q(s)X(s),X(s)\ran + \lan P(s)D(s)u(s),D(s)u(s)\ran]ds \\
                        &\hp{=\ } + \lan 2P(s)D(s)u(s)+\L(s)X(s),X(s)\ran dW(s).
\end{align*}
Thus, taking expectations on both sides (together with a localization argument) gives
$$\dbE\lan GX(T),X(T)\ran = \dbE\int_0^T[-\lan Q(s)X(s),X(s)\ran + \lan P(s)D(s)u(s),D(s)u(s)\ran]ds.$$
Substituting the above into the cost functional and using \rf{R+DPD>2}, we obtain
$$J(0,0;u) = \dbE\int_0^T\blan R(s)+D(s)^\top P(s)D(s)u(s),u(s)\bran ds \ges 2\dbE\int_0^T|u(s)|^2 ds.$$
This shows that the cost functional of this SLQ problem is uniformly convex.
\end{example}

\section{Concluding Remarks}\label{sec:remarks}

In this paper, for a stochastic linear-quadratic optimal control problem with random coefficients in which
the weighting matrices of the cost functional are allowed to be indefinite,
we showed that under the uniform convexity condition on the cost functional, the stochastic Riccati equation
admits a unique adapted solution which can be constructed by the open-loop optimal pair, together with its
adjoint equation.
Moreover, the open-loop optimal control admits a state feedback/closed-loop representation.
For simplicity, the Brownian motion under consideration is assumed to be one-dimensional.
In the case of a $d$-dimensional Brownian motion $W=\{(W_1(t),\ldots,W_d(t));0\les t<\i\}$,
the SLQ optimal control problem is to find a control $u^*\in\cU[t,T]$ such that the quadratic cost functional
\begin{equation*}
J(t,\xi;u) =\dbE\left[\lan GX(T),X(T)\ran
                +\int_t^T\llan\1n\begin{pmatrix}Q(s)&\1nS(s)^\top \\ S(s)&\1nR(s)\end{pmatrix}\1n
                                 \begin{pmatrix}X(s) \\ u(s)\end{pmatrix}\1n,
                                 \begin{pmatrix}X(s) \\ u(s)\end{pmatrix}\1n\rran ds\right]
\end{equation*}
is minimized subject to the following state equation:
$$\left\{\begin{aligned}
  dX(s) &= [A(s)X(s)+B(s)u(s)]ds +{\ds\sum^d_{i=1}}[C_i(s)X(s)+D_i(s)u(s)]dW_i(s), \q s\in[t,T],\\
   X(t) &= \xi,
\end{aligned}\right.$$
where the weighting matrices in the cost functional satisfy \ref{(A2)},
and the coefficients of the state equation satisfy the following assumption that is similar to \ref{(A1)}:
\begin{itemize}[leftmargin=1.07cm]
\item[{\bf\setword{(A1)$^\prime$}{(A1)*}}] The processes $A,C_i:[0,T]\times\Om\to\dbR^{n\times n}$ and
$B,D_i:[0,T]\times\Om\to\dbR^{n\times m}$ $(i=1,\ldots,d)$ are bounded and $\dbF$-progressively measurable.
\end{itemize}
In this case, the associated SRE becomes
\bel{Ric-d}\left\{\begin{aligned}
dP(t) &=-\Big\{PA+A^\top P +{\ds\sum^d_{i=1}}(C_i^\top PC_i +\L_i C_i+C_i^\top\L_i) +Q \\
      &\hp{=-\[} -\[PB +{\ds\sum^d_{i=1}}\big(C_i^\top P +\L_i\big)D_i +S^\top\]
                  \(R+{\ds\sum^d_{i=1}}D_i^\top PD_i\)^{-1} \\
      &\hp{=-\[} \times\[B^\top P +{\ds\sum^d_{i=1}}D_i^\top\big(PC_i +\L_i\big) +S\]\Big\}dt
                 +{\ds\sum^d_{i=1}}\L_i dW_i(t), \q t\in[0,T],\\
 P(T) &=G,
\end{aligned}\right.\ee
and the corresponding main result \autoref{thm:main} can be stated as follows.

\begin{theorem} \sl
Let {\rm\ref{(A1)*}} and {\rm\ref{(A2)}} hold. Suppose that there exists a constant $\d>0$ such that
$$J(0,0;u) \ges \d\,\dbE\int_0^T|u(s)|^2ds, \q\forall u\in \cU[0,T]. $$
Then Problem {\rm(SLQ)} is uniquely solvable and the SRE \rf{Ric-d} admits a unique adapted solution
$(P,\L)=(P,\L_1,\ldots,\L_d)$ such that
$$ R+{\ds\sum^d_{i=1}}D_i^\top PD_i \ges \l I_m, \q\ae~\hb{on}~[0,T],~\as $$
holds for some constant $\l>0$.
Moreover, the unique optimal control $u^*_{t,\xi}=\{u^*_{t,\xi}(s);t\les s\les T\}$ at $(t,\xi)\in\cS[0,T)\times L^\i_{\cF_t}(\Om;\dbR^n)$ admits the following linear state feedback representation:
$$u^*_{t,\xi}(s) = \Th(s)X^*(s); \q s\in[t,T],$$
where $\Th$ is defined by
$$ \Th = -\(R+{\ds\sum^d_{i=1}}D_i^\top PD_i\)^{-1}\[B^\top P +{\ds\sum^d_{i=1}}D_i^\top\big(PC_i+\L_i\big) +S\], $$
and $X^*=\{X^*(s);t\les s\les T\}$ is the solution the closed-loop system
$$\left\{\begin{aligned}
  dX^*(s) &=[A(s)+B(s)\Th(s)]X^*(s)ds +{\ds\sum^d_{i=1}}[C_i(s)+D_i(s)\Th(s)]X^*(s) dW_i(s), \q s\in[t,T],\\
   X^*(t) &=\xi.
\end{aligned}\right.$$
\end{theorem}

\bs

\bf Acknowledgement. \rm The authors would like to thank the anonymous referees for their critical and suggestive comments on the previous version of the paper.

\end{document}